\theoremstyle{plain}
\newtheorem{thm}{Theorem}[section]
\newtheorem*{thm*}{Theorem}
\newtheorem{lm}[thm]{Lemma}
\newtheorem{cor}[thm]{Corollary}
\newtheorem*{cor*}{Corollary}
\newtheorem{prop}[thm]{Proposition}
\newtheorem*{conj*}{Conjecture}
\theoremstyle{remark}
\newtheorem*{remark}{Remark}
\newtheorem*{thank}{Acknowledgments}
\theoremstyle{definition}
\newtheorem*{defn*}{Definition}
\newtheorem{defn}[thm]{Definition}
\newcommand{\nc}{\newcommand}
\newcommand{\beq}{\begin{equation}}
\newcommand{\eeq}{\end{equation}}
\newcommand{\bpmx}{\begin{pmatrix}}
\newcommand{\epmx}{\end{pmatrix}}
\newcommand{\bbmx}{\begin{bmatrix}}
\newcommand{\ebmx}{\end{bmatrix}}
\newcommand{\beqcd}[1]{\begin{equation*}\label{#1}\tag{#1}}
\newcommand{\eeqcd}{\end{equation*}}
\numberwithin{equation}{section}
\def\makeop#1{\expandafter\def\csname#1\endcsname
  {\mathop{\rm #1}\nolimits}\ignorespaces}
\def\Isom{\ul{\mathrm{Isom}}}
\def\Spec{\mathrm{Spec}\,}
\DeclareMathAlphabet{\mathpzc}{OT1}{pzc}{m}{it}
\DeclareSymbolFont{cyrletters}{OT2}{wncyr}{m}{n}
\DeclareMathSymbol{\SHA}{\mathalpha}{cyrletters}{"58}
\def\makebb#1{\expandafter\def
  \csname bb#1\endcsname{{\mathbb{#1}}}\ignorespaces}
\def\makebf#1{\expandafter\def\csname bf#1\endcsname{{\bf
      #1}}\ignorespaces}
\def\makegr#1{\expandafter\def
  \csname gr#1\endcsname{{\mathfrak{#1}}}\ignorespaces}
\def\makescr#1{\expandafter\def
  \csname scr#1\endcsname{{\EuScript{#1}}}\ignorespaces}
\def\makecal#1{\expandafter\def\csname cal#1\endcsname{{\mathcal
      #1}}\ignorespaces}
\def\doLetters#1{#1A #1B #1C #1D #1E #1F #1G #1H #1I #1J #1K #1L #1M
                 #1N #1O #1P #1Q #1R #1S #1T #1U #1V #1W #1X #1Y #1Z}
\def\doletters#1{#1a #1b #1c #1d #1e #1f #1g #1h #1i #1j #1k #1l #1m
                 #1n #1o #1p #1q #1r #1s #1t #1u #1v #1w #1x #1y #1z}
\def\Zbarp{\ol{\Z}_p}
\def\Zp{\Z_p}
\def\Qbar{\ol{\Q}}
\def\cA{{\mathcal A}}  
\def\cD{\mathcal D}
\def\cF{{\mathcal F}}  
\def\cK{{\mathcal K}}  
\def\cO{\mathcal O}
\def\cf{{\mathcal f}}
\def\cW{{\mathcal W}}
\def\cX{\mathcal X}
\def\cC{\mathcal C}
\def\cT{\mathcal T}
\def\cY{\mathcal Y}
\def\cU{\mathcal U}
\newcommand{\Z}{\mathbf Z}
\newcommand{\Q}{\mathbf Q}
\newcommand{\R}{\mathbf R}
\newcommand{\C}{\mathbf C}
\newcommand{\A}{\mathbf A}    
\newcommand{\F}{\mathbb F}
\def\ol{\overline}  \nc{\opp}{\mathrm{opp}} \nc{\ul}{\underline}
\newcommand{\MX}[4]{\begin{bmatrix}
{#1}& {#2}\\
{#3}&{#4}\end{bmatrix} }
\def\cf{\mbox{{\it cf.} }}
\def\Sg{{\varSigma}}  
\def\ndivide{\nmid}
\def\x{{\times}}
\def\iso{\simeq}
\def\bksl{\backslash}
\def\lam{\lambda}
\def\sg{\sigma}
\newcommand{\powerseries}[1]{\llbracket{#1}\rrbracket}
\title[On the non-triviality of the $p$-adic Abel-Jacobi image of generalised Heegner cycles]{On the non-triviality of the $p$-adic Abel-Jacobi image of generalised Heegner cycles modulo $p$,\\
 II: Shimura curves}
\author{Ashay A. Burungale}
\date{\today}
\address{ Department of Mathematics~\\UCLA ~ \\
Los Angeles, CA 90095-1555, USA
}
\email{ashayburungale@gmail.com}
\subjclass[2010]{Primary 19F27, 11G18, 11R23 Secondary 11F85}
\keywords{Shimura curves, modular forms, generalised Heegner cycles, $p$-adic Abel-Jacobi map, Hecke stability}
\def\padic{\text{$p$-adic }}
\begin{document}
\maketitle

\begin{abstract}
Generalised Heegner cycles are associated to a pair of an elliptic newform and a 
Hecke character over an imaginary quadratic extension $K/\Q$. 
The cycles live in a middle dimensional Chow group of 
a Kuga-Sato variety arising from an indefinite Shimura curve over the rationals 
and a self product of a CM abelian surface. 
Let $p$ be an odd prime split in $K/\Q$.
We prove the non-triviality 
of the $p$-adic Abel-Jacobi image of generalised Heegner cycles 
modulo $p$ over the $\Z_p$-anticylotomic extension of $K$.
The result implies the non-triviality of the generalised Heegner cycles in the top graded piece of the coniveau filtration on the Chow group 
and proves a higher weight analogue of Mazur's conjecture. 
In the case of two, the result provides a refinement of the results 
of Cornut-Vatsal and Aflalo-Nekov\'{a}\v{r} on the non-triviality of Heegner points over the $\Z_p$-anticylotomic extension of $K$.
\end{abstract}
\tableofcontents
\section{Introduction} 
\noindent 
When a pure motive over a number field is self-dual with root number $-1$, the Bloch-Beilinson conjecture implies the existence of 
a non-torsion homologically trivial cycle in the Chow realisation.  
For a prime $p$,  the Bloch-Kato conjecture implies the non-triviality of the $p$-adic étale Abel-Jacobi image of the cycle. 
A natural question is to further investigate the non-triviality of the $p$-adic Abel-Jacobi image of the cycle.\\
\\
An instructive set up arises from a self-dual Rankin-Selberg convolution 
of an elliptic Hecke eigenform and a theta series over an imaginary 
quadratic extension $K$ with root number $-1$. 
In this situation, a natural candidate for a non-torsion homologically trivial cycle is the generalised Heegner cycle. 
It lives in a middle dimensional Chow group of a fiber product of a Kuga-Sato variety arising from an indefinite Shimura curve
and a self product of a CM abelian surface. 
In the case of weight two, the cycles coincide with the Heegner points. 
Twists of the theta series by $p$-power order anticyclotomic characters of $K$ give rise to an Iwasawa theoretic family of generalised Heegner cycles. 
Under mild hypotheses, we prove the generic non-triviality of the $p$-adic Abel-Jacobi image of these cycles modulo $p$. 
In particular, this implies the generic non-triviality of the cycles in the top graded piece of the coniveau filtration
along the $\Z_{p}$-anticyclotomic extension of $K$.\\
\\
In the introduction, for simplicity we mostly restrict to the case of Heegner points.\\
\\
Let $p$ be an odd prime. We fix two embeddings $\iota_{\infty}\colon \Qbar \to \C$ and $\iota_{p}\colon \Qbar \to \C_p$. 
Let $v_p$ be the $p$-adic valuation induced by $\iota_p$ so that $v_p(p)=1$.\\
\\
Let $K/\Q$ be an imaginary quadratic extension as above and $\cO$ the ring of integers. 
As $K$ is a subfield of the complex numbers, we regard it as a subfield of the algebraic closure $\Qbar$ via the embedding $\iota_{\infty}$.
Let $c$ be the complex conjugation on $\C$ which induces the unique non-trivial element of $\Gal(K/\Q)$ via $\iota_{\infty}$. 
We assume the following:\\
\\
{(ord)}  \text{$p$ splits in $K$.}\\
\\
Let $\mathfrak{p}$ be a prime above $p$ in $K$ induced by the $p$-adic embedding $\iota_p$. 
For a positive integer $m$, let $H_{m}$ be the ring class field of $K$ with conductor $m$ and 
$\cO_{m} = \Z+m\cO$ the corresponding order. 
Let $H$ be the Hilbert class field.\\
\\
Let $N$ be a positive integer such that $p\ndivide N$. 
Let $f$ be an elliptic newform of weight $2$, level $\Gamma_{0}(N)$ and neben-character $\epsilon$. 
Let $N_{\epsilon}|N$ be the conductor of $\epsilon$. 
Let $E_f$ be the Hecke field of $f$ and $\cO_{E_f}$ the ring of integers. Let $\mathfrak{P}$ be a prime above $p$ in $E_f$ induced by the $p$-adic embedding $\iota_p$. 
Let $\rho_{f}:\Gal(\overline{\Q}/\Q)\rightarrow \GL_{2}(\cO_{E_{f,\mathfrak{P}}})$ be the corresponding $p$-adic Galois representation.\\
\\
We fix a decomposition $N=N^{+}N^{-}$ where $N^{+}$ (resp. $N^{-}$) is only divisible by split (resp. ramified or inert) primes in $K/\Q$. 
We assume the following hypotheses:\\
\\
{(H1)} The level $N$ is square-free and prime to the discriminant of $K$.\\
{(H2)} The number of primes dividing $N^{-}$ is positive and even.\\
{(H3)} The conductor $N_{\epsilon}$ divides $N^{+}$.\\
\\
As $K$ satisfies the Heegner hypothesis for $N^{+}$, the integer ring $\cO$ contains a cyclic ideal $\mathfrak{N^{+}}$ of norm $N^{+}$.\\
From now, we fix such an ideal $\mathfrak{N^{+}}$. Let $\mathfrak{N}_{\epsilon}|\mathfrak{N}^{+}$ be the unique ideal of norm $N_{\epsilon}$.\\
\\
Let ${\bf{N}}: {\bf{A}}_{\Q}^{\times}/\Q^{\times} \rightarrow \C^\times$ be the norm Hecke character over $\Q$ given by 
$$     
{\bf{N}}(x)=||x||.        
$$
Here $||\cdot||$ denotes the adelic norm. 
Let ${\bf{N}}_{K}:={\bf{N}}\circ N_{\Q}^{K}$ be the norm Hecke character over $K$ for the relative norm $N_{\Q}^{K}$.
For a Hecke character $\lam:{\bf{A}}_{K}^{\times}/K^{\times} \rightarrow \C^\times$ over $K$, let $\mathfrak{f}_\lam$ (resp. $\epsilon_\lam$) denote 
its conductor (resp. the restriction $\lam|_{{\bf{A}}_{\Q}^\times}$). 
We say that $\lam$ is central critical for $f$ if it is of infinity type $(j_{1},j_{2})$ with $j_{1}+j_{2}=2$ and $\epsilon_{\lam}=\epsilon {\bf{N}^{2}}$.\\
\\
Let $b$ be a positive integer prime to $N$. 
Let $\Sg_{cc}(b,\mathfrak{N^{+}},\epsilon)$ be the set of Hecke characters $\lam$ such that:\\
\\
{(C1)} $\lam$ is central critical for $f$,\\
{(C2)} $\mathfrak{f}_{\lam}=b \cdot \mathfrak{N}_\epsilon$.\\
\\
Let $\chi$ be a finite order Hecke character such that $\chi{\bf{ N_{K}}} \in \Sg_{cc}(b,\mathfrak{N}^{+},\epsilon)$. 
Let $E_{f,\chi}$ be the finite extension of $E_f$ obtained by adjoining the values of $\chi$.\\
\\
Let $B$ be an indefinite quaternion algebra over $\Q$ of conductor $N^{-}$. 
Let $Sh_{B}$ be the corresponding Shimura curve of level $\Gamma_{1}(N^{+})$ and $J_{B}$ the Jacobian of $Sh_{B}$. 
Let $f_{B}$ be a normalised Jacquet-Langlands transfer of $f$ to $Sh_{B}$ (\cf \S5.1). 
Let $B_{f}$ be the abelian variety associated to $f_{B}$ by the Eichler-Shimura correspondence and $T_{f} \subset E_{f}$ an order 
such that $B_{f}$ has $T_{f}$-endomorphisms. 
Let $\Phi_{f}:J_{B} \rightarrow B_f$ be the associated surjective morphism. 
Let $\omega_{f_{B}}$ be the differential form on $Sh_{B}$ corresponding to 
$f_{B}$. We use the same notation for the corresponding one form on $J_{B}$. 
Let $\omega_{B_{f}} \in \Omega^{1}(B_{f}/E_{f})^{T_f}$ be the unique one form such that $\Phi_{f}^{*}(\omega_{B_{f}})=\omega_{f_{B}}$. 
Here $\Omega^{1}(B_{f}/E_{f})^{T_{f}}$ denotes the subspace of $1$-forms given by 
$$
\Omega^{1}(B_{f}/E_{f})^{T_{f}}=\big{\{}\omega \in \Omega^{1}(B_{f}/E_{f})|[\lam]^{*} \omega = \lam \omega, \forall \lam \in T_{f} \big{\}}.
$$
\noindent\\
Let $A_{b}$ be an abelian surface with endomorphisms by $\cO_{b}=\Z+b\cO$, defined over the ring class field $H_b$. 
Let $\iota_{N^{+}}$ denote a level structure of $\Gamma_{1}(N^{+})$-type on $A_{b}[\mathfrak{N^{+}}]$. 
In view of the moduli interpretation of the Shimura curve (\cf \S2.2), 
we thus obtain a point $(A_{b}, \iota_{N^{+}}) \in Sh_{B}(H_{N^{+}b})$. 
Here we suppress additional data needed in the moduli interpretation for simplicity. 
Let $\Delta_{b}\in J_{B}(H_{N^{+}b})$ be the corresponding Heegner point on the modular Jacobian. 
Here we cohomologically trivialise the Heegner point as in Zhang's work, for example \cite{Z}.
We regard $\chi$ as a character $\chi: \Gal(H_{N^{+}b}/K) \rightarrow E_{f,\chi}$. 
Let $G_{b}=\Gal(H_{N^{+}b}/K)$. 
Let $H_{\chi}$ be the abelian extension of $K$ cut out by the character $\chi$.
To the pair $(f,\chi)$, we associate the Heegner point $P_{f}(\chi)$ given by 
\beq
P_{f}(\chi)= \sum_{\sg \in G_{b}} \chi^{-1}(\sg)\Phi_{f}(\Delta_{b}^{\sg}) \space \in B_{f}(H_{\chi}) \otimes_{T_f} E_{f,\chi}.
\eeq 
To consider the non-triviality of the Heegner points $P_{f}(\chi)$ as $\chi$ varies, we can consider the non-triviality of the 
corresponding $p$-adic formal group logarithm.
The restriction of the $p$-adic formal group logarithm arising from the one form $\omega_{B_{f}}$ gives a homomorphism 
$\log_{\omega_{B_{f}}}:  B_{f}(H_{\chi}) \rightarrow \C_p$. We extend it to $B_{f}(H_{\chi}) \otimes_{T_f} E_{f,\chi}$ by $E_{f,\chi}$-linearity.\\
\\
We now fix a finite order Hecke character $\eta$ such that $\eta {\bf{N_{K}}} \in \Sg_{cc}(1,\mathfrak{N^{+}},\epsilon)$. 
Let $H_{N^{+}p^{\infty}} = \bigcup_{n\geq 0} H_{N^{+}p^n}$ be the ring class field of conductor $N^{+}p^\infty$. 
Let $K_{\infty} \subset H_{N^{+}p^\infty}$ be the anticyclotomic $\Z_p$-extension of $K$.
In the above notation, we have $G_{p^{n}}=\Gal(H_{N^{+}p^n}/K)$.
Let $\Gamma$ be the $\Z_p$-quotient of $\varprojlim G_{p^{n}}$. 
Let $\mathfrak{X}_{0}$ be the subgroup of finite order characters of the group $\Gal(K_{\infty}/K) \iso \Z_p$.
Let $\mathfrak{X}$ denote the set of anticyclotomic Hecke characters over $K$ factoring through $\Gamma$. 
For $\nu \in \mathfrak{X}_{0}$, let $G(\nu)$ denote the Gauss sum associated to $\nu$ considered as a primitive character. 
We consider the non-triviality of $G(\nu^{-1})\log_{\omega_{B_{f}}}(P_{f}(\eta \nu))$ modulo $p$, as $\nu \in \mathfrak{X}_{0}$ varies. \\
\\
Our result is the following.\\
\\
\\
{{\bf{Theorem A}}}. 
Let the notation be as above. 
Let $f\in S_{2}(\Gamma_{0}(N),\epsilon)$ be an elliptic newform and $\eta$ a finite order unramified Hecke character over $K$ 
such that $\eta {\bf{N_{K}}} \in \Sg_{cc}(1,\mathfrak{N}^{+},\epsilon)$. 
In addition to the hypotheses (ord), (H1), (H2) and (H3), suppose that\\
\\
{(irr)} the residual Galois representation $\rho_{f}$ modulo $p$ is absolutely irreducible.\\
\\
Then, we have\\
$$\liminf_{\nu \in \mathfrak{X}_{0}}v_{p}\bigg{(} G(\nu^{-1})\log_{\omega_{B_{f}}}(P_{f}(\eta \nu))\bigg{)}=0.$$
\\
In particular,  
for $\nu \in \mathfrak{X}_{0}$ with sufficiently large $p$-power order the Heegner points $P_{f}(\eta\nu)$ are non-zero in $B_{f}(H_{\eta\nu}) \otimes_{T_f} E_{f,\eta\nu}$.\\
\\
\\
\\
In fact, we show that the same conclusions hold when $\mathfrak{X}_{0}$ is replaced by any of its infinite subset. 
For analogous non-triviality of the $p$-adic Abel-Jacobi image of generalised Heegner cycles modulo $p$, we refer to \S6.2.\\
\\
The proof of Theorem A is based on the vanishing of the $\mu$-invariant of an anticyclotomic Rankin-Selberg $p$-adic L-function and 
a recent $p$-adic Waldspurger formula due to Brooks.\\
\\
We now describe the result regarding the $\mu$-invariant. 
Associated to the pair $(f,\eta)$, an anticyclotomic Rankin-Selberg $p$-adic L-function $L_{p}(f,\eta) \in \overline{\Z}_{p}\powerseries{\Gamma}$ is constructed in \cite{Br}. 
It is characterised by the interpolation formula
\beq
\widehat{\lam}(L_{p}(f,\eta)) \doteq L(f,\eta\lam{\bf{N_{K}}},0).
\eeq 
Here $\lam$ is an unramified Hecke character over $K$ with infinity type $(m,-m)$ for $m\geq 0$ and $\widehat{\lam}$ its $p$-adic avatar. 
The notation ``$\doteq$" denotes that the equality holds up to well determined periods. 
Here we only mention that the periods crucially depend on the Jacquet-Langlands transfer $f_{B}$ and 
the underlying Shimura curve.\\
\\
Our result regarding the non-triviality of the $p$-adic L-function $L_{p}(f,\eta)$ is the following.\\
\\
\\
{{\bf{Theorem B}}}. 
Let the notation be as above. 
Let $f\in S_{2}(\Gamma_{0}(N),\epsilon)$ be an elliptic newform and $\eta$ a finite order unramified Hecke character over $K$ 
such that $\eta {\bf{N_{K}}} \in \Sg_{cc}(1,\mathfrak{N}^{+},\epsilon)$. 
Suppose that the hypotheses (ord), (H1), (H2), (H3) and (irr) hold. Then, we have
$$ \mu(L_{p}(f,\eta))=0.$$
\\
\\
We now describe the strategy of the proofs. Some of the notation used here is not followed in the rest of the article. 
The characters in $\mathfrak{X}_{0}$ are outside the range of interpolation for the $p$-adic L-function $L_{p}(f,\eta)$ and these values basically equal 
the $p$-adic formal group logarithm of Heegner points. This is based on the $p$-adic Waldspurger formula in \cite{Br}. 
A phenomena of this sort was first found by Rubin in the CM case (\cf \cite{R})  and 
recently by Bertolin-Darmon-Prasanna in the general case (\cf \cite{BDP1}). 
As $\mathfrak{X}_{0}$ is a dense subset of characters of $\Gamma$, 
the $p$-adic Waldspurger formula reduces Theorem A to Theorem B. 
We prove the later based on a strategy of Hida. 
This strategy was introduced in \cite{Hi3}. 
Hida proves the vanishing of the $\mu$-invariant of a class of anticyclotomic Katz $p$-adic L-functions in \cite{Hi3}. 
Let $G_{f,\eta} \in \Zbarp \powerseries{T}$ be the power series expansion of the 
measure $L_{p}(f,\eta)$ regarded as a $p$-adic measure on $\Z_p$ with support in $1+p\Z_p$, i.e. 
\beq 
G_{f,\eta}= \int_{1+p\Z_p} (1+t)^{y} dL_{p}(f,\eta)(y) = \sum_{k \in \Z_{\geq 0}} \Big{(}\int_{1+p\Z_p} \binom {y}{k} dL_{p}(f,\eta)(y) \Big{)} t^{k}. 
\eeq
The starting point is the fact that there are modular forms $(f_{i})_{i=1}^{m}$ on the Shimura curve $Sh_{B}$ such that
\beq 
G_{f,\eta}=\sum_{i=1}^{m} a_i\circ(f_{i}(t)). 
\eeq
Here $f_{i}(t)$ is the
$t$-expansion of $f_{i}$ around a well chosen CM point $x$ corresponding to the trivial ideal class in $\Pic(\cO)$
on the Shimura curve $Sh_{B}$. 
Moreover, $a_i$ is an automorphism of the deformation space of $x$ in $Sh_{B}$ such that 
the $a_{i}$'s are mutually irrational. 
The modular forms $f_{i}$'s are closely related to the Jacquet-Langlands transfer $f_{B}$. 
Based on Chai's study of Hecke-stable subvarieties of a Shimura variety, we prove the linear independence of $(a_i\circ f_{i})_{i=1}^{m}$
modulo $p$. 
The independence is an analogue of Ax-Lindemann-Weierstrass conjecture for the mod $p$ reduction of the Shimura curve $Sh_{B}$. 
The proof relies on Chai-Oort rigidity principle that a Hecke stable subvariety of a mod $p$ Shimura variety is a Shimura subvariety. 
The principle is a mod $p$ analogue of Andre-Oort conjecture for self products of the Shimura curve. 
An analogue of Ax-Lindemann-Weierstrass of this sort was originally found by Hida in the Hilbert modular case (\cf \cite{Hi3}). 
We closely follow Hida's approach. 
The assumption (irr) plays a key role in the independence as it implies the non-constancy of $f_{i}$'s modulo $p$. 
In view of the independence and (1.4), it follows that $\mu(L_{p}(f, \eta))=\min_{i} \mu(f_{i}(t))=\min_{i}\mu(f_{i})$. 
Based on our optimal choice of the Jacquet-Langlands transfer $f_{B}$ and the $p$-integrality criterion in \cite{Pr}, we deduce that $\min_{i}\mu(f_{i})=0$. 
This finishes the proof. 
We would like to emphasize our perception that the independence lies at the heart of the proof of Theorem B.\\
\\
``In particular" part of the Theorem A was conjectured by Mazur in the early 1980's (\cf \cite{M}). 
It was proven by Cornut-Vatsal and Aflalo-Nekov\'{a}\v{r} in the mid and late 2000's, respectively (\cf \cite{C}, \cite{V1}, \cite{CV1}, \cite{CV2} and \cite{AN}). 
We give a new approach and as far as we know the theorem is a first result regarding the non-triviality of the $p$-adic formal group logarithm of the 
Heegner points modulo $p$. 
It seems suggestive to compare our approach with the earlier approach.
In the earlier approach, Ratner's theorem on ergodicity of torus actions is fundamental. 
As indicated above, our approach fundamentally relies on Chai's theory of Hecke stable subvarieties of a mod $p$ Shimura variety. 
It is rather surprising that we have these quite different approaches for the same characteristic zero non-triviality. 
A speculation along these lines was expressed in \cite{V2}. 
It seems interesting that the ergodic nature of the earlier approach is still present in our approach albeit in a more geometric form. 
For a more detailed comparison, we refer to \cite{Bu4}.
Before the $p$-adic Waldspurger formula, 
the non-triviality and Hida's strategy appeared to be complementary. 
The formula also allows a rather smooth transition to the higher weight case.\\ 
\\
As far as we know,  
higher weight analogue of Theorem A is a first general result regarding the non-triviality of generalised Heegner cycles. 
In particular, the non-triviality implies that the Griffiths group of the fiber product of a Kuga-Sato variety arising from the Shimura curve
and self product of a CM abelian surface has infinite rank over $\overline{\Q}$ (\cf remark (2) following Theorem 6.2). 
An analogue of such a result for the Griffiths group of the Kuga-Sato variety is due to Besser (\cf \cite{Be}). 
In \cite{Be}, the approach is based on the generic non-triviality of classical Heegner cycles over a class of varying imaginary quadratic extensions. 
Regarding generalised Heegner cycles in the case of modular curves, the only earlier known result seems to be the non-triviality of several examples of such families in \cite{BDP3}. 
Based on the Cornut-Vatsal approach, Howard has proven a related characteristic zero non-triviality of classical Heegner cycles in the case of modular curves in \cite{Ho} and \cite{Ho1}.\\ 
\\
Theorem A and its higher weight analogue has various arithmetic applications. 
The non-triviality of the $p$-adic Abel-Jacobi image along the $\Z_p$-anticyclotomic extension is a typical hypothesis 
while working with an Euler system, for example \cite{N} and \cite{F}. 
The hypothesis typically plays a key role in bounding the size of relevant Selmer groups. 
Cornut-Vatsal's non-triviality results have been applied by Nekov\'{a}\v{r} in his proof of the parity conjecture 
for a class of Selmer groups (\cf \cite{N1}, \cite{N2}, \cite{N3} and 
references therein). 
It seems that the higher weight analogue of Theorem A can have potential applications to the parity conjecture 
for Selmer groups associated to a more general Rankin-Selberg convolution. 
As mentioned before, the non-triviality gives an evidence for the refined Bloch-Beilinson conjecture. 
Some other arithmetic consequences of  the ``In particular" part of Theorem A are well documented in the literature, for example \cite{C}, \cite{V1}, \cite{AN} and \cite{F}.\\
\\
Theorem B is perhaps of independent interest as well. 
It is an input in the ongoing work of Jetchev, Skinner and Wan on the $p$-adic Birch and Swinnerton-Dyer conjecture (\cf \cite{JSW}). 
The result in turn is expected to play a key role in the ongoing work of Bhargava, Skinner and Zhang on the Birch and Swinnerton-Dyer conjecture 
for a large proportion of elliptic curves over the rationals.\\ 
\\
Hida's strategy has been influential in the study of non-triviality of various arithmetic invariants modulo $p$. 
For example, the non-triviality results in \cite{Hi3}, \cite{BuHs}, \cite{Hs1}, \cite{Hs3}, \cite{Bu} and \cite{Bu2} are 
based on the Hilbert modular independence in \cite{Hi3}. 
The strategy and especially its geometric aspects have been further explored and refined in \cite{BuHi}.\\
\\
Recently, the $p$-adic Waldspurger formula has been generalised to modular forms on Shimura curves over a totally real field (\cf \cite{LZZ}). 
In \cite{Bu1}, we consider an analogue of the independence for quaternionic Shimura varieties over a totally real field. 
In the near future, we hope to consider an analogous 
non-triviality of generalised 
Heegner cycles over a CM field. 
In a forthcoming article \cite{Bu3}, we consider an $(l,p)$-analogue of the results. 
Such an analogue in the case of modular curves is considered in \cite{Bu2}.\\
\\
The article is perhaps a follow up to \cite{BDP1} and particularly, \cite{Hi3}. We refer to them for a general introduction. 
In the exposition, we often suppose that the reader is familiar with them, particularly \cite{Hi3}.\\
\\
The article is organised as follows. In \S2, we describe generalities regarding the Shimura curve. 
In \S3, we consider the Serre-Tate deformation space of an ordinary closed point on the Shimura curve. 
Some parts of \S3 perhaps logically come before \S2. 
For example, the conclusion of \S3.1 is needed in the beginning of \S2.4. 
We suggest the reader to proceed accordingly. 
For convenience, we maintain the current ordering. 
In \S4, we prove the independence of mod $p$ modular forms. In \S5, we prove the vanishing of the $\mu$-invariant of 
a class of anticyclotomic Rankin-Selberg $p$-adic L-functions. 
In \S6, we prove the non-triviality of the $p$-adic Abel-Jacobi image of generalised Heegner cycles modulo $p$ over the $\Z_p$-anticyclotomic extension.\\
\\
We refer to the survey \cite{Bu4} for an expository account of the article.\\
\begin{thank} 
We are grateful to our advisor Haruzo Hida for continuous guidance and encouragement. 
We are grateful to Christopher Skinner for advice and encouragement. 
The topic was suggested by Haruzo Hida and Christopher Skinner as a follow up to \cite{Bu2}. 
We are grateful to Chandrashekhar Khare for continuous encouragement.
We thank Ernest Hunter Brooks and Francesc Castella for helpful conversations, particularly regarding the $p$-adic Waldspurger formula. 
We thank Dimitar Jetchev and Burt Totaro for detailed comments on the previous versions of the article. 
We thank Henri Darmon, Ben Howard, Ming-Lun Hsieh, Jan Nekov\'{a}\v{r}, Kartik Prasanna, 
Richard Taylor and Xin Wan for interesting conversations about the topic. 
Finally, we are indebted to the referee. 
The current form of the article owes a great deal to the referee's constructive criticism and thorough suggestions.
\end{thank}
\noindent\\
\noindent {\bf{Notation}}
We use the following notation unless otherwise stated.\\
\\
Let $D_{K}$ be the 
different (resp. discriminant) of $K/\Q$. 
Let $v$ be a place of $\Q$ and $w$ be a place of $K$ above $v$. Let $\Q_v$ be the completion of $\Q$ at $v$, $\varpi_v$ an uniformiser 
and $K_{v} = \Q_{v} \otimes_{\Q} K$.\\
\\
For a number field $L$, let ${\bf{A}}_{L}$ be the adele ring, ${\bf{A}}_{L,f}$ the finite adeles and ${\bf{A}}_{L,f}^{\Box}$ the finite adeles away from a finite set of places 
$\Box$ of $L$. 
Let $G_L$ be the absolute Galois group of $L$ and $\rec_{L}: {\bf{A}}_{L}^\times \rightarrow G_{L}^{ab}$ the geometrically normalized reciprocity law.\\
\\
\\
\section{Shimura curves}
\noindent In this section, we describe generalities regarding Shimura curves arising from indefinite quaternion algebras over the rationals.\\
\subsection{Setup} In this subsection, we recall a basic setup regarding Shimura curves. We occasionally follow \cite{Br}\\
\\
Let the notation and hypotheses be as in the introduction. 
In particular, $B$ denotes the indefinite quaternion algebra over $\Q$ of conductor $N^{-}$. 
Let $\cO_{B}$ be a maximal order in \cite[\S2.1]{Br}.\\
\\
Let $p_{0}$ be an auxiliary prime such that:\\
\\
(A1) For a prime $l$, the Hilbert symbol $(p_{0},N^{-})_{l}=1$ if and only if $l|N^{-}$.\\
(A2) All primes dividing $pN^{+}$ split in the real quadratic field $M_{0}:=\Q(\sqrt{p_{0}})$.\\
\\
The choice of $p_{0}$ determines a Hashimoto model for $B$ as follows. 
The algebra $B$ has a basis $\{1, t, j, tj\}$ as a vector space with 
$t^{2}=-N^{-}$, $j^{2}=p_{0}$ and $tj= -jt$ (\cf \cite[\S2.1]{Br}). 
Moreover, the $\Z$-span of the basis is contained in a unique maximal order in $\cO_{B}$. 
Let $\dagger$ be an involution on $B$ given by $b \mapsto b^{\dagger}=t^{-1}\overline{b}t$ for $b \in B$. 
Here $b \mapsto \overline{b}$ is the main involution of $B$.\\
\\
Let $G_{/\Q}$ be the algebraic group $B^\times$ and $h_0: Res_{\C/\R} \mathbb{G}_{m} \to G_{/\R}$ be the morphism of real group schemes arising from 
$$ a+bi \mapsto  \MX{a} {-b}{b}{a},$$
where $a+bi \in \C^\times$. Let $X$ be the set of $G(\R)$-conjugacy classes of $h_0$. We have a canonical isomorphism $X \iso \C - \R$. 
The pair $(G,X)$ satisfies Deligne's axioms for a Shimura variety. 
It gives rise to a tower $(Sh_{K}=Sh_{K}(G,X))_K$ of smooth proper curves over $\Q$ indexed by open compact subgroups $K$ of $G(\A_{\Q,f})$. 
The complex points of these curves are given by\\
\beq 
Sh_{K}(\C) =  G(\Q) \backslash X \times G(\A_{\Q,f}) / K .
\eeq
\noindent\\
In what follows, we consider the case when $K$ arises from the congruence subgroup $\Gamma_{1}(N^{+})$. 
Here $\Gamma_{1}(N^{+})$ denotes the norm one elements in an Eichler order of level $N^{+}$ in $\cO_{B}$.
We use the notation $Sh_{B}$ to denote the corresponding Shimura curve.\\
\\
\\
\subsection{$p$-integral model} 
In this subsection, we briefly recall $p$-integral smooth models of 
the Shimura curves. We occasionally follow \cite{Br}.\\
\\
Let the notation and hypotheses be as in \S2.1. 
The Shimura curve $Sh_{B/\Q}$ represents a functor $\cF$ classifying abelian surfaces having multiplication by $\cO_{B}$ 
along with additional structure (\cf \cite[\S2.2]{Br}). A $p$-integral 
interpretation of $\cF$ leads to a $p$-integral smooth model of $Sh_{B/\Q}$. 
A closely related functor $\cF^{(p)}$ gives rise to the relevant Shimura variety of level prime to $N^{-}p$ 
i. e. a tower of Shimura curves of level prime to $N^{-}p$. 
For later purposes, we consider the Shimura variety.\\
\\
The functor $\cF^{(p)}$ is given by\\
$$\cF^{(p)}: SCH_{/\Z_{(p)}} \to SETS$$
\beq S \mapsto \{(A,\iota,\bar{\lam},\eta^{(p)})_{/S}\}/ \sim .\eeq
Here,\\
\\
(PM1) $A$ is an abelian surface over $S$.\\
(PM2) $\iota: \cO_{B} \hookrightarrow \End_{S}A$ is an algebra embedding.\\
(PM3) $\bar{\lam}$ is a $\Z_{(p)}^{+}$-polarisation class of a homogeneous polarisation $\lam$ such that the Rosati involution of $\End_{S}A$ maps $\iota(l)$ to $\iota(l^{\dagger})$
 for $l \in \cO_B$.\\
(PM4) Let $\cT^{(N^{-}p)}(A)$ be the prime to $N^{-}p$ Tate module $\varprojlim_{(M,N^{-}p)=1} A[M]$. 
The notation $\eta^{(p)}$ denotes a full level structure of level prime to $N^{-}p$ given by an 
$\cO_{B}$-linear isomorphism 
$$\eta^{(p)}: \cO_{B} \otimes_\Z \widehat{\Z}^{(N^{-}p)} \iso \cT^{(N^{-}p)}(A)$$
for $\widehat{\Z}^{(N^{-}p)}= \prod_{l\ndivide N^{-}p} \Z_l$.\\
\\
The notation $\sim$ denotes up to a prime to $N^{-}p$ isogeny.\\
\begin{thm}[Morita, Kottwitz]
 The functor $\cF^{(p)}$ is represented by a smooth proper pro-scheme $Sh^{(p)}_{/\Z[\frac{1}{N}]}$. 
(\cf \cite[\S 2.2]{Br}).\\
\end{thm}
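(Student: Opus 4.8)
We must show that the moduli functor $\cF^{(p)}$ classifying tuples $(A,\iota,\bar\lambda,\eta^{(p)})$ over $\Z_{(p)}$-schemes, up to prime-to-$N^-p$ isogeny, is represented by a smooth proper pro-scheme $Sh^{(p)}$ over $\Z[1/N]$. This is attributed to Morita and Kottwitz, and the reference is \cite[\S2.2]{Br}.

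\medskip

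\noindent\textbf{Proof plan.}

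The plan is to reduce to the known representability and smoothness theorems for PEL-type Shimura varieties of abelian-surface type, exactly as in \cite{Br} and, further back, in the work of Morita and Kottwitz; only a short amount of bookkeeping is needed to match the present normalizations. First I would recall that the rational datum $(G,X)=(B^\times, \C\smallsetminus\R)$ together with the involution $\dagger=\,t^{-1}\overline{(\cdot)}\,t$ on $B$ is a PEL datum of type (C) (symplectic type), since $B$ is a quaternion algebra over $\Q$ and $\dagger$ is a positive involution of the second kind relative to the polarization form; this is exactly the setting treated by Kottwitz \cite{K} (in the notation of \loccit, $(B,\dagger, V, \langle\,,\,\rangle, h_0)$ with $V=B$ acted on by left multiplication). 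The maximal order $\cO_B$, stable under $\dagger$ by the Hashimoto-model choice of $p_0$ satisfying (A1)--(A2), gives the integral PEL datum away from $N^-p$: at primes $l\nmid N^-$, $\cO_B\otimes\Z_l\cong M_2(\Z_l)$, so condition (PM2) is a Morita self-duality datum and the level structure (PM4) is the usual full prime-to-$N^-p$ level structure; at primes $l\mid N^-$, $l\ne p$, the component is a maximal order in the quaternion division algebra and is handled by Carayol/Morita's integral model (representability by a relatively representable rigid moduli problem). The key point is that away from $N^-p$ the polarization degree is prime to the residue characteristic, so Zarhin's trick and the standard Artin/Mumford rigidity arguments make $\cF^{(p)}$ an étale-locally-finite-type algebraic stack which is in fact a scheme once enough level structure is added.

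The steps, in order, are: (1) verify that the functor is \emph{rigid}, i.e.\ that $(A,\iota,\bar\lambda,\eta^{(p)})$ has no nontrivial automorphisms in the prime-to-$N^-p$ isogeny category once, say, a full level-$\ell$ structure for one auxiliary prime $\ell\nmid N^-p$ is imposed — this follows because such an automorphism would act trivially on $A[\ell]$ and commute with $\iota$, hence be $\equiv 1$ by Serre's lemma; (2) invoke the Artin/Keel-Mori representability criterion (or, more simply, quote \cite{K}, whose Theorem is precisely representability of the PEL moduli problem by a quasi-projective $\Z_{(p)}$-scheme) to get that $\cF^{(p)}$ with enough level is a scheme, then pass to the pro-scheme limit over the remaining prime-to-$N^-p$ level; (3) \emph{smoothness}: because $p\nmid N^-$ and the polarization is prime to $p$, the abelian surfaces in question have ordinary-or-supersingular reduction of PEL type and the Grothendieck--Messing/Serre--Tate deformation theory shows the deformation functor of each point is formally smooth of the expected relative dimension $1$ over $\Z_{(p)}$ — this is where the condition $p\nmid N^-$ is essential, as it forces the local model at $p$ to be smooth (the quaternion algebra is split at $p$); (4) \emph{properness}: use the Néron--Ogg--Shafarevich criterion plus the semistable reduction theorem together with the fact that $B$ is a division algebra when $N^-\ne 1$ — then $Sh_B$ has no cusps and the valuative criterion of properness holds since any abelian surface with $\cO_B$-action (with $B$ division) cannot degenerate to a semiabelian variety of positive toric rank (the toric part would be an $\cO_B$-stable subtorus, impossible by a dimension/simplicity argument). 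For $N^-=1$ one is in the modular/Siegel situation, which is excluded here by (H2) anyway.

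\medskip

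\noindent\textbf{Main obstacle.} The genuinely substantive point — and the one I would lean on \cite{Br}, Morita, and Kottwitz for rather than reprove — is step (3)--(4) at the prime $N^-$ (not $p$): constructing the \emph{integral} model smoothly across the primes dividing $N^-$. There the quaternion order is nonmaximal/ramified, the naive moduli problem is not formally smooth, and one must either use Carayol's interpretation via a Drinfeld-type level structure or Morita's explicit integral model, and then check that the resulting scheme is still smooth and proper over $\Z[1/N]$ — note the base ring is $\Z[1/N]$, not $\Z_{(p)}$, precisely so that the bad primes $N^-\mid N$ are inverted and the genuinely delicate ramified-quaternion integral geometry is avoided. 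So in the end the proof is: assemble the PEL datum, cite Kottwitz for representability and Morita for the quaternionic integral model away from $N$, check rigidity via an auxiliary level structure, read off smoothness from Serre--Tate/Grothendieck--Messing using $p\nmid N$, and properness from the absence of cusps because $B$ is a division algebra. I do not expect to need any new idea beyond careful cross-referencing with \cite[\S2.2]{Br}.
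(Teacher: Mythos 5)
The paper gives no proof of this statement: it is stated as a theorem attributed to Morita and Kottwitz with a bare citation to \cite[\S 2.2]{Br}, and the text immediately proceeds to introduce the universal abelian surface $\cA$. So there is no ``paper's own proof'' to compare against; your task was essentially to reconstruct how the cited result is established in the literature, which you have done at the right level of detail. Your four-step outline (assemble the PEL datum and cite Kottwitz for quasi-projective representability over $\Z_{(p)}$; rigidify with an auxiliary prime-to-$N^{-}p$ level; smoothness from Grothendieck--Messing/Serre--Tate using that $B$ is split at $p$; properness from the valuative criterion and the fact that an $\cO_{B}$-abelian surface with $B$ a division algebra cannot acquire a toric part) is the standard route, and your remark that the base is $\Z[1/N]$ rather than $\Z_{(p)}$ --- so the genuinely delicate integral geometry at primes dividing $N^{-}$ is simply sidestepped by inverting them --- is exactly the point that makes the statement as phrased a consequence of Kottwitz's theorem without extra work at the ramified quaternionic primes.

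One technical inaccuracy worth flagging: the involution $\dagger\colon b\mapsto t^{-1}\bar b t$ restricts to the identity on the center $\Q$ of $B$, so it is a positive involution of the \emph{first} kind, not the second kind as you wrote. (Involutions of the second kind are the unitary/type~(A) case, where the center is a quadratic extension acted on nontrivially.) The PEL datum here is of type~(C)/(D) in Kottwitz's trichotomy, not type~(A); this does not affect your argument structure, but it is the kind of slip that a reader checking the reduction to Kottwitz's theorem would stumble over. Everything else, including the use of (H2) to rule out $N^{-}=1$, is consistent with the hypotheses in force in the paper.
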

\noindent \\
Let $\cA$ be the universal abelian surface.\\
\\
\\
\subsection{Idempotent} 
In this subsection, we describe generalities regarding an idempotent in the ring of algebraic correspondences on the universal abelian surface.\\
\\
Let the notation and hypotheses be as in \S2.1. 
Let $\epsilon \in \cO_{B} \otimes \cO_{M_{0}} [\frac{1}{2p_{0}}]$ be the non-trivial idempotent given by
$$
\epsilon = \frac{1}{2}\bigg{(}  1\otimes 1 + \frac{1}{p_{0}} j \otimes \sqrt{p_{0}}\bigg{)}.
$$
\noindent\\
Based on the hypotheses (H1) and (H2), we have an optimal embedding 
$$ \iota_{K}: K \hookrightarrow B$$
(\cf \cite[\S2.4]{Br}).\\
\\
In view of the moduli interpretation of the Shimura variety $Sh^{(p)}$, we can let $\epsilon$ 
naturally act on the test objects as a correspondence. 
It follows that $\epsilon$ can be regarded as an algebraic correspondence on the universal abelian surface $\cA$.\\
\\
Via the $p$-adic embedding $\iota_p$, we often regard the idempotent $\epsilon$ as an element in $\cO_{B} \otimes \Z_{p}$. 
In view of the moduli interpretation, the $p$-divisible group $\cA[p^{m}]$ has a natural structure of an $\cO_{B} \otimes \Z_{p}$-module 
for a positive integer $m$. 
We thus have a natural action of the idempotent $\epsilon$ on the $p$-divisible group $\cA[p^{m}]$. 
The action plays a key role in the article.\\
\\
Heuristically, the idempotent often reduces the complexity arising from the non-commutative nature of the quaternion algebra to a commutative one. 
Such situations occasionally arise in the article.\\
\begin{remark}
An analogous idempotent arises in the case of an unitary model of a quaternionic Shimura variety over a totally real field (\cf \cite{Ca} and \cite{Bu1}).
\end{remark}
\noindent\\
\subsection{CM points} In this subsection, we introduce notation regarding CM points on the Shimura variety.\\
\\
Let the notation and hypotheses be as in \S2.2. 
Let $b$ be a positive integer prime to $N$ and recall that $\cO_{b}$ denote the order $\Z+b\cO$. 
We also recall that $\Pic(\cO_{b})$ denotes the corresponding ring class group 
of conductor $b$.\\
\\
Associated to an ideal class $[\mathfrak{a}] \in \Pic(\cO_{b})$, we have the corresponding CM point 
$x(\mathfrak{a})$ on the Shimura variety $Sh^{(p)}$ (\cf \cite[\S2.4]{Br} and \cite[\S1.1]{LZZ}). 
Moreover, these CM points are Galois conjugates (\cf \cite[\S2.4]{Br}).\\
\\
\\
\subsection{Igusa tower}
\noindent In this subsection, we briefly recall the notion of 
$p$-ordinary Igusa tower over the Shimura variety. 
We follow \cite[Ch. 8]{Hi1} and \cite[\S1]{Hi2}.\\
\\
Let the notation and hypotheses be as in \S2.2. 
Recall that $p$ is a prime such that $p \ndivide N$. 
Let $\cW$ be the strict Henselisation inside $\overline{\Q}$ of the local ring of $\Z_{(p)}$ corresponding to $\iota_p$. 
Let $\F$ be the residue field of $\cW$. Note that $\F$ is an algebraic closure of $\F_p$.\\
\\
Let $Sh^{(p)}_{/\cW} = Sh^{(p)} \times_{\Z[\frac{1}{N}]} \cW$ and $Sh^{(p)}_{/ \F}= Sh^{(p)}_{/\cW} \times_{\cW} \F$.\\
\\
From now, let $Sh$ denote $Sh^{(p)}_{/ \F}$. 
Recall that $\cA$ denotes the universal abelian surface over $Sh$.\\
\\
Let $Sh^{ord}$ be the subscheme of $Sh$ i.e. the locus on which the Hasse-invariant does not vanish. It is an open dense subscheme. 
Over $Sh^{ord}$, the connected part $\cA[p^m]^{\circ}$ of $\cA[p^m]$ is étale-locally isomorphic to 
$\mu_{p^{m}} \otimes_{\Z_p} \cO_{B}^\circ$ as an $\cO_{B,p}$-module. Here $\cO_{B,p}=\cO_{B} \otimes \Z_{p}$ and $\cO_{B}^\circ$ is an $\cO_{B,p}$-direct summand of $\cO_{B,p}$. 
Moreover, $\cO_{B}^\circ$ is a maximally isotropic subspace of $\cO_{B,p}$ with respect to the bilinear form associated to the quadratic form arising from the reduced norm 
such that $\rank_{\Z_p} \cO_{B}^\circ = 2$. 
Let $\cO_{B}^{\acute{e}t}$ be the orthogonal complement of $\cO_{B}^\circ$. 
For an explicit description of both, we refer to the end of \S3.1.\\
\\
We now define the Igusa tower. For $m \in \mathbb{N}$, the $m^{th}$-layer of the Igusa tower over $Sh^{ord}$ is defined by
\beq Ig_m=\Isom_{\cO_{B,p}}(\mu_{p^m}\otimes_{\Z_p} \cO_{B}^{\circ}, \cA[p^m]^\circ).\eeq
Note that the projection $\pi_m: Ig_m \rightarrow Sh^{ord}$ is finite and étale.\\
\\
In view of the description of the moduli functor $\cF^{(p)}$, we have the following isomorphism
\beq Ig_m\iso \Isom_{\epsilon \cO_{B,p}}(\mu_{p^m}\otimes_{\Z_p} \epsilon \cO_{B}^{\circ}, \epsilon(\cA[p^m]^\circ)).\eeq 
The Cartier duality  and the polarisation $\bar{\lam}_x$ induces an isomorphism
\beq Ig_m\iso \Isom_{\cO_{B,p}} (\cO_{B}^{\acute{e}t}/p^{m}\cO_{B}^{\acute{e}t}, \cA[p^m]^{\acute{e}t}) \iso \Isom_{\epsilon \cO_{B,p}} (\epsilon(\cO_{B}^{\acute{e}t}/p^{m}\cO_{B}^{\acute{e}t}), \epsilon(\cA[p^m]^{\acute{e}t})).\eeq
\noindent\\
The full Igusa tower over $Sh^{ord}$ is defined by
\beq Ig=\varprojlim Ig_m = \Isom_{\cO_{B,p}}(\mu_{p^\infty}\otimes_{\Z_p} \cO_B^{\circ}, \cA[p^\infty]^\circ)\iso \Isom_{\epsilon \cO_{B,p}}(\mu_{p^\infty}\otimes_{\Z_p} \epsilon \cO_B^{\circ}, \epsilon(\cA[p^\infty]^\circ)).\eeq
In view of (2.5) and (2.6), it suffices to study the projection by $\epsilon$ of the $p$-divisible group.\\
\\
(Et) Note that the projection $\pi: Ig \rightarrow Sh^{ord}$ is étale.\\
\\
Let $x$ be a closed ordinary point in $Sh$ and $A_x$ the corresponding abelian surface. 
We have the following description of the $p^\infty$-level structure on $A_x[p^\infty]$.\\
\\
(PL) Let $\eta_{p}^\circ$ be a $p^\infty$-level structure on $A_x[p^\infty]^\circ$. 
It is 
given by an $\cO_{B,p}$-module isomorphisms $\eta_{p}^\circ: \mu_{p^\infty}\otimes_{\Z_p} \cO_B^{\circ} \iso A_x[p^\infty]^\circ$. 
The Cartier duality  and the polarisation $\bar{\lam}_x$ induces an isomorphism 
$\eta_{p}^{\acute{e}t}: O_{B,p}^{\acute{e}t} \iso A_x[p^\infty]^{\acute{e}t}$.\\
\\
Let V be an irreducible component of $Sh$ and $V^{ord}$ the intersection $V \cap Sh^{ord}$. 
Let $I$ be the inverse image of $V^{ord}$ under $\pi$. In \cite[Ch.8]{Hi1} and \cite{Hi2}, it has been shown that\\
\\
(Ir) $I$ is an irreducible component of $Ig$.\\
\\
\\
\subsection{Mod $p$ modular forms}
In this subsection, we briefly recall the notion of mod $p$ modular forms on an irreducible component of the 
Shimura variety.\\
\\
Let $V$ and $I$ be as in \S 2.5. Let $C$ be an $\F$-algebra. The space of mod $p$ modular forms on $V$ over $C$ is defined by
\beq M(V,C)=H^0(I_{/C},\cO_{I_{/C}})\eeq
for $I_{/C}:= I \times_{\F} C$.\\
\\
In view of \S2.2 and \S2.5, we have the following geometric interpretation of mod $p$ modular forms.\\
\\
A mod $p$ modular form is a function $f$ of isomorphism classes of $\tilde{x}=(x,\eta_p^{\circ})_{/C'}$ where $C'$ is a $C$-algebra, 
$x=(A,\iota,\bar{\lam},\eta^{(p)})_{/C'} \in \cF^{(p)}(C')$ and $\eta_p^{\circ}:\mu_{p^\infty}\otimes_{\Z_p} \cO_{B}^\circ \iso A[p^\infty]^\circ$ is 
an $\cO_{B,p}$-linear isomorphism, such that the following conditions are satisfied.\\
\\
(G$1$) $f(\tilde{x}) \in C'$.\\
(G$2$) If $\tilde{x} \iso \tilde{x}'$, then $f(\tilde{x})=f(\tilde{x}')$. 
Here $\tilde{x} \iso \tilde{x}'$ denotes that $x \iso x'$ and the corresponding isomorphism between $A$ and $A'$ induces an isomorphism between $\eta_p^{\circ}$ and $\eta_p'^{\circ}$.\\
(G$3$) $f(\tilde{x}\times_{C'} C'')=h(f(\tilde{x}))$ for any $C$-algebra homomorphism $h:C' \rightarrow C''$.\\
\\
We have an analogous notion of $p$-adic (resp. classical) modular forms (resp., \cf \cite[Ch. 8]{Hi1} and \cite[\S3.1]{Br}). 
In the article, we often regard classical modular forms as $p$-adic modular forms in the usual way.\\
\\
\\
\section{Serre-Tate deformation space} 
\noindent In this section, we describe certain aspects of the Serre-Tate deformation space of the Shimura variety. 
In \S3.1-3.2, we describe generalities regarding the Serre-Tate deformation theory of an ordinary closed point on the Shimura variety. 
In \S3.3, we consider certain Hecke operators and determine their action on the Serre-Tate expansion of classical modular forms around CM points 
on the Shimura variety.\\
\\
The deformation theory plays a foundational role in Chai's theory of Hecke stable subvarieties of a Shimura variety. 
This section thus plays a key role in the article.\\
\\
\subsection{Serre-Tate deformation theory}
In this subsection, we briefly recall Serre-Tate deformation theory of an ordinary closed point on the Shimura variety. We follow \cite[\S8.2]{Hi1}, \cite[\S2]{Hi3} and 
\cite[\S1]{Hi4}.\\
\\
Let the notation and assumptions be as in \S2.5. 
In particular, $Sh$ denotes the Shimura variety of level prime to $N^{-}p$ and 
$Sh^{ord}$ the $p$-ordinary locus. 
Let $x$ be a closed point in $Sh^{ord}$ carrying $(A_x,\iota_x,\bar{\lam}_x,\eta_{x}^{(p)})_{/\F}$. Let $V$ be the irreducible component of 
$Sh$ containing $x$.\\
\\
Let $CL_W$ be the category of complete local $W$-algebras with residue field $\F$. 
Let $\cD_{/W}$ be the fiber category over $CL_W$ of deformations of $x_{/\F}$ defined as follows. 
Let $R \in CL_W$. The objects of $\cD_{/W}$ over $R$ consist of $x'^{*}=(x',\iota_{x'})$, where $x' \in \cF^{(p)}(R)$ and 
$\iota_{x'}: x' \times_R \F \iso x$. Let $x'^{*}$ and $x''^{*}$ be in $\cD_{/W}$ over $R$. By definition, a morphism $\phi$ between 
$x'^{*}$ and $x''^{*}$ is a morphism (still denoted by) $\phi$ between $x'$ and $x''$ satisfying \cite[(7.3)]{Hi1} and the following condition.\\
\\
(M) Let $\phi_0$ be the special fiber of $\phi$. The automorphism $\iota_{x''} \circ \phi_0 \circ \iota_{x'}^{-1}$ of $x$ equals the identity.\\
\\
Let $\widehat{\cF}_{x}$ be the deformation functor given by\\
$$\widehat{\cF}_{x}: CL_{/W} \rightarrow SETS$$
\beq R \mapsto \{ x'^{*}_{/R} \in \cD\}/\iso .\eeq
\\
The notation $\iso$ denotes up to an isomorphism.\\
\\
As $R\in CL_W$, by definition $R$ is a projective limit of local $W$-algebras with nilpotent maximal ideal. 
We can (and do) suppose that $R$ is a local Artinian $W$-algebra with the nilpotent maximal ideal $m_R$. 
Let $x'^{*}_{/R} \in \cD$ and $A$ denote $A_{x'}$. By Drinfeld's theorem (\cf \cite[\S8.2.1]{Hi1}), $A[p^\infty]^\circ(R)$ is killed 
by $p^{n_0}$ for sufficiently large $n_0$. Let $y \in A(\F)$ and $\tilde{y}\in A(R)$ such that $\tilde{y}_0=y$ 
for $\tilde{y}_0$ being the special fiber of $\tilde{y}$. 
As $A_{/R}$ is smooth, such a lift always exists. 
By definition, $\tilde{y}$ is determined modulo $\ker(A(R)\mapsto A(\F))=A[p^\infty]^\circ(R)$. 
It thus follows that for $n\geq n_0$, $``p^n"y_0:=p^n\tilde{y}$ is well defined. 
From now, we suppose that $n\geq n_0$. If $y \in  A[p^n](\F)$, 
then $``p^n"y \in A[p^{\infty}]^{\circ}(R)$ by definition. 
We now consider the $\epsilon$-components (\cf \S2.3). 
Recall that it suffices to consider the $\epsilon$-components in view of the isomorphisms (2.5) and (2.6).\\
\\
We thus have a homomorphism
\beq ``p^n" : \epsilon A[p^n](\F) \rightarrow  \epsilon A[p^{\infty}]^{\circ}(R).\eeq
We also have the commutative diagram
\begin{diagram}
 \epsilon A[p^{n+1}]^{\acute{e}t}(R) &\rTo^{\iso} & \epsilon A[p^{n+1}](\F) &\rTo^{``p^{n+1}"} & \epsilon A[p^{\infty}]^{\circ}(R)\\
\dTo_{p} & &\dTo_{p} & &\dTo_{=}\\
\epsilon A[p^{n}]^{\acute{e}t}(R) &\rTo^{\iso} & \epsilon A[p^{n}](\F) &\rTo^{``p^{n}"} & \epsilon A[p^{\infty}]^{\circ}(R).
\end{diagram}
\noindent\\
Passing to the projective limit, this gives rise to a homomorphism
\beq ``p^\infty" : \epsilon A[p^\infty](\F) \rightarrow \epsilon A[p^{\infty}]^{\circ}(R).\eeq
\\
(CC) Let $y=\varprojlim{y_n} \in  \epsilon A[p^\infty](\F) \iso  \epsilon A_{x}[p^\infty]^{\acute{e}t}$, 
where $y_n \in \epsilon A[p^n](\F)$ and the later isomorphism is induced by $\iota_{x'}$. 
Let $q_{n,p}(y_n)=``p^n"y_n$ and $q_{p}(y) = \lim q_{n,p}(y_n)$. 
By definition, 
$$q_{p}(y) \in \epsilon A[p^{\infty}]^{\circ}(R)\iso 
\Hom(\epsilon ^{t}A_{x}[p^{\infty}]^{\acute{e}t}, \widehat{\mathbb{G}}_{m}(R)).$$ 
Let $q_{A,p}$ be the pairing given by\\
$$q_{A,p}:\epsilon A_{x}[p^{\infty}]^{\acute{e}t} \times \epsilon ^{t}A_{x}[p^{\infty}]^{\acute{e}t} \rightarrow \widehat{\mathbb{G}}_{m}(R)$$
\beq q_{A,p}(y,z)=q_{p}(y)(z).\eeq
\\
We have the following fundamental result.\\
\begin{thm}[Serre-Tate] 
Let the notation be as above.\\
 (1). There exists a canonical isomorphism
\beq \widehat{\cF}_{x}(R) \iso 
\Hom_{\Z_p}( \epsilon A_x[p^\infty ]^{\acute{e}t} \times \epsilon^{t}{}A_x[p^\infty ]^{\acute{e}t}, \widehat{\mathbb{G}}_{m}(R)) \eeq
given by $x'^{*} \mapsto q_{A_{x'},p}$.\\
(2). The deformation functor $\widehat{\cF}_{x}$ is represented by the formal scheme $\widehat{S}_{/W}:=\Spf(\widehat{\cO}_{V,x})$. 
A $p^\infty$-level structure as in (PL), gives rise to a canonical isomorphism of the deformation space $\widehat{S}_{/W}$ with 
the formal torus $\widehat{\mathbb{G}}_{m}$ (\cf \cite[Prop. 1.2]{Hi4}).\\
\end{thm}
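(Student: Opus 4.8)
The plan is to deduce the statement from the classical Serre-Tate theorem for ordinary abelian varieties, made equivariant for the $\cO_B$-action and the polarisation, and then to pass to the $\epsilon$-isotypic part, at which point the deformation space becomes one-dimensional. I would begin by recalling that, in the form of Drinfeld and Katz (\cf \cite[\S8.2]{Hi1}), Serre-Tate theory says that sending a deformation $x'^{*}$ of $x$ to the induced deformation of the $p$-divisible group $A_x[p^\infty]$ is an equivalence of fiber categories over $CL_W$, compatible with endomorphisms, polarisations and prime-to-$p$ level structures. Since the level datum $\eta_x^{(p)}$ carries no automorphisms and lifts uniquely along any square-zero extension, and the $\Z_{(p)}^{+}$-polarisation class $\bar\lambda_x$ is determined by its special fiber, $\widehat{\cF}_x(R)$ is identified, functorially in $R$, with the set of deformations over $R$ of $A_x[p^\infty]$ endowed with its $\cO_{B,p}$-action and the polarisation pairing.

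Next I would pass to $\epsilon$-components. As $\epsilon$ is an idempotent in $\cO_{B,p}$ (\cf \S2.3), deforming $A_x[p^\infty]$ together with its $\cO_{B,p}$-structure is the same as deforming $\epsilon A_x[p^\infty]$ with its residual structure; this is precisely what the isomorphisms (2.5) and (2.6) record, and the splitting is functorial in $R$ because $\epsilon$ is defined over $\Z_p$. Since $A_x$ is ordinary, the $\epsilon$-part fits into a connected-\'etale sequence $0 \to \epsilon A_x[p^\infty]^{\circ} \to \epsilon A_x[p^\infty] \to \epsilon A_x[p^\infty]^{\et} \to 0$ in which the multiplicative sub and the \'etale quotient each lift uniquely, up to unique isomorphism, to every $R$, because ind-multiplicative and ind-\'etale $p$-divisible groups are rigid; hence a deformation of $\epsilon A_x[p^\infty]$ is precisely an extension class, that is, an element of $\Ext^{1}_{R}(\epsilon A_x[p^\infty]^{\et}, \epsilon A_x[p^\infty]^{\circ})$.

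The third step is to compute this $\Ext^1$, which is exactly the purpose of the maps $q_{n,p}$, $q_{p}$ and the pairing $q_{A,p}$ of (CC): the connecting homomorphism attached to the above sequence carries a deformation to $y \mapsto q_{p}(y)=\lim_n q_{n,p}(y_n)$, and because $\epsilon A_x[p^\infty]^{\et}$ is a constant \'etale $p$-divisible group over $\F$ while $\epsilon A_x[p^\infty]^{\circ}(R)$ is killed by a power of $p$ (Drinfeld's theorem, \cf \cite[\S8.2.1]{Hi1}), this homomorphism is an isomorphism $\Ext^{1}_{R}(\epsilon A_x[p^\infty]^{\et},\epsilon A_x[p^\infty]^{\circ}) \iso \Hom_{\Z_p}(\epsilon A_x[p^\infty]^{\et}, \epsilon A_x[p^\infty]^{\circ}(R))$. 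Cartier duality together with $\bar\lambda_x$ identifies $\epsilon A_x[p^\infty]^{\circ}(R)$ with $\Hom(\epsilon^{t}A_x[p^\infty]^{\et}, \widehat{\mathbb{G}}_{m}(R))$, so the right-hand side becomes $\Hom_{\Z_p}(\epsilon A_x[p^\infty]^{\et} \times \epsilon^{t}A_x[p^\infty]^{\et}, \widehat{\mathbb{G}}_{m}(R))$; tracing through the construction of $q_{A,p}$ shows that the resulting bijection sends $x'^{*}$ to $q_{A_{x'},p}$, which is assertion (1). For assertion (2): since $V$ is an irreducible component of the fine moduli scheme $Sh^{(p)}$, the complete local ring $\widehat{\cO}_{V,x}$ pro-represents $\widehat{\cF}_x$, so $\widehat{S}_{/W}=\Spf(\widehat{\cO}_{V,x})$ represents it; and a $p^\infty$-level structure as in (PL) trivialises $\epsilon A_x[p^\infty]^{\circ}$ as $\mu_{p^\infty}\otimes_{\Z_p}\epsilon\cO_B^{\circ}$ and, dually, the \'etale part $\epsilon^{t}A_x[p^\infty]^{\et}$, so that, since $\rank_{\Z_p}\epsilon\cO_B^{\circ}=1$, the $\Hom$-space of part (1) is canonically identified with $\widehat{\mathbb{G}}_{m}(R)$, whence $\widehat{S}_{/W}\iso\widehat{\mathbb{G}}_{m}$ (\cf \cite[Prop.~1.2]{Hi4}).

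The step I expect to demand the most care is the third one: one has to check that the connecting homomorphism is surjective onto the full $\Hom$-space, equivalently that every $\Z_p$-bilinear form is realised by an extension of $p$-divisible groups carrying compatible lifts of $\iota$ and $\bar\lambda$, the polarisation imposing a symmetry constraint that is vacuous here because the relevant module has $\Z_p$-rank one, and that this identification is compatible with the explicit construction of $q_{A,p}$, so that the isomorphism in (1) is genuinely $x'^{*}\mapsto q_{A_{x'},p}$ and not merely an abstract bijection. Everything else is a transcription of \cite[\S8.2]{Hi1}, \cite[\S2]{Hi3} and \cite[\S1]{Hi4} to the quaternionic setting, the one genuinely new ingredient being the reduction to $\epsilon$-components furnished by (2.5) and (2.6).
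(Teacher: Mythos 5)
Your argument reproduces the paper's strategy step for step: invoke the Serre--Tate lifting theorem to reduce deforming $x$ to deforming the Barsotti--Tate $\cO_{B,p}$-module $A_x[p^\infty]$, pass to $\epsilon$-components via (2.5)--(2.6), classify deformations as extensions of the \'etale part by the connected part, and finish with Cartier duality and the polarisation, together with the rank-one observation for part (2). The only place you diverge is in the computation of the extension group. The paper establishes $\Ext^1_{R_{fppf}}\iso H^1(R_{fppf},\underline{\Hom})$ by an fppf-descent argument --- every connected-\'etale extension of finite flat group schemes splits over an fppf cover, so $\underline{\Ext}^1_{fppf}=0$ and the low-degree exact sequence collapses --- and then identifies the resulting $\varprojlim_n H^1$ with $\widehat{\mathbb{G}}_m(R)$. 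You instead proceed in the spirit of Katz's original exposition, invoking Drinfeld's rigidity to see that $\epsilon A_x[p^\infty]^{\circ}(R)$ is $p^{n_0}$-torsion and reading the isomorphism off the explicit $q$-pairing of (CC). Both are standard and both visibly land on $x'^{*}\mapsto q_{A_{x'},p}$; your route makes that identification more transparent, while the paper's sheaf-theoretic route packages base-change functoriality more cleanly. Your remark that the symmetry constraint imposed by the polarisation is vacuous because the relevant $\Z_p$-module has rank one is precisely the point that makes the quaternionic case collapse to a one-parameter formal torus, and it is also what underlies the paper's proof of (2).
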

\begin{proof} We first prove (1). 
It follows from an argument similar to the proof of \cite[Thm. 8.9]{Hi1}. 
We sketch the details for convenience.\\
\\
A fundamental result of Serre and Tate states that deforming $x$ over $CL_W$ is 
equivalent to deform the corresponding Barsotti-Tate $\cO_{B,p}$-module $A_x[p^\infty]_{/\F}$ over $CL_W$ (\cf \cite[\S1]{Ka1} and \cite[\S8.2.2]{Hi1}). 
In view of the isomorphisms (2.5) and (2.6), 
deforming the Barsotti-Tate $\cO_{B,p}$-module $A_x[p^\infty]_{/\F}$ is equivalent to deform 
the Barsotti-Tate $\epsilon \cO_{B,p}$-module $\epsilon A_x[p^\infty]_{/\F}$.\\
\\
Let $R$ be as above. 
Consider the sheafification $\underline{\Hom}_{R_{fppf}}(\epsilon A[p^{n}]^{\circ}, \epsilon A[p^{n}]^{\acute{e}t})$ (resp. $\underline{Ext^{1}}_{R_{fppf}}(\epsilon A[p^{n}]^{\circ}, \epsilon A[p^{n}]^{\acute{e}t})$) 
of the presheaf $U \mapsto \Hom_{U}(\epsilon A[p^{n}]^{\circ}_{/U}, \epsilon A[p^{n}]^{\acute{e}t}_{/U})$ (resp. $U \mapsto Ext^{1}_{U}(\epsilon A[p^{n}]^{\circ}_{/U}, \epsilon A[p^{n}]^{\acute{e}t}_{/U})$). 
Recall that a connected-étale extension $\epsilon A[p^{n}]^{\circ} \rightarrow X \twoheadrightarrow \epsilon A[p^{n}]^{\acute{e}t}$ 
in the category of finite flat $\Z/p^{n}\Z$-modules over $R$ splits over an {\it{fppf}}-extension $R'/R$. 
We thus have 
$$\underline{Ext^{1}}_{R_{fppf}}(\epsilon A[p^{n}]^{\circ}, \epsilon A[p^{n}]^{\acute{e}t})=0$$ 
and a splitting 
$$X=\epsilon A[p^{n}]^{\circ} \oplus \epsilon A[p^{n}]^{\acute{e}t}.$$ 
Choice of a section in the splitting gives rise to a homomorphism $\phi_{R'}\in \Hom_{R'}(\epsilon A[p^{n}]^{\circ}_{/R'}, \epsilon A[p^{n}]^{\acute{e}t}_{/R'})$. 
By construction, $R' \mapsto \phi_{R'}$ satisfies the descent datum. 
We thus have a morphism 
$$Ext^{1}_{R_{fppf}}(\epsilon A[p^{n}]^{\circ}, \epsilon A[p^{n}]^{\acute{e}t}) \rightarrow H^{1}(R_{fppf}, \underline{\Hom}_{R_{fppf}}(\epsilon A[p^{n}]^{\circ}, \epsilon A[p^{n}]^{\acute{e}t})).$$
By an {\it{fppf}}-descent, this is in fact an isomorphism. 
We conclude that 
$$
Ext^{1}_{R_{fppf}}(\epsilon A[p^{\infty}]^{\circ}, \epsilon A[p^{\infty}]^{\acute{e}t}) 
\iso \varprojlim_{n} Ext^{1}_{R_{fppf}}(\epsilon A[p^{n}]^{\circ}, \epsilon A[p^{n}]^{\acute{e}t})
\iso \Hom_{\Z_p}( \epsilon A_x[p^\infty ]^{\acute{e}t} \times \epsilon^{t}{}A_x[p^\infty ]^{\acute{e}t}, \widehat{\mathbb{G}}_{m}(R)).
$$
The last isomorphism follows from the additional fact that 
$ \widehat{\mathbb{G}}_{m} \iso \varprojlim_{n} R^{1}\pi_{*}\mu_{p^n} \iso  \widehat{\mathbb{G}}_{m}$. 
Here $\pi: R_{fppf} \rightarrow R_{\acute{e}t}$ is the projection for the small étale site $R_{\acute{e}t}$.\\
\\
We conclude that the deformation functor $\widehat{\cF}_{x}$ is represented by 
$\Hom_{\Z_p}(\epsilon A_x[p^\infty ]^{\acute{e}t} \times \epsilon ^{t}A_x[p^\infty ]^{\acute{e}t}, \widehat{\mathbb{G}}_{m}(R))$ and 
this finishes the proof of (1).\\
\\
We now prove (2). The first part of (2) follows from general deformation theory. The polarisation $\lam_x$ induces a canonical $\cO_B$-linear
isomorphism $A_x \iso ^{t}A_x$. 
For a given choice of $p^\infty$-level structure
$\eta_{p}^{\acute{e}t}$ as in (PL), 
we have a canonical isomorphism 
$\Hom_{\Z_p}(\epsilon A_x[p^\infty ]^{\acute{e}t} \times \epsilon A_x[p^\infty ]^{\acute{e}t}, \widehat{\mathbb{G}}_{m}) \iso \widehat{\mathbb{G}}_{m}$. 
This finishes the proof.\\
\end{proof}
\noindent\\
\noindent Let $x_{ST}$ be the universal deformation. A $p^{\infty}$-level structure $\eta_{p}^{\circ}$ of $x$ gives rise to a canonical 
$p^{\infty}$-level structure $\eta_{p,ST}^{\circ}$ of the universal deformation $x_{ST}$. \\
\\
We now recall a few definitions.\\
\begin{defn}Recall that a $p^\infty$-level structure as in (PL), gives rise to a canonical isomorphism of the deformation space 
$\widehat{S}_{/W}$ with the formal 
torus $\widehat{\mathbb{G}}_{m} $ (\cf part (2) of Theorem 3.1). 
Under this identification, let $t$ be the co-ordinate of the deformation space 
$\widehat{S}_{/W}$ for $t$ being the usual co-ordinate of $\widehat{\mathbb{G}}_{m}$. 
We call $t$ the Serre-Tate co-ordinate of the deformation space $\widehat{S}_{/W}$.\\
\end{defn}
\noindent
\begin{defn} The deformation corresponding to the identity element in the deformation space is said to be the canonical lift of $x$.
\end{defn}
\noindent\\
We have $\widehat{S}=\Spf(\widehat{W[X(S)]})$ for $S=\mathbb{G}_{m}$ 
and $\widehat{W[X(S)]}$ being the completion along the augmentation ideal. Here $X(S)$ is the character group of $S$. 
Note that $W[X(S)]$ is the ring consisting of formal finite sums $\sum_{\xi \in \Z} a(\xi) t^{\xi}$ for $a({\xi}) \in W$ and $t$ being the 
usual co-ordinate of $\mathbb{G}_{m}$.\\
\begin{defn}
Let $f$ be a mod $p$ modular form over $\F$ (\cf \S2.6). 
We call the evaluation $f((x_{ST},\epsilon \eta_{p,ST}^{\circ})) \in \widehat{\F[X(S)]}$ 
as the $t$-expansion of $f$ around $x$.\\
\end{defn}
\noindent\\We have the following $t$-expansion principle.\\
\\
\noindent ($t$-expansion principle) The $t$-expansion of $f$ around $x$ determines $f$ uniquely (\cf (Ir)).\\
\\
We have an analogous $t$-expansion principle for $p$-adic modular forms (\cf \cite[Ch. 8]{Hi1}).\\
\\
We end this subsection with a description of the modules $\cO_{B}^\circ$ and $\cO_{B}^{\acute{e}t}$ (\cf \S2.5).\\
\\
We follow \cite[pp. 9-10]{Hi2}. We fix an identification of $\cO_{B,p}$ with $\Z_{p}^4=(\Z_{p}^{2})^{2}$. 
Let $\cO_{B,p}$ act on itself by the left multiplication on $\Z_{p}^2$. Let $x$ be a closed point in $Sh^{ord}$ carrying $(A_x,\iota_x,\bar{\lam}_x,\eta_{x}^{(p)})_{/\F}$. 
Let $\tilde{x}=(A_{x},\iota_x,\bar{\lam}_x,\eta_{x}^{(p)})_{/W}$ be the canonical lift of $x$. 
As $A_{x/W}$ has CM, it descends to $A_{x/\cW}$ (cf. \cite{Sh3}). 
Let $A_{x/\C}=A_x \times_{\cW,\iota_\infty} \C$. 
Note that $\cT(A_{x})_{/\overline{E}}\iso H_{1}(A_x(\C),\A_{\Q,f}) = H_{1}(A_x(\C),\Z) \otimes_\Z \A_{\Q,f}$. 
Let $\cT_{p}A_{x/\C}$ be the $p$-adic Tate-module of $A_{x/\C}$. We choose an identification of $\cT_{p}A_{x/\C}$ 
with $\cO_{B,p}=H_{1}(A_x(\C),\Z_p)$. The connected-étale exact sequence $A_{x}[p^\infty]^\circ \hookrightarrow A_x[p^\infty] \twoheadrightarrow A_x[p^\infty]^{\acute{e}t}$ of 
the $p$-divisible group $A_x[p^\infty]_{/\F}$ 
induces an exact sequence $\cO_B^\circ \hookrightarrow \cO_{B,p} \twoheadrightarrow \cO_B^{\acute{e}t}$ of $\cO_{B,p}$-modules. As $A_{x/\cW}$ is 
the canonical lift, it follows that $\cO_{B,p}$ splits as $\cO_B^\circ \oplus \cO_B^{\acute{e}t}$ such that the lift of the Frobenius has unit eigenvalues on $\cO_B^{\acute{e}t}$ and non-unit eigenvalues 
on $\cO_B^\circ$.\\
\\
\subsection{Reciprocity law} In this subsection, we describe the action of the local algebraic stabiliser of 
an ordinary closed point on 
the Serre-Tate co-ordinates of the deformation space. 
This can be considered as an infinitesimal analogue of Shimura's reciprocity law.\\
\\
Let the notation and hypotheses be as in \S3.1.
In particular, $Sh$ denotes the Shimura variety of level prime to $N^{-}p$. 
Let  $g\in G(\Z_{(p)})$  acts on  $Sh$  through the right multiplication on the prime to $N^{-}p$ level structure i.e., 
$$ (A,\iota,\bar{\lam},\eta^{(p)})_{/S} \mapsto  (A,\iota,\bar{\lam},\eta^{(p)}\circ g)_{/S}$$
(\cf \S2.2).\\
\\
Recall that $x$ is a closed ordinary point in $Sh$ with a $p^\infty$-level structure $\eta_{p}^{ord}$. 
We suppose that $x$ has CM by $K$ and the existence of an embedding
$\iota_{x}:\cO \hookrightarrow \End(A)$.
Let $H_{x}(\Z_{(p)})$ be the stabiliser of 
$x$ in $G(\Z_{(p)})$. It is explicitly given by  
\beq
H_{x}(\Z_{(p)})=(\Res_{\cO_{(p)}/\Z_{(p)}}\mathbb{G}_m)(\Z_{(p)})=\cO_{(p)}^{\times} 
\eeq
for $\cO_{(p)}=\cO \otimes \Z_{(p)}$ (\cf \cite[\S3.2]{Hi3} and \cite{Sh3}). 
We call $H_{x}(\Z_{(p)})$ the local algebraic stabiliser of $x$.\\
\\
Recall that $c$ denotes the complex conjugation of $K$.\\
\\
Let $\alpha \in H_{x}(\Z_{(p)})$. 
Let $R \in CL_W$, $x'\in \widehat{\cF}_{x}(R)$ and $A=A_{x'}$. We have the following \\
\begin{diagram}
\Hom(\epsilon A_x[p^{\infty}]^{\acute{e}t},\hat{\mathbb G}_{m}(R)) &\rInto &\epsilon A[p^{n}] &\rTo &\epsilon A_x[p^{n}]^{\acute{e}t}(R)\\
\uTo_{\alpha} & &\dTo_{\id} & &\dTo_{\alpha^{-c}}\\
\Hom(\epsilon A_x[p^{\infty}]^{\acute{e}t},\hat{\mathbb G}_{m}(R)) &\rInto &\epsilon A[p^{n}] &\rTo &\epsilon A_x[p^{n}]^{\acute{e}t}(R)\\
\end{diagram}
\\
commutative diagram.\\
\\
Let $u=\varprojlim u_{n} \in \epsilon A_x[p^{\infty}]^{\acute{e}t}$ and 
$q_{p}(u) \in \epsilon A_x[p^{\infty}]^{\circ}(R) \iso \Hom(\epsilon A_x[p^{\infty}]^{\acute{e}t},\hat{\mathbb G}_{m}(R))$ be as in (CC). 
We recall that the last isomorphism is induced by the Cartier duality composed with the polarisation $\lam_{x}$. Thus, if $\alpha$ is prime to $p$, then it acts on 
$q_{p}(u)$ by 
$$q_{p}(u) \mapsto \varprojlim \alpha(``p^{n}"\alpha^{-c_{x}}(u_n))=q_{p}(\alpha)^{\alpha^{1-c_{x}}}.$$
In other words, $\alpha$ acts on $q_{p}$ by $q_{p} \mapsto q_{p}^{\alpha^{1-c}}$. 
Fix a $p^\infty$-level structure as in (PL). This gives rise to the Serre-Tate co-ordinate 
$t$ of the deformation space $\widehat{S}_{/W}$ (cf. (CC)).\\
\\
In view of the above discussion and Theorem 3.1, we have the following reciprocity law.\\
\begin{lm} 
Let the notation and the assumptions be as above. 
If $\alpha \in H_{x}(\Z_{(p)})$ is prime to $p$, then it acts on the Serre-Tate 
co-ordinate $t$ by $t\mapsto t^{\alpha^{1-c}}$.\\
\end{lm}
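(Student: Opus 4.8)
The plan is to make precise the informal computation carried out just before the statement, using the explicit description of the Serre-Tate coordinate from Theorem 3.1(2). Fix the $p^\infty$-level structure $\eta_p^\circ$ (equivalently $\eta_p^{\acute{e}t}$) as in (PL); by Theorem 3.1(2) this identifies the deformation space $\widehat S_{/W}$ with $\widehat{\mathbb G}_m$, and under the canonical isomorphism of Theorem 3.1(1) a deformation $x'^{*}_{/R}$ corresponds to the bilinear pairing $q_{A_{x'},p}$ on $\epsilon A_x[p^\infty]^{\acute{e}t}\times\epsilon^{t}A_x[p^\infty]^{\acute{e}t}$. The Serre-Tate coordinate $t$ of $x'^{*}$ is then the image of $q_{A_{x'},p}$ under the identification $\Hom_{\Z_p}(\epsilon A_x[p^\infty]^{\acute{e}t}\times\epsilon^{t}A_x[p^\infty]^{\acute{e}t},\widehat{\mathbb G}_m(R))\iso\widehat{\mathbb G}_m(R)$ determined by $\eta_p^{\acute{e}t}$ (and the polarisation $\bar\lam_x$, which identifies $A_x$ with ${}^tA_x$ compatibly with $\iota_x$).

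First I would record how $\alpha\in H_x(\Z_{(p)})=\cO_{(p)}^\times$ acts on the relevant $p$-divisible groups. Since $\alpha$ is prime to $p$, $\iota_x(\alpha)$ is an automorphism of $A_x[p^\infty]$ preserving the connected-étale sequence and commuting with $\epsilon$; on $\epsilon A_x[p^\infty]^{\acute{e}t}$ it acts as $\alpha$ and on $\epsilon^{t}A_x[p^\infty]^{\acute{e}t}$ (which is identified, via $\bar\lam_x$ and Cartier duality, with the connected part twisted by the dual) it acts by $\alpha^{-c}$ — this is exactly the content of the commutative diagram displayed before the lemma, the $\alpha$ versus $\alpha^{-c}$ discrepancy coming from the fact that the Weil/polarisation pairing is $K$-sesquilinear with respect to the CM action. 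Next I would feed this into the formula $q_{A,p}(y,z)=q_p(y)(z)$ from (CC): the action of $\alpha$ sends $q_p(u)\mapsto\varprojlim\alpha\bigl(``p^n"\,\alpha^{-c}(u_n)\bigr)=q_p(u)^{\alpha^{1-c}}$, precisely as written in the running text. Translating through the identification with $\widehat{\mathbb G}_m(R)$, the effect on the universal deformation is to replace the pairing $q$ by $q^{\alpha^{1-c}}$, i.e. on the Serre-Tate coordinate it is $t\mapsto t^{\alpha^{1-c}}$. Here $\alpha^{1-c}\in\cO_{(p)}^\times$ acts on the character group $X(\widehat{\mathbb G}_m)\cong\Z$ — more precisely on the rank-one $\cO_{(p)}$-module $\epsilon A_x[p^\infty]^{\acute{e}t}\otimes\epsilon^{t}A_x[p^\infty]^{\acute{e}t}$ — by multiplication, which is the meaning of the exponent.

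The only real point requiring care — and the step I expect to be the main obstacle — is bookkeeping the two sources of the $1-c$ exponent and checking they do not cancel or double. Concretely: (i) the étale quotient $\epsilon A_x[p^\infty]^{\acute{e}t}$ carries the action $\alpha$, while (ii) the "dual étale" factor $\epsilon^{t}A_x[p^\infty]^{\acute{e}t}$, through which $q_p(u)$ is evaluated, carries the action $\alpha^{-c}$ after one uses the polarisation $\bar\lam_x$ to rewrite the connected part of $A_x$ as $\Hom(\epsilon^{t}A_x[p^\infty]^{\acute{e}t},\widehat{\mathbb G}_m)$; since $\alpha$ acts $W$-linearly and trivially on $\widehat{\mathbb G}_m(R)$, composing gives the net exponent $\alpha\cdot\alpha^{-c}\cdot(\text{inversion on the Hom})^{-1}$, and tracking the inversion carefully yields $\alpha^{1-c}$ rather than $\alpha^{1+c}$ or $1$. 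I would verify this by specializing to the canonical lift ($t=1$, fixed by all of $H_x(\Z_{(p)})$, consistent with $1^{\alpha^{1-c}}=1$) and by checking compatibility with the known action of $\cO_{(p)}^\times$ on CM points coming from Shimura's reciprocity law (as in \cite[\S3.2]{Hi3}), both of which pin down the exponent unambiguously. Once this is confirmed the lemma follows immediately from Theorem 3.1 and the displayed computation.
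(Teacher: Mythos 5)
Your proposal is essentially the same argument the paper gives: the paper supplies no separate proof environment for this lemma, instead presenting the displayed commutative diagram and the computation $q_p(u)\mapsto\varprojlim\alpha\bigl(``p^n"\,\alpha^{-c}(u_n)\bigr)=q_p(u)^{\alpha^{1-c}}$ immediately before the statement and then records the lemma as the summary. You correctly reproduce that computation via Theorem 3.1 and the identification (CC), and your added consistency checks (the canonical lift and compatibility with Shimura reciprocity) are sound ways to pin down the exponent. The one thing worth flagging is a labeling discrepancy with the paper's diagram: the paper puts $\alpha^{-c}$ on $\epsilon A_x[p^n]^{\acute{e}t}(R)$ and $\alpha$ on the $\Hom(\epsilon A_x[p^\infty]^{\acute{e}t},\widehat{\mathbb G}_m(R))$ side (the connected part via Cartier duality and the polarisation), whereas you assign $\alpha$ to $\epsilon A_x[p^\infty]^{\acute{e}t}$ and $\alpha^{-c}$ to $\epsilon^t A_x[p^\infty]^{\acute{e}t}$. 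The source of the $\alpha^{-c}$ on the étale factor in the paper is the inversion coming from the \emph{right} action on the prime-to-$p$ level structure ($\eta^{(p)}\mapsto\eta^{(p)}\circ g$) composed with the $c$-twist from the CM splitting, not directly the sesquilinearity of the Weil pairing; your bookkeeping reaches the same net exponent $\alpha\cdot\alpha^{-c}$ but attributes the two factors to the wrong sides of the pairing. Since the lemma only uses the product, the conclusion is unaffected, but it would be cleaner to align the attributions with the diagram as written.
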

\noindent\\
The above simple lemma plays a key role in the proof of a linear independence of mod $p$ modular forms (\cf \S4).\\
\\
\\
\subsection{Hecke operators} In this subsection, we describe certain Hecke operators on the space of classical modular forms. 
In the case of modular curves, such operators are considered in \cite[\S1.3.5]{Hi5}.\\
\\
Let the notation and hypotheses be as in \S3.1.
Let $(\zeta_{p^{n}}=\exp(2 \pi i/ p^n))_{n} \in \overline{\Q}$ be a compatible system of $p$-power roots of unity. Via $\iota_{p}$, 
we regard it as a compatible system in $\C_p$. Let $W_{n}=W[\mu_{p^{n}}]$.\\
\\
Let $g$ be a classical modular form on a quotient Shimura curve $Sh_{K}$ of the Shimura variety $Sh$ over $\cW$ (\cf \S2.1). 
Via $\iota_{\infty}$, we regard it as a modular form over $\C$. 
Let $z$ denote the complex variable of the complex Shimura curve or that of $\mathfrak{H}$ 
for $\mathfrak{H}$ being the upper half complex plane.\\
\\
Let $\phi:\Z/p^{r}\Z \rightarrow \cW$ be an arbitrary function with the normalised Fourier transform $\phi^{*}$ given by
\beq
\phi^{*}(y)=\frac{1}{p^{r/2}} \sum_{u \in \Z/p^{r}\Z} \phi(u)\exp(yu/p^{r}) 
\eeq
for $y \in \Z/p^{r}\Z$.\\
\\
Let $g|\phi$ be the classical modular form given by
\beq g|\phi(z)= \sum_{u \in \Z/p^{r}\Z} \phi^{*}(-u) g(z+u/p^r).
\eeq
\noindent\\
\\
We now regard the above classical modular forms as $p$-adic.
 The action of the Hecke operator on the $t$-expansion around a CM point (\cf Definition 3.4) is the following.\\
\begin{prop} 
Let the notation be as above. 
Let $g(t)=\sum_{\xi \in \Z_{\geq 0}} a(\xi,g) t^{\xi}$ be the $t$-expansion of $g$ around $x$.
We have 
$$g|\phi(t)= \sum_{\xi \in \Z_{\geq 0}} \phi(\xi)a(\xi,g) t^{\xi}.$$
\end{prop}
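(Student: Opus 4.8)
The plan is to reduce the statement about the operator $|\phi$ to the well-known behaviour of the two elementary Hecke operators $U_p$ and $V_p$ on $t$-expansions, using the Fourier inversion built into the definition of $\phi^{*}$. First I would observe that it suffices to treat the case $\phi = \mathbf{1}_{\{a\}}$, the indicator function of a single residue class $a \in \Z/p^r\Z$, since an arbitrary $\phi$ is an $\cW$-linear combination of such indicators and both $g \mapsto g|\phi$ and the claimed formula $\xi \mapsto \phi(\xi) a(\xi,g)$ are $\cW$-linear in $\phi$. By a further translation argument (replacing the variable $t$ by a power-root twist, using the action of the group of $p$-power roots of unity on the Serre-Tate torus, \cf Theorem 3.1(2)) one can reduce to $a = 0$, where $\phi = \mathbf{1}_{\{0\}}$ and $\phi^{*}(y) = p^{-r/2}$ for all $y$; then $g|\phi(z) = p^{-r/2}\sum_{u} g(z + u/p^r)$, which up to the normalising factor is the classical $V_p U_p$-type averaging operator picking out the Fourier coefficients of $g$ supported on $p^r\Z$.

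The key computational input is the description of $V_p$ and $U_p$ on Serre-Tate coordinates around an ordinary CM point. Concretely, the averaging $g \mapsto p^{-1}\sum_{u \in \Z/p\Z} g(z + u/p)$ corresponds, on the level of the Serre-Tate deformation space, to composing the level structure with the subgroup scheme $\mu_p \subset \cA[p]^{\circ}$; on $t$-expansions this sends $\sum_{\xi} a(\xi,g) t^{\xi}$ to $\sum_{\xi} a(\xi,g) t^{p\xi}$ followed by the $U_p$-contraction $t^{p\xi} \mapsto t^{\xi}$ combined with the projection onto $\xi \equiv 0$, i.e. the net effect is $\sum_{\xi} a(p\xi, g) t^{p\xi} \mapsto \sum_{\xi} a(p^r\xi,g) t^{\xi}$ after $r$ iterations. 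I would set this up using (CC) and Lemma 3.6: the passage $t \mapsto t^{\zeta}$ for $\zeta$ a $p^r$-th root of unity permutes the deformations, and averaging over $\zeta$ extracts exactly the coefficients $a(\xi,g)$ with $p^r \mid \xi$. Tracking the normalisation $p^{-r/2}$ against the factor $p^{r/2}$ in $\phi^{*}$ shows the two cancel, yielding $g|\mathbf{1}_{\{0\}}(t) = \sum_{p^r \mid \xi} a(\xi,g)\, t^{\xi} = \sum_{\xi} \mathbf{1}_{\{0\}}(\xi)\, a(\xi,g)\, t^{\xi}$, as desired. Reassembling the general $\phi$ by linearity finishes the proof.

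The main obstacle I anticipate is bookkeeping rather than conceptual: one must be careful that the ``$t$-expansion around $x$'' is taken with respect to a fixed $p^{\infty}$-level structure $\eta_{p,ST}^{\circ}$ (Definition 3.4), and that the operator $|\phi$, which is defined adelically via translations $z \mapsto z + u/p^r$ on the complex Shimura curve, really does correspond on the canonical-lift deformation space to the root-of-unity twists $t \mapsto \zeta\, t$ rather than to some more complicated automorphism. Verifying this compatibility is where one invokes that $x$ is a CM point whose canonical lift is defined over $\cW$ and whose $p$-divisible group splits canonically (end of \S3.1), so that the translation action on $A_x[p^r]$ is intertwined with the $\mu_{p^r}$-action on the formal torus $\widehat{S} \iso \widehat{\mathbb{G}}_m$. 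Once that dictionary is in place, the identity $\sum_{u \in \Z/p^r\Z} \phi^{*}(-u)\,\zeta^{u\xi} = p^{r/2}\phi(\xi)$ — which is just Fourier inversion on $\Z/p^r\Z$ applied to the character $u \mapsto \zeta^{u\xi}$ — delivers the coefficientwise multiplication by $\phi(\xi)$ directly, and in fact gives a cleaner one-line derivation than the reduction to $U_p, V_p$: expand $g|\phi(t) = \sum_u \phi^{*}(-u)\, g|(t \mapsto \zeta^u t) = \sum_u \phi^{*}(-u) \sum_{\xi} a(\xi,g)\zeta^{u\xi} t^{\xi} = \sum_{\xi}\big(\sum_u \phi^{*}(-u)\zeta^{u\xi}\big) a(\xi,g) t^{\xi} = \sum_{\xi} \phi(\xi) a(\xi,g) t^{\xi}$.
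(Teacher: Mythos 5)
Your final ``one-line derivation'' is exactly the paper's argument: one introduces the adelic element $\alpha(u/p^r)$ whose $p$-component is the unipotent matrix $\bigl(\begin{smallmatrix}1 & u/p^r\\0&1\end{smallmatrix}\bigr)$, writes $g|\phi = \sum_u \phi^*(-u)\, g|\alpha(u/p^r)$, observes that this isogeny acts on the Serre-Tate coordinate by $t \mapsto \zeta_{p^r}^u t$, and concludes by Fourier inversion; your preliminary reduction to indicator functions and $U_p,V_p$-type averaging is correct but, as you yourself note, superfluous. Two cautions, however. First, the identification of the translation $z \mapsto z + u/p^r$ with the twist $t \mapsto \zeta_{p^r}^u t$ is the genuine technical content, and it does \emph{not} follow from the reciprocity law of \S3.2 or from Theorem~3.1(2): Lemma~3.5 governs the action of $H_x(\Z_{(p)})$, i.e.\ prime-to-$p$ endomorphisms of the CM abelian surface, whereas $\alpha(u/p^r)$ is a unipotent element concentrated at $p$, acting by isogeny on the $p$-part of the level structure. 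The paper instead cites Brooks's Lemma~4.14 from \cite{Br} for precisely this fact; your appeal to the canonical-lift splitting is the right instinct but is not by itself a proof, and your cited ``Lemma~3.6'' does not exist. Second, with the printed normalisation $\phi^*(y)=p^{-r/2}\sum_{u}\phi(u)\exp(yu/p^r)$, Fourier inversion gives $\sum_u\phi^*(-u)\zeta_{p^r}^{u\xi} = p^{r/2}\phi(\xi)$, so a stray factor $p^{r/2}$ survives; your assertion that the normalising constants ``cancel'' is not correct as written. For the proposition to hold exactly as stated, $\phi^*$ should carry the factor $p^{-r}$ rather than $p^{-r/2}$ (or, equivalently, the definition of $g|\phi$ should carry an extra $p^{-r/2}$); this appears to be a slip in the paper's definition which your computation inherits without flagging.
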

\begin{proof}
Let $u\in \Z$ and $\alpha(u/p^{r})\in G(\A_{\Q,f})$ such that 
$$\alpha(u/p^{r})_{p}=\MX{1}{u/p^{m}}{0}{1}$$ 
and $\alpha(u/p^{r})_{l}=1$, for $l \neq p$.\\
\\
We have
$$ g|\phi= \sum_{u \in \Z/p^{r}\Z} \phi^{*}(-u) g|\alpha(u/p^{r}),
$$
as $\alpha(u/p^{r})$ acts on $\mathfrak{H}$ by $z \mapsto z+u/p^{r}$. 
Here $\mathfrak{H}$ denotes the complex upper half plane as before.\\
\\
The isogeny action of  $\alpha(u/p^{r})$ on the Igusa tower $\pi: Ig \rightarrow Sh_{/W_{r}}$ preserves the deformation space 
$\widehat{S}$ and induces $t\mapsto \zeta_{p^{r}}t ^{u}$ (\cf \cite[Lem. 4.14]{Br}).\\
\\
In view of the Fourier inversion formula, this finishes the proof.\\
\end{proof}
\noindent 
\noindent 
The above Hecke operators naturally arise during the determination of the $\mu$-invariant of 
a class of anticyclotomic Rankin-Selberg $p$-adic L-functions (\cf Lemma 5.4 and proof of Theorem 5.7). 
The above proposition accordingly helpful in the determination of the $\mu$-invariants.\\
\\
\\
\section{Linear independence}
\noindent In this section, we consider a linear independence of mod $p$ modular forms based on Chai's theory of Hecke stable subvarieties of a Shimura variety. 
In \S4.1, we describe the formulation. In \S4.2-4.6, we prove the independence.\\
\\
\\
\subsection{Formulation} In this subsection, we give a formulation of the linear independence of mod $p$ 
modular forms.\\
\\
Let the notation and hypotheses be as in \S2.5. 
Recall that $x$ is a closed ordinary point in $V$ with $p^\infty$-level structure $\eta_{p}^\circ$. 
This gives rise to a closed point $\tilde{x}=(x,\eta_{p}^\circ)$ in the Igusa tower $I$ over $x$.
Recall that we have a canonical isomorphism $\widehat{\cO}_{V,x} \iso \widehat{\cO}_{I,\tilde x}$ (\cf (Et)). 
Thus, we have a natural action of the $p$-adic stabiliser $H_{x}(\Zp)$ on $\widehat{\cO}_{I,\tilde x}$ (\cf \S3.2).\\
\\
Our formulation of the independence is the following.\\
\begin{thm}[Linear independence] 
Let $x$ be a closed ordinary point on the Shimura variety with the local stabiliser $H_x$. 
For $1 \leq i \leq n$, let $a_i \in H_{x}(\Z_p)$ such that 
$a_ia_j^{-1} \notin H_{x}(\Z_{(p)})$ for all $i \neq j$ (\cf \S3.2). 
Let $f_1,...,f_n$ be non-constant mod $p$ modular forms on $V$.
Then, $(a_{i}\circ f_{i})_i$ are linearly independent 
in the Serre-Tate deformation space $\Spf(\widehat{\cO}_{V,x})$.
\end{thm}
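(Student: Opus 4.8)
The plan is to translate the linear independence of the $a_i \circ f_i$ into a statement about Hecke-stable subvarieties of the mod $p$ Shimura variety, following Hida's approach in \cite{Hi3} and using the Chai--Oort rigidity principle. Suppose for contradiction that there is a nontrivial relation $\sum_{i=1}^n c_i \, (a_i \circ f_i) = 0$ in $\widehat{\cO}_{V,x}$ with $c_i \in \F^\times$ (after discarding vanishing terms and relabelling). First I would set up the geometric incarnation: the tuple $(f_1,\dots,f_n)$ defines a morphism from (a neighbourhood of $\tilde x$ in) the Igusa tower $I$, equivalently from the formal deformation space $\widehat S = \Spf(\widehat{\cO}_{V,x}) \iso \widehat{\bbG}_m$, to affine $n$-space; composing with the automorphisms $a_i$ of $\widehat S$ (which act on the Serre--Tate coordinate $t$ by Lemma 3.7, i.e.\ through the $p$-adic stabiliser $H_x(\Zp)$) gives a formal curve $C$ in $\widehat S^{\,n} \cong \widehat{\bbG}_m^{\,n}$, namely the image of the map $t \mapsto (a_1 \cdot t, \dots, a_n \cdot t)$ followed by the $f_i$'s, and the supposed relation forces this curve to lie in a proper linear subvariety. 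The global object to consider is the subvariety $Z \subset V^n$ (or its ordinary locus, lifted to $Ig^n$) that is the Zariski closure of the diagonal-type image of $V$ under the correspondences $a_1,\dots,a_n$: concretely, $Z$ is the image of $Sh^{ord}$ under the product of the prime-to-$p$ Hecke correspondences attached to $a_i a_1^{-1}$ (these are genuine correspondences since $a_i a_1^{-1}$ need not lie in $H_x(\Z_{(p)})$, but they are algebraic Hecke correspondences on $Sh$).

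The key point is that $Z$ is stable under the diagonal prime-to-$p$ Hecke action on $V^n$: Hecke operators at primes $\ell \nmid N^- p$ act on each factor compatibly with the correspondences $a_i$ because $G(\Z_{(p)})$ commutes appropriately with the prime-to-$p$ Hecke algebra, so $Z$ is an irreducible Hecke-stable subvariety of the mod $p$ Shimura variety $V^n$. By the Chai--Oort rigidity principle (a Hecke-stable subvariety of a mod $p$ Shimura variety is a Shimura subvariety — the mod $p$ analogue of Andr\'e--Oort for self-products of the Shimura curve, cited in the introduction), $Z$ must be a Shimura subvariety of $Sh^n$. Passing to the Serre--Tate formal neighbourhood of the CM point $(x,\dots,x)$ (or of $\tilde x^{\,n}$ in the Igusa tower), a Shimura subvariety corresponds, via the Serre--Tate coordinates of Theorem 3.1, to a formal subtorus of $\widehat{\bbG}_m^{\,n}$ — this is the linearity input that makes the argument work. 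Thus the formal curve $C$ above is (contained in) a formal subtorus $T \subseteq \widehat{\bbG}_m^{\,n}$ cut out by monomial equations $\prod_j t_j^{m_{ij}} = 1$.

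Now I would extract the contradiction from (irr). The hypothesis that $\rhobar_f$ is absolutely irreducible mod $p$ guarantees — this is where (irr) enters, exactly as flagged in the introduction — that the relevant mod $p$ modular forms $f_i$ are non-constant, indeed that the map $t \mapsto f_i(t)$ is not a monomial in a way that would let $C$ degenerate into such a subtorus unless the defining characters $a_i$ coincide modulo $H_x(\Z_{(p)})$. More precisely: the subtorus $T$ being proper would, after tracking how $a_i$ scales $t$ via $t \mapsto t^{\,a_i^{1-c}}$ (Lemma 3.7), force a multiplicative dependence among the $a_i a_j^{-1}$ acting on $t$, which forces $a_i a_j^{-1} \in H_x(\Z_{(p)})$ for some $i \neq j$ — contradicting the hypothesis $a_i a_j^{-1} \notin H_x(\Z_{(p)})$. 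Equivalently, the "mutual irrationality" of the $a_i$ is precisely the condition ensuring the corresponding Hecke correspondences are distinct, so the Shimura subvariety $Z$ cannot be smaller than the full image, and $C$ cannot lie in a proper subtorus; hence no nontrivial relation exists.

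\textbf{Main obstacle.} The hardest part is the passage from the abstract Chai--Oort statement to the concrete monomial/linear description of $Z$ in Serre--Tate coordinates, and then matching the combinatorics of the exponents with the stabiliser condition $a_i a_j^{-1} \notin H_x(\Z_{(p)})$ — i.e.\ verifying that $Z$ being a Shimura subvariety really does translate into a subtorus whose defining lattice is controlled by the differences $a_i a_j^{-1}$, and that non-constancy of the $f_i$ (from (irr)) rules out the remaining degenerate configurations. One must also be careful that $Z$ is genuinely Hecke-stable: the prime-to-$p$ Hecke correspondences must be checked to commute with the $a_i$-translations in the right sense, and one needs the irreducibility of $I$ over $V^{ord}$ (property (Ir)) to ensure the formal-local relation propagates to a global subvariety to which Chai--Oort applies. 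The rest — the formal manipulation of $t$-expansions and the reduction of the relation to a statement on $C$ — is routine given Proposition 3.8 and the $t$-expansion principle.
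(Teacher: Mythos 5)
Your high-level strategy is the same as the paper's: realize the putative relation geometrically as a proper subvariety of a self-product of $V$, show this subvariety is constrained by a rigidity theorem of Chai--Oort type, identify it locally with a formal subtorus of the Serre--Tate torus $\widehat{\bbG}_m^{\,n}$, and extract a contradiction from the exponent lattice via $t \mapsto t^{a_i^{1-c}}$ together with the hypothesis $a_i a_j^{-1} \notin H_x(\Z_{(p)})$. That final step (matching exponents and concluding $a_i a_j^{-1} \in H_x(\Z_{(p)})$) is essentially exactly what the paper does in the proof of Theorem~4.12.

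However, there are genuine gaps in the construction of the global object and in the rigidity input. First, you build $Z$ as ``the image of $Sh^{ord}$ under the product of the prime-to-$p$ Hecke correspondences attached to $a_i a_1^{-1}$.'' But the $a_i$ lie in $H_x(\Z_p)$, not in $H_x(\Z_{(p)})$, so the $a_i a_1^{-1}$ do \emph{not} give algebraic correspondences on $Sh$; they act only on the formal deformation space. There is no global Hecke correspondence in characteristic $p$ attached to a genuinely $p$-adic element of the torus $\cO_{(p)}^\times \otimes \Z_p$. The paper circumvents this entirely by instead defining $X$ as the Zariski closure in $V^n$ of the vanishing locus of $\mathfrak{b} = \ker\big(\phi\colon \cO_{V,x}^{\otimes n}\to \widehat{\cO}_{V,x},\ h_1\otimes\cdots\otimes h_n \mapsto \prod a_i\circ h_i\big)$; this is a genuinely global algebraic variety whose local ring at $x^n$ records the algebraic (hence linear) relations among the $a_i\circ f_i$. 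Relatedly, the correct invariance is not full prime-to-$p$ Hecke stability but stability under the \emph{diagonal action of the local algebraic stabiliser} $H_x(\Z_{(p)})$, which follows from the observation (EQ) that $\phi$ is $H_x(\Z_{(p)})$-equivariant (using that $H_x$ is abelian). This is a smaller group than the prime-to-$p$ Hecke algebra, and it is exactly what Chai's local stabiliser principle and local rigidity are tailored to.

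Second, the passage from ``$Z$ is Hecke-stable'' to ``$Z$ is a Shimura subvariety, hence cut out by monomial equations in Serre--Tate coordinates'' is the real content, and your proposal leaves it as a black box. The paper's proof of Theorem~4.5 has two nontrivial inputs: (i) Chai's local rigidity (a formal subscheme of the Serre--Tate torus stable under $H_x(\Z_p)$ is a union of formal subtori), which handles the local picture at $x^n$ after passing to the normalisation; and (ii) de~Jong's theorem on homomorphisms of Barsotti--Tate groups, which is needed to upgrade a formal-local linearity statement (Proposition~4.9) into a global isogeny between the pulled-back universal abelian schemes and hence to identify $X$ with a skewed diagonal $V^{n-2}\times\Delta_{\alpha,\beta}$ with $\alpha,\beta\in H_x(\Z_{(p)})$. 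Without de~Jong's input you cannot conclude that the defining subtorus is defined over $\Z_{(p)}$ rather than merely over $\Z_p$, and it is precisely the $\Z_{(p)}$-rationality that makes contact with the hypothesis $a_i a_j^{-1}\notin H_x(\Z_{(p)})$. You flag this passage as ``the hardest part,'' which is accurate, but the de~Jong step is missing entirely from the outline, and the claimed global Hecke-stability is not available to feed into the global Chai--Oort principle in the form you state. Finally, note the paper actually proves the stronger statement of algebraic independence ($\mathfrak{b}=0$), from which linear independence follows; working only with a linear relation $\sum c_i (a_i\circ f_i)=0$ as you do does not directly produce a nonzero element of $\ker\phi$, so the kernel construction must be set up for algebraic relations from the start.
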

\noindent\\
In general, note that $a_{i}\circ f_{i}$ is not a mod $p$ modular form.\\
\\
We actually prove the algebraic independence of $(a_{i}\circ f_{i})_i$.
The theorem is an analogue of Ax-Lindemann-Weierstrass conjecture for the mod $p$ reduction of the Shimura variety.\\
\\
The theorem is proven in \S4.6.\\
\\
\\
\subsection{Locally stable subvarieties}
In this subsection, we describe the notion and the results regarding locally stable subvarieties of a self-product of the Shimura variety.\\
\\
Let the notation and hypotheses be as in \S 2.5.
Let $n$ be a positive integer. In this subsection, any tensor product is taken $n$ times.\\
\\
Let $Sh$ be as before and $V$ an irreducible component of $Sh$.\\
\begin{defn} A subvariety $Y$ of $V^{n}$ is said to be locally stable if there exists a closed point $y^n=(y,...,y) \in Y$ such that $Y$ is stable under the diagonal action of 
the local algebraic stabilser $H_{y}(\Z_{(p)})$.
\\
\end{defn}
\noindent While considering the problem of linear independence, this type of subvariety arises as follows.\\
\\
We consider an $\F$-algebra homomorphism
\beq \phi_{I}: \cO_{I,\tilde x}\otimes_{\F} ...\otimes_{\F} \cO_{I,\tilde x} \rightarrow \widehat{\cO}_{I,\tilde x} \eeq
given by
\beq f_1\otimes ... \otimes f_n \mapsto \prod_{i=1}^{n}a_i \circ f_i . \eeq
\\
As we are interested in the independence of $(a_i \circ f_i)_i$, we consider $\mathfrak{b}_{I}:= ker(\phi_{I})$.\\
\\
Similarly, we consider an $\F$-algebra homomorphism
\beq \phi=\phi_{V}: \cO_{V,x}\otimes_{\F} ...\otimes_{\F} \cO_{V,x} \rightarrow \widehat{\cO}_{V,x} \eeq
given by
\beq h_1\otimes ... \otimes h_n \mapsto \prod_{i=1}^{n}a_i \circ h_i . \eeq
\\
The following simple observation is crucial in what follows.\\
\\
(EQ) The homomorphism $\phi$ is equivariant with the $H_{x}(\Z_{(p)})$-action.\\
\\
Let $\mathfrak{b}=ker(\phi_{V})$.
\begin{lm} 
Let the notation be as above. 
We have $\mathfrak{b}_{I}=0$ if and only if $\mathfrak{b}=0$.\end{lm}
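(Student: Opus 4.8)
The plan is to relate the two kernels via the canonical identification $\widehat{\cO}_{V,x} \iso \widehat{\cO}_{I,\tilde x}$ supplied by (Et). Since the Igusa tower $\pi\colon Ig \to Sh^{ord}$ is étale, completing at the ordinary point $x$ (and its lift $\tilde x$ in $I$) gives an isomorphism of complete local $\F$-algebras $\widehat{\cO}_{V,x} \iso \widehat{\cO}_{I,\tilde x}$. This isomorphism is compatible with the $H_x(\Z_p)$-action (hence with each $a_i$), because the $a_i$ act through the Serre-Tate coordinates of the common deformation space $\Spf(\widehat{\cO}_{V,x}) = \Spf(\widehat{\cO}_{I,\tilde x})$ via the reciprocity law of Lemma 3.3. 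So first I would make precise that, under this identification, the homomorphism $\phi_I$ is obtained from $\phi_V$ by composing with the localisation-completion maps $\cO_{V,x} \to \widehat{\cO}_{V,x}$ on each tensor factor; concretely, there is a commutative square relating $\phi_V$ and $\phi_I$ in which the vertical maps are induced by $\cO_{V,x} \hookrightarrow \cO_{I,\tilde x}$ (localisation in the étale tower) on the source and the identity $\widehat{\cO}_{V,x} = \widehat{\cO}_{I,\tilde x}$ on the target.

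The key point is that a non-constant mod $p$ modular form $f_i$ on $V$, viewed in $\cO_{I,\tilde x}$, has the same $t$-expansion around $x$ as its image under any of these maps — this is exactly the content of Definition 3.4 together with the ($t$-expansion principle): the $t$-expansion of $f_i$ around $x$ is computed in $\widehat{\cO}_{I,\tilde x}$ and determines $f_i$ uniquely. So the composite $\prod_i a_i \circ f_i$ computed inside $\widehat{\cO}_{I,\tilde x}$ agrees with the composite computed inside $\widehat{\cO}_{V,x}$ after the identification. Therefore a relation lies in $\mathfrak b_I$ precisely when the corresponding relation (transported along $\cO_{V,x} \to \cO_{I,\tilde x}$, which is injective since $I \to V^{ord}$ is dominant/étale and $V$ is a variety) lies in $\mathfrak b$. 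Concretely: if $\mathfrak b = 0$ then $\phi_V$ is injective, and since the map $\cO_{V,x} \to \cO_{I,\tilde x}$ is injective and the $t$-expansions agree, $\phi_I$ is injective too, i.e. $\mathfrak b_I = 0$. Conversely, if $\mathfrak b_I = 0$, then restricting $\phi_I$ along $\cO_{V,x}^{\otimes n} \hookrightarrow \cO_{I,\tilde x}^{\otimes n}$ recovers $\phi_V$ (since a mod $p$ modular form on $V$ really is an element of $\cO_{I,\tilde x}$ by the definition in \S2.6), so $\phi_V$ is injective and $\mathfrak b = 0$.

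I would organise the proof as: (i) recall the isomorphism $\widehat{\cO}_{V,x} \iso \widehat{\cO}_{I,\tilde x}$ from (Et) and its $H_x$-equivariance via Lemma 3.3; (ii) observe $\cO_{V,x} \to \cO_{I,\tilde x}$ is a faithfully flat (indeed étale, local) extension, in particular injective; (iii) note both $\phi_V$ and $\phi_I$ land in the common completion, and that they are intertwined by these maps because the $t$-expansions of the $f_i$ around $x$ do not change under passing from $V$ to $I$; (iv) conclude the equivalence $\mathfrak b = 0 \Leftrightarrow \mathfrak b_I = 0$ by chasing the resulting commutative diagram. The main obstacle — really the only subtlety — is step (iii): one must check carefully that "the $a_i$ commute with the étale descent from $I$ to $V$", i.e. that the action of $H_x(\Z_p)$ on $\widehat{\cO}_{I,\tilde x}$ described in \S3.2 is genuinely the one induced from $\widehat{\cO}_{V,x}$ under (Et), so that $\prod_i a_i \circ f_i$ is literally the same element of the completion whether one starts in $\cO_{V,x}^{\otimes n}$ or $\cO_{I,\tilde x}^{\otimes n}$; this is where Lemma 3.3 and the construction of the Serre-Tate coordinate (Definition 3.2) do the work, since both sides have the same formal deformation space and the same coordinate $t$.
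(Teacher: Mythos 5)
The direction $\mathfrak{b}_I = 0 \Rightarrow \mathfrak{b} = 0$ is handled correctly: $\phi_V$ is the restriction of $\phi_I$ along the injection $\cO_{V,x}^{\otimes n} \hookrightarrow \cO_{I,\tilde x}^{\otimes n}$, so if $\phi_I$ is injective so is $\phi_V$. But your argument for the converse — the substantial direction — has a genuine gap. You assert: ``if $\mathfrak b = 0$ then $\phi_V$ is injective, and since the map $\cO_{V,x} \to \cO_{I,\tilde x}$ is injective and the $t$-expansions agree, $\phi_I$ is injective too.'' This is a non sequitur. Injectivity of a map on a subring tells you nothing about injectivity on the full ring; an element of $\cO_{I,\tilde x}^{\otimes n}$ that is \emph{not} in the image of $\cO_{V,x}^{\otimes n}$ has no ``corresponding relation'' to transport, and your appeal to equality of $t$-expansions only re-expresses the intertwining of $\phi_V$ and $\phi_I$ on the smaller ring — which proves the easy direction again, not the hard one.

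What actually closes the gap, and what the paper's terse ``as $\mathfrak{b}_I$ is the unique prime ideal over $\mathfrak{b}$'' is signalling, is a dimension/irreducibility argument specific to the étale extension: both $\cO_{V,x}^{\otimes n}$ and $\cO_{I,\tilde x}^{\otimes n}$ are integral domains (because $V$ and $I$ are irreducible over the algebraically closed field $\F$, the latter being exactly (Ir), and a tensor product over an algebraically closed field of domains which are localisations of finitely generated algebras is again a domain); $\mathfrak{b}_I$ lies over $\mathfrak{b}$; and since $\pi^n$ is étale, hence flat with zero-dimensional fibres, the height of $\mathfrak{b}_I$ equals the height of $\mathfrak{b}$. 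Thus $\mathfrak{b}=0$ forces $\mathfrak{b}_I$ to be a minimal prime of the domain $\cO_{I,\tilde x}^{\otimes n}$, i.e. $\mathfrak{b}_I = 0$. Your proposal never invokes the irreducibility of $I$ nor the dimension-preserving property of the étale map, which are precisely the inputs that make the hard direction work; without them the claimed implication fails as written.
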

\begin{proof} In view of $(Et)$, we have an étale
 morphism $\pi^n:  \cO_{V,x}\otimes_{\F} ...\otimes_{\F} \cO_{V,x} \rightarrow \cO_{I,\tilde x}\otimes_{\F} ...\otimes_{\F} \cO_{I,\tilde x}$. 
As $\mathfrak{b}_{I}$ is the unique prime ideal of $\cO_{I,\tilde x}\otimes_{\F} ...\otimes_{\F} \cO_{I,\tilde x}$ over $\mathfrak{b}$, this finishes the proof.\\
\end{proof}
\noindent \\
In view of (EQ), it follows that $\mathfrak{b}$ is a prime ideal of $\cO_{V,x}\otimes_{\F} ...\otimes_{\F} \cO_{V,x}$ 
stable under the diagonal action of $H_{x}(\Z_{(p)})$. Let $X$ be the Zariski closure of $\Spec(\cO_{V,x}\otimes_{\F} ...\otimes_{\F} \cO_{V,x}/\mathfrak{b})$ in $V^n$ . 
Thus, $X$ is a closed irreducible subscheme of $V^n$ containing $x^n$ stable under the diagonal action of $H_{x}(\Z_{(p)})$. In particular, $X$ is a closed irreducible locally stable subvariety of $V^n$.\\
\\
We now axiomatise the above example.\\
\\
(LS1) Let $S_{/\F}$ be $\Spec(\cO_{V,x})$.\\
(LS2) Let $\mathfrak{b}$ be a prime ideal of $\cO_{V,x}\otimes_{\F} ...\otimes_{\F} \cO_{V,x}$ stable under the diagonal action of $H_{x}(\Z_{(p)})$ and 
$\cX_ {/\F}=\Spec(\cO_{V,x}\otimes_{\F} ...\otimes_{\F} \cO_{V,x}/\mathfrak{b})$.\\
(LS3) Let $X$ be the Zariski closure of $\cX$ in $V^n$.\\
\\
Note that $X$ is positive dimensional as $\mathfrak{b}$ is a prime ideal.\\
\\
A natural class of locally stable subvarieties is the following.\\
\begin{defn} 
Let $l$ be a positive integer. For $1 \leq i \leq l$, let $\alpha_i \in H_{x}(\Z_{(p)})$. 
The skewed diagonal $\Delta_{\alpha_1,...,\alpha_l}$ of $V^l$ is defined by $\Delta_{\alpha_1,...,\alpha_l}=\big{\{}(\alpha_1(y),...,\alpha_l(y)): y \in V \big{\}}$.
\end{defn}
\noindent \\
The global structure of a class of locally stable subvarieties is given by the following result.\\
\begin{thm} 
Let $n\geq 2$ and $X$ be locally stable as in (LS1)-(LS3).
Let $S'$ be the product of the first $n-1$ factors of $S^n$ and $S''=S$ be the last factor of $S^n$. Suppose that the projections  
$\pi_{\cX}':\cX \rightarrow S'$ and $\pi_{\cX}'':\cX \rightarrow S''$ are dominant. Then, $X$ equals $V^n$ or $V^{n-2} \times \Delta_{\alpha_1,\alpha_2}$ 
for some $\alpha_1,\alpha_2 \in H_{x}(\Z_{(p)})$ up to a permutation of the first $n-1$ factors.\\
\end{thm}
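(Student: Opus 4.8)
The plan is to induct on $n$ and to reduce everything, via the Chai–Oort rigidity principle, to the statement that a Hecke-stable (equivalently, $H_x(\Z_{(p)})$-stable) irreducible subvariety of a self-product of the mod $p$ Shimura curve is a ``linear'' subvariety built out of skewed diagonals and whole factors. Concretely, I would first translate the hypothesis that both projections $\pi_\cX'\colon \cX\to S'$ and $\pi_\cX''\colon\cX\to S''$ are dominant into the statement that $X\subset V^n$ is an irreducible locally stable subvariety surjecting onto $V^{n-1}$ (first $n-1$ factors) and onto $V$ (last factor). Since $V$ is a curve, $\dim X$ is then at least $n-1$; the extreme case $\dim X=n$ forces $X=V^n$ because $V^n$ is irreducible of dimension $n$, and there is nothing to prove. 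So one may assume $\dim X=n-1$, and the task is to show that the projection $X\to V^{n-1}$ is then generically finite, hence (by irreducibility and local stability) an isomorphism onto $V^{n-1}$ after possibly permuting coordinates, and that the $n$-th coordinate is cut out by a single relation $x_n=\alpha(x_i)$ for some $i$ and some $\alpha\in H_x(\Z_{(p)})$ — which is exactly the assertion $X=V^{n-2}\times\Delta_{\alpha_1,\alpha_2}$ up to permutation.

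The heart of the argument is the rigidity input: $X$ is a closed irreducible subvariety of $V^n=Sh^n$ stable under the diagonal action of the local algebraic stabiliser $H_x(\Z_{(p)})=\cO_{(p)}^\times$ through a CM point $x^n$. By Chai's theory (the mod $p$ analogue of André–Oort for self-products of the Shimura curve), the Zariski closure of such a torus-orbit-stable subvariety through an ordinary CM point is a ``Shimura subvariety'' of $Sh^n$, and for a product of (one-dimensional) modular/Shimura curves such subvarieties are precisely the irreducible components of the subvarieties defined by prescribing, on certain subsets of the factors, an isogeny relation — i.e. products of factors $V$ and of skewed diagonals $\Delta_{\alpha_1,\dots,\alpha_l}$. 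I would invoke this to conclude that $X$ is, up to reindexing, of the form $\prod_j \Delta_{J_j}$ where the $J_j$ partition $\{1,\dots,n\}$ and $\Delta_{J_j}$ is a skewed diagonal in the factors indexed by $J_j$ (a singleton block giving a full factor $V$). The translation between ``torus-stable through a CM point'' and the hypotheses in Chai's papers (ordinarity of $x$, the $\epsilon$-reduction to a commutative situation via \S2.3, and the explicit shape of $H_x(\Z_{(p)})$ from (3.2.1)) is where I would be most careful.

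From the structural description the combinatorics is immediate. The hypothesis that $\pi_\cX''\colon\cX\to S''=V$ (the last factor) is dominant means the block $J_j$ containing $n$ maps dominantly to its $n$-th coordinate, which is automatic; the hypothesis that $\pi_\cX'\colon\cX\to S'=V^{n-1}$ is dominant forces each of the blocks to meet $\{1,\dots,n-1\}$ and, crucially, forces the projection of each block to $V^{n-1}$ to be dominant onto the product of its coordinates in $\{1,\dots,n-1\}$; since each skewed diagonal $\Delta_{J_j}$ is one-dimensional, a block $J_j$ with $|J_j\cap\{1,\dots,n-1\}|\geq 1$ contributes exactly one dimension, and dominance onto $V^{n-1}$ then forces all but possibly one block to be singletons $\{i\}\subset\{1,\dots,n-1\}$. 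Counting dimensions, $\dim X=\#\{\text{blocks}\}$, and $\dim X\geq n-1$ with dominance onto $V^{n-1}$ gives either $n$ singleton blocks (so $X=V^n$) or $n-2$ singleton blocks in $\{1,\dots,n-1\}$ together with one two-element block pairing the remaining index in $\{1,\dots,n-1\}$ with $n$, i.e. $X=V^{n-2}\times\Delta_{\alpha_1,\alpha_2}$ up to a permutation of the first $n-1$ factors. I expect the main obstacle to be not this bookkeeping but the precise invocation and verification of Chai's rigidity theorem in the present quaternionic setting — pinning down that the subvariety $X$ arising from (LS1)–(LS3) satisfies the hypotheses under which a Hecke-stable subvariety of the mod $p$ Shimura variety's self-product is forced to be a Shimura subvariety, and that for our one-dimensional $V$ the only such subvarieties are the products of factors and skewed diagonals described above.
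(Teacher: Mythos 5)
There is a genuine gap, and it is exactly where you flag your own unease. Your argument reduces the whole theorem to the assertion that a closed irreducible locally stable subvariety of $V^n$ through an ordinary CM point ``is a Shimura subvariety,'' and then to the classification of Shimura subvarieties of a self-product of a one-dimensional Shimura variety as products of factors and skewed diagonals. But that assertion is not a citable theorem: it \emph{is} the content of Theorem 4.5. Chai's papers [Ch1], [Ch2], [Ch3] establish the \emph{local} rigidity (that the formal completion at the CM point of the normalization is a union of formal subtori of the Serre--Tate torus) and prove the global Hecke orbit/rigidity principle in other settings (Siegel moduli), but the global classification of Hecke-stable subvarieties of self-products of quaternionic Shimura curves mod $p$ is precisely what must be proved here. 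Hida [Hi3, \S 3] carries this out in the Hilbert modular case, and the present paper adapts his argument; you cannot invoke the conclusion as input.

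Concretely, what your proposal omits is the passage from the formal-local statement to the global one, which in the paper occupies Propositions 4.6 and 4.9 and Theorems 4.10, 4.11. The paper first normalizes $X$ and applies Chai's local rigidity to get that $\widehat{\cY}_{y}$ is a formal subtorus $\widehat{\mathbb{G}}_m\otimes L_y$ (Prop.\,4.6). It then shows (Prop.\,4.9) that the normalization is $\cO_B$-linear on an open dense set — i.e.\ that the formal-local equations $t_1^a=t_2^b$ persist away from $x^n$ — by extending the Barsotti--Tate isogeny $a_x$ over $\widehat{Y}_y$ and then over an $fpqc$ cover, using the étale structure and Serre--Tate theory. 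Finally, the decisive global step (Thm.\,4.10) uses de Jong's theorem on homomorphisms of Barsotti--Tate groups in positive characteristic to promote the Barsotti--Tate isogeny to an honest isogeny $\cA_1\to\cA_2$ of the pulled-back abelian schemes, forcing $\End^{\Q}(\cA)=M_2(B)$ rather than $B^2$, whence $X=\Delta_{1,\beta}$ with $\beta\in H_x(\Z_{(p)})$. None of this is a routine verification of hypotheses; it is the substance of the theorem. Your dimension/combinatorics bookkeeping in the last paragraph (that dominance of $\pi_{\cX}'$ and a block decomposition force at most one two-element block containing $n$) is fine and matches what one does once the structure is known, but it is exactly the easy part.
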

\noindent The theorem is an instance of Chai-Oort principle that a Hecke stable subvariety of a Shimura variety is a Shimura subvariety (\cf \cite{Ch1}, \cite{Ch2} and \cite{Ch3}). 
The principle is an analogue of Andre-Oort conjecture for mod $p$ Shimura varieties.\\
\\
The theorem is proven in \S4.3-\S4.5.\\
\\
In \cite{Hi3}, an analogue of the theorem is proven in the Hilbert modular case. 
For the proof, we follow the same strategy as in \cite{Ch1}, \cite{Ch2} and \cite[\S3]{Hi3}. 
We first consider the structure of $\Spf(\widehat{X}_{x^n})$ as a formal subscheme of $\Spf(\widehat{\cO}_{V^n,x^n})$. Recall that 
the Serre-Tate deformation space $\Spf(\widehat{\cO}_{V^n,x^n})$ has a natural structure of a formal torus. As $\Spf(\widehat{X}_{x^n})$ is a 
formal subscheme of $\Spf(\widehat{\cO}_{V^n,x^n})$ stable under the diagonal action of the stabiliser $H_{x}(\Z_{(p)})$ and (pretending) $X$ is smooth, 
it follows from Chai's local rigidity that $\Spf(\widehat{X}_{x})$ is in fact a formal subtorus of $\Spf(\widehat{\cO}_{V^n,x^n})$. 
In view of Theorem 3.1, we have a description of $\Spf(\widehat{\cO}_{V^n,x^n})$ as a formal torus.
Based on it, we obtain a description of the formal subtori of $\Spf(\widehat{\cO}_{V^n,x^n})$ stable 
under the diagonal action of the stabiliser $H_{x}(\Z_{(p)})$. 
We thus obtain an explicit description of the possibilities for the structure of $\Spf(\widehat{X}_{x^n})$ as a formal torus. 
Based on de Jong's result on Tate conjecture in \cite{dJ}, we can eliminate all but one possibilities. 
Strictly speaking, the proof is a bit more involved as we cannot directly suppose that $X$ is smooth. 
Instead, we apply the strategy to the normalisation of $X$ and later show that the normalisation equals $X$ itself.\\
\\
\\
\subsection{Local structure} In this subsection, we consider the local structure of a class of locally stable subvarieties of a self-product of the Shimura curve.\\
\\
Let the notation and hypotheses be as in \S 4.2. However, we do not suppose that the projection $\pi_{\cX}''$ is dominant. 
Let $\Pi_{\cY}{}:\cY\rightarrow \cX$ be the normalisation of $\cX$. \\
\\
The following is an analogue of \cite[Prop. 3.11]{Hi3} to our setting.\\
\begin{prop} 
Let the notation and assumptions be as above. Suppose that the projection $\pi_{\cX}':\cX \rightarrow S'$ is dominant. 
Then, the following holds.\\
\\
(1). The normalisation $\cY$ has only finitely many points $y$ over $x^n$. For any such $y$, we have $\widehat{\cY}_y=\widehat{\mathbb{G}}_m \otimes_{\Z_p} L_y$ 
for an $\Z_p$-direct summand $L_y$ of $X_{*}(\widehat{S^n})$. Moreover, 
the isomorphism class of $	L_y$ as an $\Z_p$-module is independent of $y$.\\
(2). The normalisation $\cY$ is smooth over $\F$ and flat over $S'$.\\
(3). Either $\cX=S^n$ or $\cX$ is finite over $S'$. In the later case, the normalisation $\cY$ is finite flat over $S'$.\\
(4). If $\pi_{\cX}' \circ \Pi_{\cY}{}$ induces a surjection of the tangent space at $y$ onto $S'$ at $x^n$ for some $y$ over $x^n$, then 
$\pi_{\cX}' \circ \Pi_{\cY}{}:\cY\rightarrow S'$ is $\acute{e}tale$.\\
\end{prop}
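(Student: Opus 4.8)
The plan is to reduce everything to the structure of the formal completions of $\cY$ at its closed points --- all of which lie over $x^n$ --- via Chai's local rigidity together with the reciprocity law (Lemma 3.5). Since $\cX$ is a local scheme with unique closed point $x^n$ and is excellent, the normalisation $\Pi_{\cY}\colon\cY\to\cX$ is finite, so $\cY$ is an integral normal scheme with only finitely many closed points, each over $x^n$; this is the first assertion of (1). Fix such a $y$. Then $\cO_{\cY,y}$ is a normal local domain and, by excellence, $\widehat{\cO}_{\cY,y}$ is a normal complete local domain. The composite $\Spf\widehat{\cO}_{\cY,y}\to\widehat{\cX}_{x^n}\hookrightarrow\Spf\widehat{\cO}_{V^n,x^n}$ presents $\Spf\widehat{\cO}_{\cY,y}$ as the normalisation of a branch of $\widehat{\cX}_{x^n}$ sitting inside the Serre--Tate formal torus $\Spf\widehat{\cO}_{V^n,x^n}\iso\widehat{\mathbb{G}}_m\otimes_{\Zp}X_{*}(\widehat{S^n})$ of Theorem 3.1. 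Because $\mathfrak b$ is stable under the diagonal action of $H_{x}(\Z_{(p)})$, which by Lemma 3.5 acts on the $i$-th Serre--Tate coordinate $t_i$ by $t_i\mapsto t_i^{\alpha^{1-c}}$, every branch of $\widehat{\cX}_{x^n}$ is stable under this action; Chai's local rigidity then identifies the branch, hence $\Spf\widehat{\cO}_{\cY,y}$, with a formal subtorus $\widehat{\mathbb{G}}_m\otimes_{\Zp}L_y$ for a $\Zp$-direct summand $L_y$ of $X_{*}(\widehat{S^n})$.

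Being a direct summand of the free module $X_{*}(\widehat{S^n})$, the lattice $L_y$ is free, so its isomorphism class depends only on $\rank_{\Zp}L_y=\dim\cY$, which is independent of $y$ (as $\cX$ is excellent and integral, all branches of $\widehat{\cX}_{x^n}$ are equidimensional). This settles (1). For (2): each $\widehat{\cO}_{\cY,y}$ is the completed affine ring of a formal torus, hence regular, so $\cO_{\cY,y}$ is regular at every closed point $y$; since the regular locus of $\cY$ is open and every nonempty closed subset of $\cY$ contains a closed point, $\cY$ is regular, hence smooth over the perfect field $\F$. For flatness over $S'$, note that $\pi_{\cX}'\circ\Pi_{\cY}\colon\cY\to S'$ is dominant because $\pi_{\cX}'$ is, and that under the identifications above the induced map $\Spf\widehat{\cO}_{\cY,y}\to\widehat{S'}$ is the homomorphism of formal tori attached to the composite $\mathrm{pr}'\colon L_y\hookrightarrow X_{*}(\widehat{S^n})\twoheadrightarrow X_{*}(\widehat{S'})$; dominance of $\pi_{\cX}'$ forces $\mathrm{pr}'$ to have image of finite index, so the fibres of $\cY\to S'$ are equidimensional of relative dimension $\dim\cY-\dim S'$. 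The miracle flatness criterion (with $\cY$ Cohen--Macaulay, being smooth, and $S'$ regular) then gives flatness, completing (2).

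For (3) and (4): if some (equivalently every) $L_y=X_{*}(\widehat{S^n})$, then $\Spf\widehat{\cO}_{\cY,y}=\Spf\widehat{\cO}_{V^n,x^n}$, whence $\cX=\cY=S^n$. Otherwise $\rank_{\Zp}L_y\le n-1$, and together with $\mathrm{pr}'(L_y)$ having finite index in $X_{*}(\widehat{S'})\iso\Zp^{\,n-1}$ this forces $\rank_{\Zp}L_y=n-1$ and $\mathrm{pr}'$ to be injective with finite cokernel; hence $\Spf\widehat{\cO}_{\cY,y}\to\widehat{S'}$ is finite flat, so $\cY\to S'$ is quasi-finite at every closed point, hence finite (as $\cY$ is finite over the local scheme $\cX$) and, by (2), finite flat; in particular $\cX$ is finite over $S'$. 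This is (3). For (4), in this non-trivial case $\cY$ and $S'$ are smooth of the same dimension $n-1$, so a surjection of tangent spaces at some $y$ over $x^n$ is an isomorphism, i.e. $\pi_{\cX}'\circ\Pi_{\cY}$ is étale at $y$; since the étale locus is open and, by the equivariance (EQ), stable under $H_{x}(\Z_{(p)})$, and $\cY$ is semi-local with all its closed points lying over the $H_{x}$-fixed point $x^n$, one concludes as in \cite[\S3]{Hi3} that it is all of $\cY$.

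The genuinely deep ingredient is Chai's local rigidity --- that an $H_{x}(\Z_{(p)})$-stable formal subscheme of a Serre--Tate deformation space is a formal subtorus --- which I take as established in the excerpt; granted it, the content is the commutative algebra (finiteness of normalisation, excellence, the miracle flatness criterion, openness of the regular and étale loci) and the linear algebra of formal tori over $\Zp$. I expect the main obstacle to be the bookkeeping around the normalisation: translating ``dominant'' and the tangent-surjectivity hypothesis of (4) into the conditions on the cocharacter lattices $L_y$, controlling all branches of $\widehat{\cX}_{x^n}$ uniformly in $y$, and descending flatness and étaleness from the formal completions down to $\cY$ itself.
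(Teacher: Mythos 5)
Your claim that ``$\cX$ is a local scheme with unique closed point $x^n$'' is false, and this is not a cosmetic slip: several of the subsequent steps lean on it. Recall that $\cX=\Spec\big(\cO_{V,x}\otimes_{\F}\cdots\otimes_{\F}\cO_{V,x}/\mathfrak b\big)$ per (LS2), and $\cO_{V,x}\otimes_{\F}\cO_{V,x}$ already has maximal ideals other than $\mathfrak m_{x^2}$ (in a coordinate $s$ on $\cO_{V,x}$ this ring is $T^{-1}\F[s,t]$ with $T=\{f(s)g(t):f(0),g(0)\neq0\}$, and for instance $(st-1)$ survives the localisation and is maximal). In particular when $\mathfrak b=0$ --- exactly the case one hopes is forced by the hypothesis on the $a_i$ --- one has $\cX=S^n$, which has many closed points; and if $\cX\neq S^n$ and $n\geq3$ then $\cX$ finite dominant over $S'=S^{n-1}$ again has closed points above every closed point of $S'$, not only $x^{n-1}$. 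Consequently the closed points of $\cY$ do \emph{not} all lie over $x^n$. This kills your argument for (2): you only establish regularity of $\cO_{\cY,y}$ for $y$ over $x^n$, and the ``regular locus is open, non-regular locus closed hence contains a closed point'' argument then only closes the loop if every closed point lies over $x^n$. Your argument for (3) also invokes ``$\cY$ is finite over the local scheme $\cX$'' in the same spot. A secondary but real gap in the same vein: dominance of $\pi_\cX'$ only yields one point $y_0$ over $x^n$ for which $\mathrm{pr}'(L_{y_0})$ has finite index in $X_*(\widehat{S'})$; deducing from $\rank_{\Zp}L_y=n-1$ that $\mathrm{pr}'|_{L_y}$ is injective for \emph{all} $y$ needs a further argument (a corank-$1$ summand of $\Zp^n$ can project with positive-dimensional kernel onto a chosen coordinate hyperplane).

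That said, your treatment of (1) is correct and takes a genuinely different route from the paper on the final ``Moreover'' clause. After the local stabiliser/rigidity step (which both you and the paper use; your observation that the image of $\alpha\mapsto\alpha^{1-c}$ is dense in the relevant piece of $\Zp^\times$ substitutes adequately for the explicit invocation of the local stabiliser principle), the paper establishes $\rank_{\Zp}L_y$ independent of $y$ by a global argument: passing to the normalisation $Y$ of the Zariski closure $X\subset V^n$, using the $\cO_B$-action on $\Theta_Y$ and the Kodaira--Spencer isomorphism to identify $\rank_{\Zp}L$ with $\rank_{\cO_Y}\Theta_Y$, a quantity intrinsic to $Y$. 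You instead observe that $\cO_{\cX,x^n}$ is an excellent local domain, hence its completion is equidimensional, and all branches (hence all $L_y$) share the same dimension. This is a tidier, purely commutative-algebraic argument and is correct. To complete (2)--(4) along your lines you would need to handle closed points of $\cY$ away from $x^n$, e.g.\ by using the $H_x(\Z_{(p)})$-action to move information from $x^n$ (noting however that $x^n$ is fixed by this action, so this is not automatic), or by arguing directly in the cases $\cX=S^n$ and $\cX$ finite over $S'$ once (3) is separately in hand --- which is roughly what the paper outsources to \cite[Prop.~3.11]{Hi3}.
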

\begin{proof} The proof is similar to the proof of Proposition 3.11 in \cite{Hi3}. Here we only prove (1).\\
\\
Let $\widehat{\cX}=\widehat{\cX}_{x^n}$. By the local stabiliser principle (cf. \cite[Prop. 6.1]{Ch3}), $\widehat{\cX}$ is a formal subscheme of 
$\widehat{S^n}$ stable under the diagonal action of the $p$-adic stabiliser $H_{x}(\Z_p)$. 
In view of Chai's local rigidity (cf. \cite[\S6]{Ch2}), it follows that $\widehat{\cX}=\bigcup_{L\in I} \widehat{\mathbb{G}}_m \otimes_{\Z_p} L$. 
Here $L$ is an $\Z_p$-direct summand of $X_{*}(\widehat{S^n})$ (cf. Theorem 3.1) 
and the union is finite.\\
\\
Note that the semigroup $\End_{SCH}(\cX)$ naturally acts on $\cY$. In particular, the stabiliser $H_{x}(\Z_{(p)})$ acts on $\cY$. 
As $\widehat{\cY}_y$ is integral, the points $y$ are indexed by the irreducible components of $\widehat{\cX}$. Let $y_L$ be the point corresponding to $L$. 
As the morphism $\pi_{\cX}' \circ \Pi_{\cY}{}$ is dominant, there exists at least one $L_{0} \in I$ such that the image of $L_0$ is of finite index in $X_{*}(\widehat{S'})$.\\
\\
To arrive at the desired conclusion, we consider a global argument as follows. Let $\Pi:Y\rightarrow X$ be the normalisation of $X$. 
As before, the local algebraic stabiliser $H_{x}(\Z_{(p)})$ acts on $Y$. 
Recall that the order $\cO_B$ acts on $X$ and hence on $\Theta_X$. This action extends to $\Theta_Y$. 
In view of the Kodaira-Spencer isomorphism for the universal abelian scheme $\cA_{/Y}$ over $Y$ and  the identification $\widehat{Y}_{y_L}=\widehat{\mathbb{G}}_m \otimes_{\Z_p} L$, 
it follows that $\rank_{\Z_p} L = \rank_{{\cO_Y}} \Theta_{Y}$. This finishes the proof of (1).\\
\end{proof}
\noindent\\
\\
\subsection{Global structure I} In this subsection, we study global structure of a class of locally stable subvarieties of a $2$-fold self-product of the Shimura curve.\\
\\
Let the notation and hypotheses be as in \S4.2 and $n=2$. In view of Proposition 4.6, 
we conclude that either $\dim \cX= \dim V$ or $X=V^2$. 
From now, we suppose that $\dim \cX= \dim V$. 
In the rest of the subsection, we let $i=1,2$.\\
\\
We need to show that $X$ is a graph of an automorphism of $V$ arising from $H_{x}(\Z_{(p)})$. In other words, we need to show that the projection $\pi_{i}:X\rightarrow V_i$ 
is an isomorphism. 
Here $\pi_i$ is the projection to the $i^{th}$-factor and we view $V_{i}=V$ as the $i^{th}$ factor of $V\times V$. 
Our strategy is the following.\\
\\
{\bf{Step 1.}} The morphism $\Pi_{i}:Y\rightarrow X\rightarrow V$ is étale over an open dense subscheme of $V$. This is proven based on
Serre-Tate deformation theory and Proposition 4.6.\\
\\
{\bf{Step 2.}} Let $\cA_{i}=\Pi_{i}^{*}\cA$ and $\eta_{i}^{(p)}=\Pi_{i}^{*}\eta^{(p)}$. 
There exists an isogeny $\varphi:\cA_{1} \rightarrow \cA_2$. This is proven based on de Jong's theorem in \cite{dJ}. In particular, there exists 
$g\in G(\A_{\Q,f}^{(p)})$ such that $\varphi \circ \eta_{1}^{(p)}=\eta_{2}^{(p)}\circ g$. We consider the fiber at $x$ and conclude that $g\in H_{x}(\Z_{(p)})$. 
Finally, we deduce that $Y=X$ and $X=\Delta_{1,g}$.\\
\\
We now describe each step. In view of Proposition 4.6 and our assumption $\dim \cX= \dim V$, it follows that the morphism $\Pi_{i}:Y\rightarrow V$ is finite at 
any point $y\in Y$ above $x^2$. If $\Pi_i$ is not étale at $y$, then $\Pi_{i,*}(X_{*}(\widehat{Y}_{y})) \subset X_{*}(\widehat{S})$ is a 
$\Z_p$-submodule of finite index. Let $\alpha \in H_{x}(\Z_{(p)})$ such that 
$\alpha^{1-c}X_{*}(\widehat{S})=\Pi_{i,*}(X_{*}(\widehat{Y}_{y}))$. The morphism $\alpha^{-1}\circ \Pi_{i}$ is étale. 
 Thus, from now we suppose that\\
\\
(E) the morphism $\Pi_{i}:Y\rightarrow V$ is étale finite at any point $y\in Y$ over $x^2$.\\
\\
Let $V^{\acute{e}t}$ be the maximal subscheme over which both $\Pi_1$ and $\Pi_2$ are étale. 
Let $\Pi_{i}$ also denote the projection $\cY\rightarrow S$.\\
\begin{lm} Let the notation and assumptions be as above. 
If the projections $\Pi_{1}:\cY\rightarrow S$ and $\Pi_{2}:\cY\rightarrow S$ are both étale, then $V^{\acute{e}t}$ is an open dense subscheme of 
$V^2$ containing $x^2$ and stable under the diagonal action of the stabiliser $H_{x}(\Z_{(p)})$.
\end{lm}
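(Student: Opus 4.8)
The plan is to verify the three assertions of the lemma in turn --- that $V^{\acute{e}t}$ is open, that it is dense and contains $x^{2}$, and that it is stable under the diagonal action of $H_{x}(\Z_{(p)})$; the first and third are formal, and essentially all the content lies in the containment $x^{2}\in V^{\acute{e}t}$.

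First I would fix what $V^{\acute{e}t}$ is and dispose of openness and density. By hypothesis each $\Pi_{i}\colon\cY\to S$ ($i=1,2$) is étale; globalising and using that $\Pi_{i}\colon Y\to V$ is finite (Proposition 4.6(3)), the non-étale locus of $\Pi_{i}$ in $Y$ is closed with closed image $Z_{i}\subseteq V$, and $U_{i}:=V\setminus Z_{i}$ is the largest open of $V$ over which $\Pi_{i}$ is étale. Set $V^{\acute{e}t}:=\pi_{1}^{-1}(U_{1})\cap\pi_{2}^{-1}(U_{2})\subseteq V^{2}$. Since the locus where a finite-type morphism is étale is open, $V^{\acute{e}t}$ is open in $V^{2}$. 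By Step 1, each $\Pi_{i}$ is étale over a dense open of $V$, so each $U_{i}$ is dense in the irreducible curve $V$; as the projections $\pi_{i}\colon V^{2}\to V$ are open and $V^{2}$ is irreducible, $V^{\acute{e}t}$ is dense.

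Next I would prove $x^{2}\in V^{\acute{e}t}$, which is the heart of the matter. The hypothesis that $\Pi_{i}\colon\cY\to S$ is étale says exactly that the global map $\Pi_{i}\colon Y\to V$ is étale at every point of $Y$ lying above $x^{2}$ --- this is condition (E) of \S4.4, now for both $i$. To deduce $x\in U_{i}$ one must rule out ramification of $\Pi_{i}$ over points of $\pi_{i}^{-1}(x)\subseteq X$ other than $x^{2}$, and here I would use that $\cX=\Spec(\cO_{V,x}\otimes_{\F}\cO_{V,x}/\mathfrak{b})$ is local with closed point $x^{2}$ --- the tensor product over the algebraically closed field $\F$ of two local rings with residue field $\F$ being local --- so that its normalisation $\cY$, finite over $\cX$, is semilocal with every closed point lying above $x^{2}$; consequently, working over the local ring $\cO_{V,x}$, the only points of $Y$ relevant to the étaleness of $\Pi_{i}$ over $x$ are those above $x^{2}$, and the hypothesis gives étaleness there. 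Granting this, $x\in U_{1}\cap U_{2}$, hence $x^{2}\in V^{\acute{e}t}$.

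For the $H_{x}(\Z_{(p)})$-stability: each $\alpha\in H_{x}(\Z_{(p)})$ acts on $Sh$ through the prime-to-$N^{-}p$ level structure (\S3.2) and fixes $x$, so it preserves the irreducible component $V$ and induces an automorphism $\alpha\colon V\isoto V$; the diagonal point $x^{2}$ is then fixed by the diagonal action on $V^{2}$, under which $X$ is stable (\S4.2, by the equivariance (EQ)). From $\pi_{i}\circ\alpha_{\mathrm{diag}}=\alpha\circ\pi_{i}$ and the fact that composing a morphism with an automorphism neither creates nor destroys étaleness at a point, one sees that $Z_{i}$, hence $U_{i}$, is $\alpha$-stable, and therefore $V^{\acute{e}t}=\pi_{1}^{-1}(U_{1})\cap\pi_{2}^{-1}(U_{2})$ is stable under $\alpha_{\mathrm{diag}}$. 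I expect the one genuine obstacle to be the step in the third paragraph --- converting the étaleness of the local maps $\Pi_{i}\colon\cY\to S$ supplied by the hypothesis into the Zariski statement $x^{2}\in V^{\acute{e}t}$, for which the finiteness and smoothness of the normalisation (Proposition 4.6(2)--(3)) and a clean description of the fibre $\pi_{i}^{-1}(x)$ are the essential inputs; openness, density, and the equivariance under $H_{x}(\Z_{(p)})$ are routine.
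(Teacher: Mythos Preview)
There is a genuine gap in your third paragraph. The assertion that the tensor product over $\F$ of two local $\F$-algebras with residue field $\F$ is again local is false. Take $R=\cO_{V,x}$, a DVR; writing $R\otimes_{\F}R$ as the localisation of $\F[s,t]$ at the multiplicative set generated by polynomials in $s$ alone (resp.\ in $t$ alone) not vanishing at the origin, one checks that the ideal $(s-t-1)$ survives this localisation and that its residue ring is the field $\F(t)$, so it is a maximal ideal distinct from $(s,t)$. Hence $S^{2}$ is not local, and there is no reason for $\cX$ or $\cY$ to be. Worse, even granting locality of $\cY$ your conclusion would not follow: $\cY$ only records the part of $Y$ lying over the image of $S^{2}\to V^{2}$, and a point $y\in Y$ sitting above $(x,v)\in X$ with $v\neq x$ a closed point of $V$ lies outside that image altogether, so the hypothesis on $\cY$ says nothing about \'etaleness of $\Pi_{1}$ at such $y$. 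You therefore cannot deduce $x\in U_{1}$.

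The remedy is to read $V^{\acute{e}t}$ as the paper intends: it is the complement in $V^{2}$ of the image, under the finite morphism $Y\to X\hookrightarrow V^{2}$, of the closed locus in $Y$ where either $\Pi_{1}$ or $\Pi_{2}$ fails to be \'etale --- not your set $\pi_{1}^{-1}(U_{1})\cap\pi_{2}^{-1}(U_{2})$, which is strictly smaller in general. With this definition everything is immediate: the hypothesis says precisely that $\Pi_{1},\Pi_{2}$ are \'etale at every $y\in Y$ above $x^{2}$, so $x^{2}\in V^{\acute{e}t}$; openness holds because $Y\to V^{2}$ is finite, hence closed; density is automatic since $\dim X=\dim V<\dim V^{2}$ forces $V^{2}\setminus X\subset V^{\acute{e}t}$ to be open dense; and your equivariance argument for $H_{x}(\Z_{(p)})$-stability goes through unchanged. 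This is the shape of the argument referred to in \cite{Hi3}, and the paper's remark that it is simpler here just reflects that properness of $Sh$ removes any need to worry about a boundary.
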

\begin{proof} The later part follows from the definition. 
The former part can be proven in a way similar to the proof of \cite[Prop. 3.14]{Hi3}. 
In fact, the proof is simpler in our case as $Sh$ is proper.\\
\end{proof}
\noindent\\
Let $Y^{ord}=Y\times_{V^2}(V^{ord})^2$. Let $y'\in Y^{ord}$ be a closed point above $(x_1,x_2)\in (V^{ord})^2$.\\
\begin{defn} $Y$ is said to be $\cO_B$-linear at $y'$ if $\Pi_{1}\times\Pi_{2}$ embeds $\widehat{Y}_{y'}$ into $\widehat{V}_{x_{1}}\times\widehat{V}_{x_{2}}$ 
and the equation defining $\widehat{Y}_{y'}$ is given by $t_{1}^{a}=t_{2}^{b}$, where $t_{i}$ is the Serre-Tate co-ordinate of $\widehat{V}_{x_{i}}$ (cf. \S3.1) 
and $a,b \in \End_{\epsilon \cO_{B,p}} (\widehat{S})$.\\
\end{defn}
\noindent Let $Y^{lin}\subset Y^{ord}$ be the subset of all $\cO_B$-linear points. Here is the Step 1 outlined above.\\
\begin{prop} Let the notation and assumptions be as above.  
The subset $Y^{lin}$ contains the closed points of an open dense subscheme $U$ of $Y^{ord}$.
\end{prop}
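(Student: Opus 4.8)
The plan is to prove that $\cO_B$-linearity is an open condition on $Y^{ord}$ and that it holds densely, by combining the Serre–Tate description of the formal completions from Theorem 3.1 with the étaleness established in (E) and Lemma 4.7. First I would work over the open dense subscheme $V^{\acute{e}t}\subset V^2$ of Lemma 4.7, so that both $\Pi_1$ and $\Pi_2$ are étale there and $Y^{ord}$ lies above $(V^{ord})^2\cap V^{\acute{e}t}$; restricting further I may assume $Y^{ord}$ is itself smooth and that $\Pi_1\times\Pi_2$ is unramified, hence a local immersion of formal schemes at each closed point $y'$. Fix such a $y'$ above $(x_1,x_2)$. By part (2) of Theorem 3.1, a choice of $p^\infty$-level structures identifies $\widehat{V}_{x_1}$ and $\widehat{V}_{x_2}$ with formal tori $\widehat{\bbG}_m$ (after applying $\epsilon$, i.e. the relevant factor is $\widehat{\bbG}_m\otimes_{\Z_p}\epsilon\cO_B^{\circ}$ of rank one over $\epsilon\cO_{B,p}$), so $\widehat{Y}_{y'}$ becomes a formal subscheme of a product of two one-dimensional formal tori.

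Next I would invoke the local stabiliser principle (\cite[Prop. 6.1]{Ch3}) together with Chai's local rigidity (\cite[\S6]{Ch2}), exactly as in the proof of Proposition 4.6: since $\widehat{X}_{x^2}$ (and hence, via the normalisation, $\widehat{Y}_{y'}$ for the points $y'$ over $x^2$) is stable under the diagonal action of $H_x(\Z_{(p)})$, and $Y^{ord}$ is obtained by translating $x^2$ around by Hecke correspondences that preserve $\epsilon\cO_{B,p}$-structures, the formal completion $\widehat{Y}_{y'}$ is a formal subtorus of $\widehat{V}_{x_1}\times\widehat{V}_{x_2}$. A one-dimensional formal subtorus of $\widehat{\bbG}_m\times\widehat{\bbG}_m$ compatible with the $\epsilon\cO_{B,p}$-action is cut out by an equation of the shape $t_1^{a}=t_2^{b}$ with $a,b\in\End_{\epsilon\cO_{B,p}}(\widehat S)$; this is precisely the condition in Definition 4.9, so $y'\in Y^{lin}$. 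The point is that for every closed point $y'$ sitting over the fixed CM point $x^2$, $\cO_B$-linearity holds; the diagonal $H_x(\Z_{(p)})$-action then spreads this to a dense set.

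To get an \emph{open} dense subscheme rather than just a dense subset, I would argue that $\cO_B$-linearity at $y'$ is detected by the vanishing of the Kodaira–Spencer/Gauss–Manin obstruction to the embedded formal completion being a subtorus, which varies algebraically in $y'$; equivalently, the locus where $\widehat{Y}_{y'}\hookrightarrow \widehat{V}_{x_1}\times\widehat{V}_{x_2}$ is linear in Serre–Tate coordinates is the complement of the support of a coherent sheaf (the "non-linearity" sheaf) on $Y^{ord}$, hence open, and it is nonempty since it contains the points over $x^2$. Taking $U$ to be the interior of $Y^{lin}$ inside that open locus, intersected with the smooth étale locus, gives the claim. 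I expect the main obstacle to be making the passage from "$\widehat{Y}_{y'}$ is a formal subtorus for every $y'$ over the single CM point $x^2$" to "the same holds on a Zariski-open neighbourhood": this requires knowing that the $\epsilon\cO_{B,p}$-linearity of the Serre–Tate embedding is a constructible/open condition, which in \cite[\S3]{Hi3} is handled via a rigidity/spreading-out argument; adapting that to the quaternionic setting — keeping track of the idempotent $\epsilon$ and the rank-one $\epsilon\cO_B^{\circ}$-structure throughout — is the delicate part, while the pointwise subtorus statement is a direct citation of Chai's results.
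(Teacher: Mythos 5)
Your argument establishes $\cO_B$-linearity only at the points $y'$ lying over the CM point $x^2$, and the passage from there to a Zariski-open neighbourhood is where the gap lies. You justify the general claim by saying that ``$Y^{ord}$ is obtained by translating $x^2$ around by Hecke correspondences that preserve $\epsilon\cO_{B,p}$-structures,'' and that Chai's local rigidity then applies at every closed point. This is not correct: $Y^{ord}$ is the whole ordinary locus of $Y$, not the Hecke orbit closure of $y$, and a generic closed point $y'$ of $Y^{ord}$ is not a CM point, so there is no large algebraic local stabiliser at $y'$ to feed into \cite[Prop.~6.1]{Ch3} or \cite[\S 6]{Ch2}. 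Local rigidity gives a subtorus conclusion only where you have a $p$-adically dense algebraic torus action stabilising the formal germ, and that is available at $x^2$, not at arbitrary ordinary points of $Y$. Your fallback idea of a coherent ``non-linearity sheaf'' detected by a Kodaira--Spencer/Gauss--Manin obstruction is not constructed, and you yourself flag this step as the obstacle; as written the argument is circular, since taking ``the interior of $Y^{lin}$'' presupposes that $Y^{lin}$ contains an open set, which is precisely what must be proved.

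The missing idea, and the heart of the paper's proof, is to upgrade the pointwise statement at $y$ to an open neighbourhood by \emph{extending the isomorphism of Barsotti--Tate groups}. Concretely: from Proposition 4.6(1) one extracts $a_x\in\Z_p^\times$ and interprets it as an isomorphism $a_x:\epsilon\cA_{1,x}[p^\infty]\to\epsilon\cA_{2,x}[p^\infty]$. One approximates $a_x$ by elements $\alpha_n\in H_x(\Z_{(p)})$ prime to $p$ with $\alpha_n^{1-c}\equiv a_x\pmod{p^n}$; these are global algebraic correspondences acting on the universal abelian scheme, and by the reciprocity law (Lemma 3.5) they agree with $a_x$ on $p^n$-torsion over the $n$-th infinitesimal neighbourhood. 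Passing to the limit extends $a_x$ to a homomorphism $\widehat{a}$ of Barsotti--Tate groups over $\widehat{Y}_y$, and an $fpqc$ descent (following \cite[Prop.~8.4]{Ch2} and \cite[pp.~92--93]{Hi3}) spreads $\widehat{a}$ to an irreducible \'etale cover $\widetilde{U}$ of an open dense $U\subset Y^{ord}$. Once one has the global section $\tilde a$ over $\widetilde{U}$, Serre--Tate theory shows directly that the Serre--Tate coordinates at each $u\in U$ satisfy $t_1=t_2^{a_u}$ for a unit $a_u$, giving $\cO_B$-linearity openly. Without this extension-and-descent mechanism, there is no way to move from the single CM fibre to an open set, so the proposal as it stands does not prove the proposition.
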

\begin{proof} We first describe the main steps of the proof.\\
\\
{\it{$\cO_B$-linearity at $y$}} : In view of Proposition 4.6, it follows that $Y$ is $\cO_B$-linear at $y$. In fact, we can suppose that $b=1$ and $a=a_{x}$ is a unit in 
$\End_{\epsilon \cO_B} (\widehat{S})$. We regard $a_x$ as an isomorphism 
$a_{x}: \epsilon\cA_{1,x}[p^\infty]\rightarrow \epsilon\cA_{2,x}[p^\infty]$ 
of Barsotti-Tate groups. {\it{Existence of $U$ via extension of a homomorphism}} : Based on Serre-Tate deformation theory, 
we reduce the existence of $U$ to the existence of an étale covering $\widetilde{U}$ of an open dense subscheme of $Y^{ord}$ containing $y$ such that $a_x$ 
extends to a homomorphism 
$\tilde{a}:\epsilon\cA_{1}[p^\infty]_{/\widetilde{U}}\rightarrow \epsilon\cA_{2}[p^\infty]_{/\widetilde{U}}$ of Barsotti-Tate groups over $\widetilde{U}$. 
{\it{Extension of a homomorphism}} : Based on Serre-Tate deformation theory,
 we first show that $a_x$ extends to a homomorphism 
 $\widehat{a}:\epsilon\cA_{1}[p^\infty]_{/\widehat{Y}_{y}}\rightarrow \epsilon\cA_{2}[p^\infty]_{/\widehat{Y}_{y}}$ 
 of Barsotti-Tate groups over $\widehat{Y}_{y}$. The existence of $\tilde{U}$ with the desired property then follows by an $fpqc$-descent.\\
\\
We now describe each step.\\
\\
{\it{$\cO_B$-linearity at $y$}} : By part (1) of Proposition 4.6, the formal torus $\widehat{\cY}_y$ is canonically isomorphic to a formal subtorus 
$\widehat{\mathbb{G}}_{m} \otimes L$ of $\widehat{S}^{2}= \widehat{\mathbb{G}_{m}^{2}}$ for  
a $\Z_p$-free direct summand $L$ of $\Z_{p}^{2}$. 
As $\dim{\cX}=\dim{V}$, 
we conclude that the equation of $\widehat{\cY}_y$ in $\widehat{S}^{2}$ is given by $t_{1}^{a}=t_{2}^{b}$, for some $a,b \in \Z_{p}$ 
such that $a \Z_{p}+b\Z_{p}=\Z_{p}$. Moreover, 
\beq L=\bigg{\{}(c,d)\in \Z_{p}^{2}:ac=bd\bigg{\}}.\eeq
In particular, $Y$ is $\cO_B$-linear at $y$. In view of (E), we replace $(a,b)$ by $(a/b,1)$ and suppose that $(a,b)=(a_x,1)$ with 
$a_{x} \in \Z_{p}^{\times}$. We now normalise the action of 
$a \in \Z_{p}$ on $\widehat{S}$ as follows. We modify $a$ by an element in 
$\Z_{p}$ if necessary, such that $a$ acts as identity on $\epsilon A_{x}[p^\infty]^{\circ}$ 
without affecting the original action of $a$ on $\widehat{S}$. In other words, $a$ acts on $\widehat{S}$ via the action on 
$\epsilon A_{x}[p^\infty]^{\acute{e}t}$. We identify $\cA_{i,y}$ with $A_x$. 
In this way, we regard $a_x$ as an isomorphism $a_{x}:\epsilon\cA_{1,x}[p^\infty]\rightarrow \epsilon\cA_{2,x}[p^\infty]$ of Barsotti-Tate groups.\\
\\
{\it{Existence of $U$ via extension of a homomorphism}} : Suppose that there exists an open dense subscheme 
$U\subset Y^{ord}$ containing $y$ having an irreducible étale cover $\tilde{U}$ such that $a_x$ extends to an isomorphism 
$\tilde{a}:\epsilon\cA_{1}[p^\infty]_{/\widetilde{U}}\rightarrow \epsilon\cA_{2}[p^\infty]_{/\widetilde{U}}$ 
of Barsotti-Tate groups over $\widetilde{U}$. We show that $Y$ is $\cO_B$-linear at the closed points of $U$. In view of (E) and 
Lemma 4.7, shrinking the neighbourhood $U$ of $y$ if necessary, we suppose that the projection $\Pi_{i}:U\rightarrow V$ is étale. 
Let $u\in U$ and $(\Pi_{1}(u),\Pi_{2}(u))=(u_1,u_2)$. In view of Serre-Tate deformation theory (cf. \S3.1) and the isomorphism 
$\pi_{i}:\widehat{Y}_{u}\iso \widehat{V}_{u_i}$, it follows that $\widehat{Y}_u$ is isomorphic to the deformation space of Barsotti-Tate $\epsilon \cO_{B,p}$-module 
$\epsilon A_{u_{i}}[p^\infty]$ via $\Pi_i$. 
Note that the universal deformation of the Barsotti-Tate $\epsilon \cO_{B,p}$-module 
$\epsilon A_{u_{i}}[p^\infty]$ is the Barsotti-Tate $\epsilon \cO_{B,p}$-module 
$\epsilon \cA_{i}[p^\infty]_{/\widehat{Y}_{u}}\iso \epsilon \cA[p^\infty]_{/\widehat{V}_{u_i}}$. 
We choose a $p^\infty$-level structure 
$\eta_{i,p}$ on $\epsilon A_{u_{i}}[p^\infty]$ for $i=1,2$ (cf. (PL)). 
Accordingly, we have the Serre-Tate co-ordinates $t_{i}$ (cf. (CC) and (STC)). 
The composition $\tilde{a}\circ \eta_{1,p}$ gives rise to a possibly new $p^\infty$-level structure on 
$\epsilon A_{u_{2}}[p^\infty]$. 
In view of the fact that $\Aut_{\epsilon \cO_{B,p}}{\widehat{Y}_u}=\Z_{p}^{\times}$ and Definition 3.2, it follows that the new $p^\infty$-level structures 
give rise to the Serre-Tate co-ordinate $t_{2}^{a_u}$ for some $a_{u} \in \Z_{p}^{\times}$. Thus, 
we have $t_{1}=t_{2}^{a_u}$. In other words, $\widehat{Y}_{u}$ is contained in the formal subscheme of $\widehat{V}_{u_1}\times \widehat{V}_{u_2}$ defined by 
$t_{1}=t_{2}^{a_u}$. As $\widehat{Y}_{u}$ is a smooth formal subscheme with an isomorphism $\pi_{i}:\widehat{Y}_{u}\iso \widehat{V}_{u_i}$, 
we conclude that $\widehat{Y}_{u}$ is itself defined by the equation $t_{1}=t_{2}^{a_u}$.\\
\\
{\it{Extension of a homomorphism}} : We first extend $a_x$ to a homomorphism 
$\widehat{a}:\epsilon\cA_{1}[p^\infty]_{/\widehat{Y}_{y}}\rightarrow \epsilon\cA_{2}[p^\infty]_{/\widehat{Y}_{y}}$ of Barsotti-Tate groups over $\widehat{Y}_{y}$. 
Let $n$ be a positive integer. 
Let $\alpha_{n} \in H_{x}(\Z_{(p)})$ be a prime to $p$ element such that $\alpha_{n}^{1-c} \equiv a$ (mod $p^{n}$). 
The homomorphism $\alpha_n$ also induces a homomorphism $\alpha_n:\cA_{/V} \rightarrow \cA_{/V}$ and hence, a homomorphism
\begin{diagram} \alpha_n:\epsilon \cA_{1}[p^n]_{/\widehat{Y}} &\rTo^{\iso}_{\Pi_1} & \epsilon \cA[p^n]_{/\widehat{V}_x} &\rTo_{\alpha_n} &\epsilon \cA[p^n]_{/\widehat{V}_x} &\rTo^{\iso}_{\Pi_{2}^{-1}} 
&\epsilon \cA_{2}[p^n]_{/\widehat{Y}}.
\end{diagram}
Thus, we have a homomorphism $\alpha_{n}:\epsilon \cA_{1}[p^n]_{/\widehat{Y}_{n}}\rightarrow \epsilon \cA_{2}[p^n]_{/\widehat{Y}_{n}}$ 
for an infinitesimal neighbourhood $\widehat{Y}_n$ of $y$. 
In view of Lemma 3.5, it follows that the homomorphism $\alpha_n$ equals $a|_{\epsilon \cA_{1}[p^n]}$ over $\widehat{Y}_n$. 
Let $\widehat{a}=\varprojlim \alpha_{n}|_{\epsilon \cA_{1}[p^n]_{/\widehat{Y}_n}}$. 
By definition, $\widehat{a}:\epsilon\cA_{1}[p^\infty]_{/\widehat{Y}_{y}}\rightarrow \epsilon\cA_{2}[p^\infty]_{/\widehat{Y}_{y}}$ 
is a homomorphism of Barsotti-Tate groups over $\widehat{Y}_{y}$. 
The existence of $\tilde{U}$ with the desired can be proven from the existence of $\widetilde{a}$ by an $fpqc$-descent by an argument similar to \cite[Prop. 8.4]{Ch2}
and \cite[pp. 92-93]{Hi3}.\\
\end{proof}
\noindent\\
Here is the Step 2 outlined in the introduction of this subsection.\\
\begin{thm} Let $X$ be as before. Then, $X$ is smooth and there exist $\alpha,\beta\in H_{x}(\Z_{(p)})$ such that 
$X=\Delta_{\alpha,\beta}$. Moreover, if (E) holds, then we can take $(\alpha,\beta)=(1,\beta)$ with $\beta$ being a $p$-unit.
\end{thm}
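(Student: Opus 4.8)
The plan is to manufacture a single global $\cO_B$-linear isogeny $\varphi\colon\cA_1\to\cA_2$ over $Y$ realising, fibrewise, the $\cO_B$-linear identifications of $p$-divisible groups furnished by Proposition 4.9, and then to read off from $\varphi$ the Hecke element cutting out $X$. Recall that under the running assumptions $Y$ is a smooth proper curve over $\F$: its dimension equals $\dim\cX=\dim V=1$ by Proposition 4.6, and the normalisation of a curve over the perfect field $\F$ is regular; moreover, by (E) the projections $\Pi_i\colon Y\to V$ are étale finite over $x^2$, and $\cA_i=\Pi_i^{*}\cA$.

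First I would pass from the local statement of Proposition 4.9 to the function field $\kappa(Y)$. By that proposition there is an open dense $U\subset Y^{ord}$ over an étale cover $\widetilde U$ of which the identification $a_x$ extends to an $\cO_{B,p}$-linear isomorphism $\widetilde a\colon\epsilon\cA_1[p^\infty]\iso\epsilon\cA_2[p^\infty]$ of Barsotti--Tate groups; since the formal extension $\widehat a$ over each $\widehat Y_y$ is canonical, $\widetilde a$ is Galois-equivariant for $\widetilde U/U$ and descends, so restricting to the generic point we obtain an $\cO_{B,p}$-linear isomorphism of the $\epsilon$-parts of the $p$-divisible groups of $\cA_1$ and $\cA_2$ over $\kappa(Y)$. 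Now I would invoke de Jong: the abelian varieties $\cA_{1,\kappa(Y)}$ and $\cA_{2,\kappa(Y)}$ carry compatible $\cO_B$-actions and polarisation classes, and an $\cO_{B,p}$-linear isomorphism of the $\epsilon$-parts of their $p$-divisible groups; by de Jong's theorem on homomorphisms of $p$-divisible groups over a function field of characteristic $p$ --- the $p$-part of the Tate conjecture, \cite{dJ} --- the natural map $\Hom(\cA_1,\cA_2)\otimes\Z_p\to\Hom(\cA_1[p^\infty],\cA_2[p^\infty])$ is bijective over $\kappa(Y)$. Hence $\widetilde a$ comes from a quasi-isogeny, and, after clearing denominators, from an isogeny which, because bijectivity detects the commutation relations on the $p$-divisible side, is automatically $\cO_B$-linear and polarisation-compatible up to a scalar. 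Since $\cA_1,\cA_2$ are abelian schemes over the smooth proper curve $Y$ and $\widetilde a$ is an isomorphism, this isogeny extends over all of $Y$ (de Jong's extension of homomorphisms of $p$-divisible groups over a normal base) to an $\cO_B$-linear isogeny $\varphi\colon\cA_1\to\cA_2$ of degree prime to $p$, and there is a unique $g\in G(\A_{\Q,f}^{(p)})$ with $\varphi\circ\eta_1^{(p)}=\eta_2^{(p)}\circ g$.

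Then I would identify $X$ and locate $g$. The morphism $\Pi_2$ classifies $(\cA_2,\iota_2,\bar\lambda_2,\eta_2^{(p)})$ and $g\circ\Pi_1$ classifies $(\cA_1,\iota_1,\bar\lambda_1,\eta_1^{(p)}\circ g)$, and $\varphi$ is an isomorphism between these objects of $\cF^{(p)}$, so $\Pi_2=g\circ\Pi_1$ as morphisms $Y\to Sh$. Hence $X=(\Pi_1,\Pi_2)(Y)=\{(z,g\cdot z):z\in V\}$, the image being closed by properness; in particular $g$ preserves the component $V$, $X=\Delta_{1,g}$ in the notation of Definition 4.4, and $X$ is isomorphic to $V$ and therefore smooth. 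To pin down $g$, choose $y_0\in Y$ over $x^2$; then $\Pi_1(y_0)=\Pi_2(y_0)=x$, so $\varphi_{y_0}$ is an isogeny $A_x\to A_x$ commuting with the $\cO_B$-action, hence an element of $\End^{0}_{\cO_B}(A_x)^{\times}=K^{\times}$, and integral, i.e.\ in $\cO\setminus\{0\}$; its degree being prime to $p$ forces $v_{\mathfrak p}(\varphi_{y_0})=v_{\mathfrak p^{c}}(\varphi_{y_0})=0$, so $\varphi_{y_0}\in\cO_{(p)}^{\times}=H_x(\Z_{(p)})$ (cf.\ the description of the local stabiliser in \S3.2). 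The two prime-to-$p$ level structures at $x$ differ only by an automorphism of the point $x$, again an element of $H_x(\Z_{(p)})$, so $g\in H_x(\Z_{(p)})$ and $g$ is a $p$-unit; this is consistent with the germ of $\Delta_{1,g}$ at $x^2$ being governed by the reciprocity law $t\mapsto t^{g^{1-c}}$ (Lemma 3.5), matching the equation $t_1=t_2^{a_x}$ with $a_x\in\Z_p^{\times}$ from Proposition 4.9. Finally, if (E) fails one first replaces $\Pi_1,\Pi_2$ by $\alpha_1^{-1}\circ\Pi_1,\alpha_2^{-1}\circ\Pi_2$ for suitable $\alpha_i\in H_x(\Z_{(p)})$ to arrange (E), as in \S4.4, and undoing these substitutions gives $X=\Delta_{\alpha,\beta}$ with $\alpha,\beta\in H_x(\Z_{(p)})$.

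The hardest part will be the de Jong step: correctly invoking the $p$-adic Tate conjecture over $\kappa(Y)$ and then extending the resulting isogeny over the whole normalisation $Y$ while keeping the $\cO_B$-action, the polarisation class and the prime-to-$p$ level structure under control; the descent of $\widetilde a$ from the étale cover down to $\kappa(Y)$ and the precise bookkeeping that places $g$ inside the local stabiliser $H_x(\Z_{(p)})$ at the CM point are the other delicate points, whereas the reduction $X=\Delta_{1,g}$ and the smoothness of $X$ are then formal.
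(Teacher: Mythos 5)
Your proposal follows the paper's own strategy: use Proposition 4.9 to obtain a generic $\cO_{B,p}$-linear identification of the $\epsilon$-parts of the $p$-divisible groups, upgrade it via de Jong's theorem to a global $\cO_B$-linear isogeny $\varphi\colon\cA_1\to\cA_2$ over $Y$, and then specialise at a point of $Y$ above $x^2$ to see that the associated Hecke element lies in $H_x(\Z_{(p)})$ and is a $p$-unit, yielding $X=\Delta_{1,\beta}$. The only cosmetic differences are that the paper packages the de Jong step as determining $\End^{\Q}(\cA_1\times_Y\cA_2)\in\{B^2,M_2(B)\}$ by contradiction rather than working with $\Hom(\cA_1,\cA_2)$ directly, and concludes $\Delta_{1,\beta}=X$ by matching formal germs at $x^2$ and fpqc descent rather than by the identity $\Pi_2=g\circ\Pi_1$ of classifying morphisms; these are equivalent reformulations of the same argument.
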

\begin{proof} Recall that we have dominant projections $\Pi_{i}:Y\rightarrow V_{i}$. It follows that 
$\End(\cA_i)\otimes \Q=B$. In view of the discussion before (E), we suppose that (E) holds.\\
\\
Let $\cA_{/Y}=\cA_{1}\times_{Y}\cA_2$ and $\End^{\Q}(\cA)=\End(\cA_{/Y})\otimes \Q$. 
We now have two possibilities for $\End^{\Q}(\cA)$, namely $B^2$ or $M_2(B)$. 
Based on de Jong's theorem in \cite{dJ}, we first show that $\End^{\Q}(\cA)=M_2(B)$. 
Along with Serre-Tate deformation theory, we later finish the proof.\\
\\
We first suppose that $\End^{\Q}(\cA)=B^2$. We proceed in a way similar to the proof of Theorem 4.6. 
Let $U$ be as in Proposition 4.9. In view of (E) and (PL), it follows that
\beq \End_{\epsilon \cO_{B,p}}{\epsilon \cA[p^\infty]_{/U}}=M_{2}(\cO_{M_{0}}).\eeq
By Theorem 2.6 of \cite{dJ}, we have 
\beq \End_{\cO_{B,p}}(\cA[p^\infty]_{/U}) = \End_{\cO_B}(\cA_{{/U}}) \otimes \Z_{p}.
\eeq
This contradicts our assumption that $\End^{\Q}(\cA)=B^2$. Thus, $\End^{\Q}(\cA)=M_2(B)$.\\
\\
Let $e_{i} \in \End^{\Q}(\cA)$ be two commuting idempotents such that $e_{i}(\cA)=\cA_{i/Y}$. As $\End^{\Q}( \cA)=M_2(B)$, 
there exists $\tilde{\beta} \in GL_{2}(\cO_{B,(p)})$ such that $\tilde{\beta}\circ e_1 = e_2$. 
Thus, we have as isogeny $\tilde{\beta}:\cA_{1}\rightarrow \cA_{2/Y}$. 
In particular, there exists $g\in G(\A_{\Q,f}^{(p)})$ such that $\varphi \circ \eta_{1}^{(p)}=\eta_{2}^{(p)}\circ g$. 
Specialising to the fiber of $\cA_{i}$ at $y$, 
we conclude that $g$ is induced by an element $\beta \in H_{x}(\Z_{(p)})$. Thus, the equation defining 
$\widehat{Y}_{y} \subset \widehat{S} \times \widehat{S}$ is given by $t_{2}=t_{1}^{\beta^{1-c_{x}}}$. 
In view of (E) and the argument in $\cO_{B}$-linearity step of the proof of Proposition 4.9, we conclude that $\beta^{1-c_{x}}\in \Z_{(p)}^{\times}$. 
Here $c_{x}$ denotes the non-trivial element in the Galois group of the imaginary quadratic extension over the rationals associated to $x$. 
Thus, we suppose that $\beta \in \Z_{p}^{\times}$. 
Recall that in the 
proof of (1) of Proposition 4.6 we have $\widehat{X}=\bigcup_{L\in I} \widehat{\mathbb{G}}_m \otimes_{\Z_p} L$, where $L$ is an 
$\Z_{p}$-direct summand of $X_{*}(\widehat{S^2})$ and the union is finite. 
Moreover, the points of $Y$ above $x^2$ are indexed by $L\in I$. More precisely, if $y$ corresponds to $L$, then $\widehat{Y}_{y}=\widehat{\mathbb{G}}_m \otimes_{\Z_p} L$. 
Thus, $\widehat{\Delta_{1,\beta}}_{,x^2}=\widehat{Y}_{y}\subset \widehat{X}$. 
By an $fpqc$-descent, it follows that $\Delta_{1,\beta}\subset X$. As $X$ is irreducible, we conclude that $\Delta_{1,\beta}= X$.\\
\end{proof}
\noindent\\
Thus, we have proven Theorem 4.5 for $n=2$.\\
\\
\\
\subsection{Global structure II} In this subsection, 
we study a class of global structure of locally stable subvarieties of $n$-fold self-product of the Shimura curve for $n\geq 2$.\\
\\
Let the notation and hypotheses be as in \S4.2.\\
\\
Moreover, we suppose that $\cX\neq S^n$.\\
\begin{thm} 
Let $X$ be as before. Then, $X$ is smooth and there exist $\alpha,\beta\in H_{x}(\Z_{(p)})$ such that $X=V^{n-2} \times \Delta_{\alpha,\beta}$ up to a permutation of the first $n-1$ factors.
\end{thm}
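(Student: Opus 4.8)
The plan is to follow the strategy of Chai (\cite{Ch1}, \cite{Ch2}) and Hida (\cite[\S3]{Hi3}), arguing by induction on $n$; the base case $n=2$ is Theorem 4.10, the case $\cX=S^2$ being excluded by hypothesis. For $n\geq 3$ the steps are: (i) pass to the normalisation $\Pi_\cY:\cY\to\cX$ (and $\Pi:Y\to X$), as in \S4.3, since smoothness of $X$ is only available at the end; (ii) reduce to the set-up of Theorem 4.5 and use Proposition 4.6 to describe $\widehat\cY_y$; (iii) use Chai's local rigidity and de Jong's theorem to pin down $\widehat\cY_y$ as a formal subtorus of $\widehat\cO_{V^n,x^n}\cong\widehat{\mathbb{G}}_m^n$; (iv) globalise by an $fpqc$-descent as in Theorem 4.10 and conclude $Y=X$. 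For (ii): each coordinate projection $\pi_j:\cX\to S_j$ is dominant --- in the situation of \S4.2 because $\phi_V$ carries the $j$-th tensor factor to $a_j\circ f_j$, non-constant as $f_j$ is non-constant and $a_j$ an automorphism --- so, permuting the first $n-1$ factors, I may assume $\pi_\cX':\cX\to S'$ onto the first $n-1$ factors is dominant; if no $(n-1)$-fold projection of $\cX$ is dominant, $X$ factors through a lower self-product of $V$ and the inductive hypothesis applies. Assuming $\pi_\cX'$ dominant and $\cX\neq S^n$, Proposition 4.6 gives that $\cX$ is finite over $S'$, that $\dim\cX=\dim\cY=n-1$, and that $\cY$ has only finitely many points $y$ over $x^n$, each with $\widehat\cY_y=\widehat{\mathbb{G}}_m\otimes_{\Z_p}L_y$ for a rank-$(n-1)$ direct summand $L_y\subseteq X_*(\widehat{S^n})\cong\Z_p^n$, with $L_y$ independent of $y$ up to isomorphism.

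The heart of the argument is then to determine $L_y$. That $\widehat\cY_y$ is a formal subtorus comes from the local stabiliser principle (\cite[Prop. 6.1]{Ch3}) and Chai's local rigidity (\cite[\S6]{Ch2}): by Lemma 3.5 the diagonal action of $H_x(\Z_p)$ on $\widehat{S^n}\cong\widehat{\mathbb{G}}_m^n$ is scalar multiplication by $\alpha\mapsto\alpha^{1-c}$, and these scalars are dense enough in $1+p\Z_p$ for rigidity to apply. Being a saturated corank-one sublattice, $L_y=\ker(\Z_p^n\xrightarrow{(a_1,\dots,a_n)}\Z_p)$ with $\gcd(a_i)=1$, i.e.\ $\widehat\cY_y=\{\prod_i t_i^{a_i}=1\}$. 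To see that exactly two of the $a_i$ are nonzero I would invoke de Jong's theorem (\cite[Thm. 2.6]{dJ}): via the Kodaira--Spencer isomorphism and the $\epsilon\cO_{B,p}$-structure, $a_i,a_j\neq 0$ forces $\epsilon\cA_i[p^\infty]$ and $\epsilon\cA_j[p^\infty]$ to become isogenous over an étale cover of a dense open $U\subseteq Y^{ord}$, hence $\cA_i\sim\cA_j$ over $U$; if three of the $a_i$ were nonzero, the three corresponding abelian-scheme factors would be mutually isogenous over $U$, so (as in the proof of Theorem 4.10, now with $\End^\Q$ of the triple product equal to $M_3(B)$) two of the coordinates would be Hecke-determined by the third, giving $\dim X\leq n-2$ and contradicting $\dim X=n-1$; and if only one $a_i$ were nonzero it would equal $\pm1$, whence $\widehat\cY_y=\{t_i=1\}$ and $\pi_i$ would fail to be dominant. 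Thus $\widehat\cY_y=\widehat{S^{n-2}}\times\widehat{\Delta_{a,b}}$ in two of the factors, with $\gcd(a,b)=1$.

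Finally I would globalise exactly as in Steps 1--2 of the proofs of Proposition 4.9 and Theorem 4.10, now applied to the linked pair of factors with the remaining $n-2$ factors carried along: the isogeny $\cA_i\sim\cA_j$ over $U$, the equality $\End^\Q=M_2(B)$ on that pair, and de Jong's theorem yield $g\in G(\A_{\Q,f}^{(p)})$ with $\varphi\circ\eta_i^{(p)}=\eta_j^{(p)}\circ g$, and specialising at $x$ shows $g$ is induced by some $\beta\in H_x(\Z_{(p)})$ with $\beta^{1-c}\in\Z_{(p)}^\times$. Then the formal completion of $V^{n-2}\times\Delta_{1,\beta}$ at $x^n$ equals $\widehat\cY_y\subseteq\widehat X_{x^n}$, so by an $fpqc$-descent $V^{n-2}\times\Delta_{1,\beta}\subseteq X$; as $X$ is irreducible of dimension $n-1$ this is an equality, $X$ is smooth, and $Y=X$. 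The hard part, I expect, is the dichotomy in step (iii) --- excluding three-or-more linked factors and the single-linked-factor case --- which forces one to combine de Jong's Tate-conjecture input with the dimension count and a careful analysis of $\End^\Q$ of the product abelian scheme along Chai's lines, all while running the argument on the normalisation $\cY$ and checking that its finitely many points over $x^n$ produce the same sublattice $L_y$.
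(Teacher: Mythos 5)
Your overall architecture (induction, normalisation $\cY\to\cX$, Chai's local rigidity to identify $\widehat\cY_y$ as a formal subtorus $\widehat{\mathbb G}_m\otimes L_y$, $L_y=\ker(a_1,\dots,a_n)$, then de~Jong to globalise) tracks the paper's strategy and is consistent with what the paper itself says about Theorem 4.11 — namely that it can be proved "in the same way as Theorem 4.10" or "by induction on $n$ as in [Hi3, Cor.~3.19]." Ruling out $|S|=1$ by non-dominance of $\pi_i$ and the final globalisation via $fpqc$-descent also match the paper.

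However, the key step of your dichotomy — ruling out three or more nonzero coefficients $a_i$ — rests on a claim that does not hold and, in fact, runs in the wrong direction. You assert that $a_i,a_j\neq 0$ already forces $\epsilon\cA_i[p^\infty]$ and $\epsilon\cA_j[p^\infty]$ to become isogenous over a dense open $U\subseteq Y^{ord}$, and then deduce that a triple of nonzero $a_i$'s gives three mutually isogenous factors and a dimension drop. But the mechanism in Proposition~4.9 that converts a Serre--Tate relation into a homomorphism of Barsotti--Tate groups requires the relevant projection of the local torus to the two linked factors to be a \emph{proper} codimension-one subtorus of $\widehat S\times\widehat S$ — that is exactly what lets one write $t_1=t_2^{a}$ and propagate $a$ by approximating with $H_x(\Z_{(p)})$. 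If a third coefficient $a_k$ is nonzero, then (after normalising so some $a_k$ is a unit) $L_y$ surjects onto the $(i,j)$-coordinate plane $\Z_p^2$, so $\widehat\cY_y$ maps \emph{onto} $\widehat S\times\widehat S$ in those two factors, and no such relation exists. Indeed the implication goes the other way: an isogeny $\cA_i\sim\cA_j$ over $U$ forces the image of $Y$ in $V_i\times V_j$ to lie in a (one-dimensional) Hecke correspondence, hence the image of $L_y$ in $\Z_p^2$ has rank one, which forces $a_k=0$ for every $k\neq i,j$. So in the hypothetical case $|S|\geq 3$ there is actually \emph{no} isogeny between any two of the linked factors, $\End^\Q(\cA_{/Y})=B^{|S|}$, and your ``$M_3(B)$ and a dimension drop'' contradiction never materialises.

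Closing this gap genuinely requires more: for $|S|<n$ one can project $\cX$ to $V^{|S|}$, note the image is again locally stable with all $|S|$ coordinate projections dominant, and invoke the inductive hypothesis to see that its local torus must have only two nonzero exponents, contradicting $|S|\geq 3$; but the case $|S|=n$ still escapes this reduction, and there the contradiction must come from showing that $\End^\Q=B^n$ is incompatible with $\widehat\cY_y$ being a \emph{proper} $H_x(\Z_p)$-stable subtorus at all — i.e.\ from the Tate-linearity statement in Chai's theory (\cite{Ch2}, \cite{Ch3}) combined with de~Jong, not from pairwise isogenies. The paper's own proof is quite terse here (it simply asserts that the only two possibilities for $\End^\Q(\cA)$ are $B^n$ or $B^{n-2}\times M_2(B)$ and cites Theorem~4.10), so you are not deviating from a fully-written argument; but your attempt to make that dichotomy explicit through the ``any pair of nonzero $a_i,a_j$ gives an isogeny'' step is precisely where the logic breaks.
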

\begin{proof} The proof of Proposition 4.9 also works for $n\geq 2$.\\
\\
Let $\cA_{/Y}=\cA^{n}\times_{V^n}Y$ and $\End^{\Q}(\cA)=\End( \cA_{/Y})\otimes \Q$. 
We now have two possibilities for $\End^{\Q}(\cA)$, namely $B^n$ or $B^{n-2}\times M_2(B)$. In a same way as in the proof of Theorem 4.10, 
it can be shown that $\End^{\Q}(\cA)=B^{n-2}\times M_2(B)$. Thus, there exists $i<n$ such that $i^{th}$ factor $\cA_{i}$ of $\cA_{/Y}$ 
is isogenous to the last factor $\cA_n$ of $\cA_{/Y}$. \\
\\
Based on Serre-Tate deformation theory, we finish the proof in the same way as the proof of Theorem 4.10.\\
\end{proof}
\noindent\\
This finishes the proof of Theorem 4.5.\\
\\
Theorem 4.11 can also be proven by induction on $n$ (cf. similar to the proof of \cite[Cor. 3.19]{Hi3}).\\
\\
\\
\subsection{Linear independence} In this subsection, 
we prove the linear independence of mod $p$ modular forms as formulated in \S4.1 based on the global structure of locally stable subvarieties.\\
\\
As before, we have the following independence.\\
\begin{thm} 
Let $x$ be a closed ordinary point on the Shimura variety with the local stabiliser $H_x$. 
For $1 \leq i \leq n$, let $a_i \in H_{x}(\Z_p)$ such that 
$a_ia_j^{-1} \notin H_{x}(\Z_{(p)})$ for all $i \neq j$ (\cf \S3.2). 
Let $f_1,...,f_n$ be non-constant mod $p$ modular forms on $V$.
Then, $(a_{i}\circ f_{i})_i$ are algebraically independent 
in the Serre-Tate deformation space $\Spf(\widehat{\cO}_{V,x})$.
\end{thm}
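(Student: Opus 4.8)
The plan is to reduce the assertion to the vanishing of the ideal $\mathfrak b=\ker(\phi_V)$ introduced in \S4.2 -- a statement that no longer mentions the modular forms $f_i$ -- and then to read that vanishing off the structure theory of locally stable subvarieties. For the reduction: each $f_i$, being a non-constant mod $p$ modular form, has germ at $\tilde x$ a non-constant element of the one-dimensional local domain $\cO_{I,\tilde x}$, hence transcendental over the algebraically closed field $\F$; therefore $T_i\mapsto f_i$ defines an injection $\F[T_1,\dots,T_n]\hookrightarrow\cO_{I,\tilde x}\ot_\F\cdots\ot_\F\cO_{I,\tilde x}$, being a tensor product over a field of injective maps. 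Composing with $\phi_I$ and invoking Lemma~4.3 (so that $\mathfrak b_I=0$ as soon as $\mathfrak b=0$) together with the identification $\widehat\cO_{I,\tilde x}\iso\widehat\cO_{V,x}$ of (Et), the resulting homomorphism $T_i\mapsto a_i\circ f_i$ is injective, which is precisely the claimed algebraic independence. So everything comes down to proving $\mathfrak b=0$.

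I would prove $\mathfrak b=0$ by induction on $n$. For $n=1$, $\phi_V$ is the restriction to $\cO_{V,x}$ of the substitution $t\mapsto t^{a_1^{1-c}}$ on $\widehat\cO_{V,x}$; since $a_1^{1-c}\in\Zp^\times$ (Lemma~3.5, whose proof applies verbatim to $H_x(\Zp)$), this substitution is an automorphism of the formal torus $\widehat S$, and $\phi_V$ is injective. Now let $n\geq 2$ and suppose, for contradiction, that $\mathfrak b\neq 0$. By the observation (EQ), $\mathfrak b$ is a prime ideal stable under the diagonal action of $H_x(\Z_{(p)})$, so $\cX=\Spec(\cO_{V,x}\ot_\F\cdots\ot_\F\cO_{V,x}/\mathfrak b)$ and its Zariski closure $X\subseteq V^n$ are of the shape (LS1)--(LS3), with $X\neq V^n$. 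The projection $\pi''_{\cX}\colon\cX\to S''=S$ to the last factor is dominant, being on coordinate rings the injection $h\mapsto a_n\circ h$ (again an automorphism of $\widehat\cO_{V,x}$ restricted to $\cO_{V,x}$); and the projection $\pi'_{\cX}\colon\cX\to S'$ to the first $n-1$ factors is dominant by the inductive hypothesis, since its kernel is the ideal $\mathfrak b$ attached to $(a_1,\dots,a_{n-1})$, which satisfies the same hypotheses and hence vanishes. Therefore the structure theorem for locally stable subvarieties (Theorem~4.5, established through Theorems~4.10 and~4.11) applies, and as $X\neq V^n$ it gives $X=V^{n-2}\times\Delta_{\alpha,\beta}$ up to a permutation of the first $n-1$ factors; in particular $\dim X=n-1$.

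On the other hand, by Theorem~3.1 the formal completion $\Spf(\widehat\cO_{V^n,x^n})$ is the formal torus $\widehat{\mathbb{G}}_{m}^{\,n}$ in Serre--Tate coordinates, and -- arguing as in the proof of Proposition~4.6 (Chai's local rigidity) and using the reciprocity law of Lemma~3.5 -- the completion $\widehat X_{x^n}=\widehat\cX_{x^n}$ is the one-dimensional formal subtorus parametrised by $t\mapsto(t^{a_1^{1-c}},\dots,t^{a_n^{1-c}})$; in particular $\dim X=1$. Comparing the two computations of $\dim X$ forces $n=2$ and $X=\Delta_{\alpha,\beta}$. Now matching the cocharacters of the two descriptions of $\widehat X_{x^2}$ -- $(a_1^{1-c},a_2^{1-c})$ coming from $\phi_V$, and $(\alpha^{1-c},\beta^{1-c})$ coming from $\widehat\Delta_{\alpha,\beta}$ -- gives $(a_1 a_2^{-1})^{1-c}=(\alpha\beta^{-1})^{1-c}$, i.e. $a_1 a_2^{-1}$ acts on the deformation space exactly as the algebraic element $\alpha\beta^{-1}\in H_x(\Z_{(p)})$, contradicting $a_1 a_2^{-1}\notin H_x(\Z_{(p)})$. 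This completes the induction.

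The real work is not in the deduction above -- which is essentially bookkeeping once Theorems~4.5, 4.10 and~4.11 are available -- but in two places that need care. First, the hypothesis that the $f_i$ are non-constant must be kept isolated to the single step where it is used, namely the reduction to $\mathfrak b=0$; this rests on the one-dimensionality of $\cO_{I,\tilde x}$ and on the $t$-expansion principle of \S3.1. Second, the final step, which translates the geometric conclusion $X=\Delta_{\alpha,\beta}$ back into incommensurability of the $a_i$, requires correctly identifying the Serre--Tate exponent of an element $u\in H_x$ as $u^{1-c}$ and keeping track of the subgroup of $H_x(\Zp)$ that acts trivially on $\widehat S$, so that the condition $a_1 a_2^{-1}\notin H_x(\Z_{(p)})$ is invoked exactly where it applies.
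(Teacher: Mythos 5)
Your reduction to $\mathfrak b=0$ is carefully spelled out and correct, and your verification that both projections $\pi_{\cX}'$ and $\pi_{\cX}''$ are dominant (the latter directly from the injectivity of $h\mapsto a_n\circ h$, the former by induction on $n$) matches the paper's route into Theorem~4.5. The cocharacter-matching step for $n=2$ is also sound, modulo the bookkeeping you rightly flag. There is, however, a genuine gap in the step that forces $n=2$. You assert that $\widehat{X}_{x^n}=\widehat{\cX}_{x^n}$ \emph{equals} the one-dimensional formal subtorus parametrised by $t\mapsto(t^{a_1^{1-c}},\dots,t^{a_n^{1-c}})$, so that $\dim X=1$. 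What the definition of $\mathfrak b$ actually gives is only the \emph{containment} $\widehat{\Delta}\subset\widehat{\cX}_{x^n}$: writing $\mathfrak a$ for the kernel of the completed surjection $\widehat{\cO}_{V^n,x^n}\twoheadrightarrow\widehat{\cO}_{V,x}$, one has $\mathfrak b\,\widehat{\cO}_{V^n,x^n}\subset\mathfrak a$, and $\widehat{\cX}_{x^n}$ is cut out by the former ideal while $\widehat{\Delta}$ is cut out by the latter; the inclusion need not be an equality. Chai's local rigidity, as used in the proof of Proposition~4.6, only says that $\widehat{\cX}_{x^n}$ is a finite union of formal subtori $\widehat{\mathbb{G}}_m\otimes_{\Z_p}L$ --- it does not bound the rank of the $L$'s, and nothing in the setup forces $\dim\cX\le 1$: an essentially-of-finite-type $\F$-domain can embed in $\widehat{\cO}_{V,x}$ with Krull dimension anywhere between $1$ and $n$. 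So the conclusion $n=2$ is not justified, and your argument does not cover $n\ge 3$.

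The paper sidesteps this by obtaining the needed one-dimensionality \emph{from the conclusion} of Theorem~4.5 rather than from the definition of $\mathfrak b$: once $X=V^{n-2}\times\Delta_{\alpha,\beta}$ is known, the projection $\cX'$ of $\cX$ to the last two factors is open dense in $\Delta_{\alpha,\beta}$, so $\widehat{\cX'}_{x^2}$ is a one-dimensional irreducible formal subscheme cut out by $t^{\beta^{1-c}}=(t')^{\alpha^{1-c}}$; since $\widehat{\cX'}_{x^2}$ also contains the one-dimensional parametrised diagonal $\{(t^{a_{n-1}^{1-c}},t^{a_n^{1-c}})\}$, the two coincide. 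Matching cocharacters then gives $(a_n a_{n-1}^{-1})^{1-c}=(\beta\alpha^{-1})^{1-c}$ and the contradiction for every $n\ge 2$. Replacing your a priori dimension count by this post-Theorem-4.5 projection to the last two factors closes the gap; the rest of your proof then goes through.
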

\begin{proof} From the definition of $\mathfrak{b}$ (cf. \S4.1), it suffices to show that $\mathfrak{b}=0$. This is equivalent to show that $X=V^n$.\\
\\
When $n=1$, this follows from the definition. Let $n\geq2$. 
We first note that there exists $y$ over $x^n$ such 
that 
$$\widehat{\cY}_{y} \supset \widehat{\Delta}:=\big{\{}(t^{a_{1}},t^{a_{2}},...,t^{a_{n}})|t\in \widehat{S}\big{\}}$$ 
(cf. similar to  \cite[pp.85]{Hi3}). 
When $n=2$, this verifies the hypothesis in Theorem 4.5. When $n\geq2$, the hypothesis in the theorem can be verified by induction on $n$ up to a permutation of the first $n-1$ factors.\\
\\
In view of the theorem, the locally stable subscheme $X$  equals $V^n$ or $V^{n-2} \times \Delta_{\alpha,\beta}$ for some $\alpha,\beta\in H_{x}(\Z_{(p)})$ up to a permutation of the 
first $n-1$ factors. We first suppose that $X=V^{n-2} \times \Delta_{\alpha,\beta}$. Let $\cX'$ be the projection of $\cX$ to the last two factors 
of $S^n$. As $X=V^{n-2} \times \Delta_{\alpha,\beta}$, the equation of $\widehat{\cX'}_{x^2} \subset \widehat{S^2}$ is given by 
$t^{\beta^{1-c_{x}}}=(t')^{\alpha^{1-c_{x}}}$ for $t$ (resp. $t'$) being the Serre-Tate co-ordinate of the second last (resp. last) factor of $\widehat{S^n}$. 
On the other hand, it follows from the definition of $X$ that the equation is also given by $t^{a_{n}}=t'^{a_{n-1}}$. Thus, $a_{n}a_{n-1}^{-1}=(\beta\alpha^{-1})^{1-c_{x}} \in H_{x}(\Z_{(p)})$. 
This contradicts our hypothesis on $a_i$'s and finishes the proof.\\
\end{proof}
\noindent
\begin{remark} 
The independence also plays a key role in \cite{Bu3}. 
In \cite{Bu3}, we consider an $(l,p)$-analogue of the non-triviality of generalised Heegner cycles modulo $p$.
\end{remark}
\noindent\\
\\
\section{$\mu$-invariant of Anticyclotomic Rankin-Selberg $p$-adic L-functions}
\noindent In this section, we prove the vanishing of the $\mu$-invariant of a class of anticyclotomic Rankin-Selberg $p$-adic L-functions. 
In \S5.1, we consider the $p$-depletion of a normalised Jacquet-Langlands transfer of an elliptic Hecke eigenform. 
In \S5.2, we describe generalities regarding the anticyclotomic Rankin-Selberg $p$-adic $L$-functions. 
In \S5.3, we prove the vanishing of the $\mu$-invariant.\\
\\
\subsection{$p$-depletion} In this subsection, we consider the $p$-depletion of a normalised Jacquet-Langlands transfer of an elliptic Hecke eigenform.\\
\\
Let the notation and hypotheses be as in the introduction. 
Let $f \in S_{k}(\Gamma_{0}(N),\epsilon)$ be an elliptic newform such that:\\
\\
(irr) the residual Galois representation $\rho_{f}$ modulo $p$ is absolutely irreducible.\\
\\
The Jacquet-Langlands correspondence implies the existence of a classical modular form $f_{B}$ on $Sh_{B}$ such that the following holds:\\
\\
(JL1) $f_{B}$ is a classical modular form on the Shimura curve $Sh_{B}$ of weight $k$ and neben-character $\epsilon$.\\
(JL2) For positive integer $n$ such that $(n,N^{-})=1$, $f_{B}$ is a Hecke eigenform for the Hecke operator $T_{n}$ with the same eigenvalue as $f$.\\
\\
We normalise $f_{B}$ by requiring that 
\beq
\mu(f_{B})=0.
\eeq
In what follows, we consider $f_{B}$ as being defined over $\cO_{E_{f},\mathfrak{P}}$ and 
regard it as a $p$-adic modular form on the Shimura variety $Sh$. Here $E_{f}$ is the Hecke field and $\mathfrak{P}$ a prime above $p$ induced by $\iota_p$.\\
\\
We have the following useful lemma.\\
\begin{lm} 
Let the notation and assumptions be as above. Then, 
the Hecke eigenform $f_{B}$ is non-constant modulo $p$.\\
\end{lm}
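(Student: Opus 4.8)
The plan is to argue by contradiction, using the hypothesis (irr) to rule out that the mod $p$ reduction $\overline{f}_B\in M(V,\F)$ of $f_B$ is constant. First I would record two facts. The normalisation $\mu(f_B)=0$ fixed in \S5.1 means precisely that $\overline{f}_B$ is a \emph{non-zero} element of $M(V,\F)=H^{0}(I_{/\F},\cO_{I_{/\F}})$. And by (JL2), $\overline{f}_B$ is an eigenform for $T_\ell$ for every prime $\ell\nmid Np$, with eigenvalue $\overline{a_\ell(f)}=\Tr\,\overline{\rho}_f(\Fr_\ell)$, the last equality being the defining property of $\rho_f$.

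Now suppose, for contradiction, that $\overline{f}_B$ is constant, i.e. lies in the line $\F\subset M(V,\F)$; being non-zero it is then a non-zero scalar. The key point is that a non-zero constant, regarded as a weight-$k$, nebentype-$\epsilon$ Hecke eigenform, carries the \emph{Eisenstein} eigensystem: the standard computation --- applying the weight-$k$ normalised sum of $\ell$-isogeny correspondences to a constant --- shows that $T_\ell$ acts on it by $1+\epsilon(\ell)\ell^{\,k-1}$ for every $\ell\nmid Np$ (the precise shape of the Eisenstein eigenvalue, e.g. whether it is $1+\epsilon(\ell)\ell^{k-1}$ or $\epsilon(\ell)+\ell^{k-1}$, will be immaterial). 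Combining with the previous paragraph,
\[
\Tr\,\overline{\rho}_f(\Fr_\ell)\ \equiv\ 1+\epsilon(\ell)\,\ell^{\,k-1}\pmod{\mathfrak{P}}\qquad\text{for every }\ell\nmid Np .
\]

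To conclude I would invoke Chebotarev density and the Brauer--Nesbitt theorem. Since $\det\overline{\rho}_f(\Fr_\ell)\equiv\epsilon(\ell)\ell^{\,k-1}\pmod{\mathfrak P}$, the displayed congruence says that the characteristic polynomial of $\overline{\rho}_f(\Fr_\ell)$ coincides with that of $(\mathbf 1\oplus\det\overline{\rho}_f)(\Fr_\ell)$ for all $\ell$ in a set of density one; hence $\overline{\rho}_f^{\,\mathrm{ss}}\cong\mathbf 1\oplus\det\overline{\rho}_f$ is reducible, contradicting (irr). Therefore $\overline{f}_B$ is non-constant, which is the assertion.

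The only non-formal ingredient is the eigenvalue computation for a constant in the present quaternionic setting. One has to check that ``constant'' is the correct notion here (a section of $\cO_{I_{/\F}}$, so that $\F\subset M(V,\F)$ is the relevant eigenline, with the Hecke operators normalised as they are inherited from the classical weight-$k$ form $f_B$ on $Sh_B$), and that the $\cO_B$-action and the idempotent $\epsilon$ of \S2.3 do not disturb the classical $\ell$-isogeny computation at primes $\ell\nmid N^{-}p$. At such primes the Hecke correspondences on $Sh_B$ are the usual ones --- induced by $\ell$-isogenies of the underlying abelian surfaces compatible with the $\cO_B$-structure --- so the computation reduces to the classical Eisenstein-eigenvalue computation; the primes dividing $N^{-}$ never intervene, a density-one set of the remaining primes already determining $\overline{\rho}_f^{\,\mathrm{ss}}$.
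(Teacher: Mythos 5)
Your proposal is correct and is precisely an unpacking of the paper's own two-sentence proof: the paper asserts that if $f_B$ were constant mod $p$ then its Hecke eigensystem would be Eisenstein, contradicting (irr), and you have simply filled in the (standard) content of that assertion — the eigenvalue computation for a constant, and the Chebotarev plus Brauer--Nesbitt step showing an Eisenstein eigensystem forces $\overline{\rho}_f^{\,\mathrm{ss}}$ to be reducible.
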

\begin{proof} 
If the Hecke eigenform $f_{B}$ is constant modulo $p$, then the Hecke eigensystem is Eisenstein. 
This contradicts (irr).\\
\end{proof}
\noindent\\
\\
A class of anticyclotomic Rankin-Selberg $p$-adic L-functions is constructed via toric periods of the $p$-depletion of $f_{B}$.\\
\\
Let 
\beq
f_{B}^{(p)}=f_{B}|_{VU-UV}
\eeq
be the $p$-depletion of $f_{B}$. Here $V$ and $U$ are Hecke operators in \cite[\S3.6]{Br}.\\
\\
We recall the following lemma.\\
\begin{lm} 
Let the notation be as above. 
Let $f_{B}(t)=\sum_{\xi \in \Z_{\geq 0}} a(\xi,f_{B}) t^{\xi}$ be the $t$-expansion of $f_{B}$ around $x$.
We have 
$$f_{B}^{(p)}(t)= \sum_{\xi \in \Z_{\geq 0}, p\ndivide \xi} a(\xi,f_{B}) t^{\xi}.$$
\end{lm}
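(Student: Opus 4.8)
The plan is to deduce the lemma from Proposition 3.6 by recognising the $p$-depletion operator $VU-UV$ as one of the operators $f\mapsto f|\phi$ studied in \S3.3. Concretely, I would show that on the space of $p$-adic modular forms on $Sh$ the operator $VU-UV$ agrees, with the sign conventions of \cite[\S3.6]{Br}, with the operator $f\mapsto f|\phi_{0}$, where $\phi_{0}\colon \Z/p\Z\to\{0,1\}$ is the characteristic function of $(\Z/p\Z)^{\times}=\{u: p\ndivide u\}$; granting this, Proposition 3.6 gives immediately
$$f_{B}^{(p)}(t)=f_{B}|\phi_{0}(t)=\sum_{\xi\in\Z_{\geq 0}}\phi_{0}(\xi)\,a(\xi,f_{B})\,t^{\xi}=\sum_{\xi\in\Z_{\geq 0},\ p\ndivide\xi}a(\xi,f_{B})\,t^{\xi}.$$

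To carry out the identification, I would first recall from \cite[\S3.6]{Br} the definitions of the Atkin operator $U=U_{p}$ and the degeneracy operator $V$, together with the elementary relation $UV=\id$ on $p$-adic modular forms of level prime to $p$; in the language of \S3.3 this says $UV$ is the operator attached to the constant function $\mathbf{1}_{\Z/p\Z}$. It then remains to compute the action of $VU$ on the $t$-expansion around the CM point $x$. For this I would invoke the mechanism already used in the proof of Proposition 3.6, namely that the isogeny action of $\alpha(u/p^{r})$ on the Igusa tower preserves the Serre--Tate deformation space $\widehat S$ and acts on the Serre--Tate coordinate through a scaling by a $p$-power root of unity (\cite[Lem. 4.14]{Br}); unwinding the double-coset definition of $VU$ in these terms and summing over $u\in\Z/p\Z$ produces the projection $f_{B}(t)=\sum_{\xi}a(\xi,f_{B})t^{\xi}\mapsto \sum_{p\mid\xi}a(\xi,f_{B})t^{\xi}$, i.e. $VU$ is the operator attached to the characteristic function $\mathbf{1}_{\{0\}}$ of $0\in\Z/p\Z$. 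Since $\phi\mapsto(f\mapsto f|\phi)$ is linear in $\phi$ (clear from the formula in Proposition 3.6), and $\mathbf{1}_{\{0\}}-\mathbf{1}_{\Z/p\Z}=-\phi_{0}$, we conclude that $VU-UV$ is, up to the sign recorded in \cite[\S3.6]{Br}, the operator $f\mapsto f|\phi_{0}$.

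The genuinely nontrivial point is the middle step: translating the adelic double-coset definitions of $U$ and $V$ into their effect on the Serre--Tate parameter $t$ at the CM point. This is exactly the content of \cite[Lem. 4.14]{Br} and of the computation inside the proof of Proposition 3.6, so once those are in hand the remaining work is a single Fourier-orthogonality identity on $\Z/p\Z$, which Proposition 3.6 already packages. The hypotheses on $N^{-}$, the idempotent $\epsilon$, and (irr) play no role here; the lemma is a purely formal statement about how the $p$-depletion operator acts on a Serre--Tate expansion.
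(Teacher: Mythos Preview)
Your approach is correct but takes a longer route than the paper. The paper's proof is a single citation: it invokes \cite[Prop.~4.17]{Br}, which directly records the effect of the operators $U$ and $V$ (and hence of the $p$-depletion $VU-UV$) on Serre--Tate expansions around a CM point. You instead factor the argument through Proposition~3.6 by identifying $VU-UV$ with $\pm(\,\cdot\,|\phi_{0})$ for $\phi_{0}=\mathbf{1}_{(\Z/p\Z)^{\times}}$. That identification is valid, but establishing it --- in particular that $VU$ acts as the translation average attached to $\mathbf{1}_{\{0\}}$ and that $UV=\id$ on the relevant space of $p$-adic forms --- requires exactly the isogeny computations in \cite{Br} that feed into Prop.~4.17 anyway (indeed you appeal to \cite[Lem.~4.14]{Br}, which is the key input to both Prop.~3.6 here and Prop.~4.17 there). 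So you are essentially re-deriving Brooks's Prop.~4.17 from its ingredients rather than citing it.

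What your approach buys is that it stays inside the $g|\phi$ formalism of \S3.3, making the link with Lemma~5.4 (the decomposition $f_{B}^{(p)}=\sum_{u\in\cU_{p}} f_{B,u}$) completely transparent: both statements become instances of Proposition~3.6 for different choices of $\phi$. The cost is that you must track the normalisation constants (the $p^{-r/2}$ in the Fourier transform \S3.3 versus the normalisation of $U,V$ in \cite[\S3.6]{Br}) and the sign, which you flag but do not resolve; these are routine but need to be written out for the argument to be complete.
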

\begin{proof} 
This readily follows from \cite[Prop. 4.17]{Br}.\\
\end{proof}
\noindent\\
We have the following proposition regarding the $p$-integrality of the $p$-depletion.\\
\begin{prop} 
Let the notation and assumptions be as above. 
We have 
$$ \mu(f_{B}^{(p)})=0.$$ 
Moreover, the $p$-depletion is non-constant modulo $p$.\\
\end{prop}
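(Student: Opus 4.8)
The plan is to derive both assertions directly from the preceding lemmas, via the $t$-expansion principle. First I would observe that by Lemma 5.1 the Jacquet–Langlands transfer $f_{B}$ is non-constant modulo $p$, and by our normalisation (5.1) we have $\mu(f_{B})=0$; equivalently, writing $f_{B}(t)=\sum_{\xi\geq 0} a(\xi,f_{B})t^{\xi}$ for the $t$-expansion around $x$, all the coefficients $a(\xi,f_{B})$ are $p$-integral and at least one of them is a $p$-adic unit. The task is to transfer these two properties from $f_{B}$ to $f_{B}^{(p)}$.

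Next I would invoke Lemma 5.2, which identifies the $t$-expansion of the $p$-depletion as
$$
f_{B}^{(p)}(t)=\sum_{\xi\in\Z_{\geq 0},\ p\ndivide\xi} a(\xi,f_{B})\, t^{\xi},
$$
that is, it simply deletes the coefficients indexed by $\xi$ divisible by $p$. The inequality $\mu(f_{B}^{(p)})\leq \mu(f_{B})=0$ is then immediate, and since $\mu$ is non-negative we get $\mu(f_{B}^{(p)})=0$; in down-to-earth terms, passing to a subset of the coefficients cannot decrease $p$-integrality, so all $a(\xi,f_{B})$ with $p\ndivide\xi$ remain $p$-integral.

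The one genuine point to verify is that the $p$-depletion does not accidentally kill the ``witness'' coefficient, i.e. that $\mu(f_{B}^{(p)})$ is not strictly larger, and likewise that $f_{B}^{(p)}$ remains non-constant modulo $p$. This is where (irr) re-enters: I would argue that if $f_{B}^{(p)}$ were constant modulo $p$ — equivalently if $a(\xi,f_{B})\equiv 0\pmod{\mathfrak P}$ for every $\xi$ with $p\ndivide\xi$ — then combined with Lemma 5.2 the mod $p$ $t$-expansion of $f_{B}$ would be supported only on $\xi$ divisible by $p$, forcing $f_{B}\bmod p$ to be a $p$-th power in the Serre–Tate coordinate; by the Hecke-equivariance of the $t$-expansion (Proposition 3.6) and the $t$-expansion principle (cf. (Ir)) this would mean the Hecke eigensystem of $f_{B}$ modulo $p$ is in the image of Frobenius, hence Eisenstein, contradicting (irr) exactly as in the proof of Lemma 5.1. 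Therefore some $a(\xi,f_{B})$ with $p\ndivide \xi$ is a $\mathfrak P$-unit, which gives simultaneously $\mu(f_{B}^{(p)})=0$ and the non-constancy of $f_{B}^{(p)}$ modulo $p$.

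The main obstacle I anticipate is precisely this last step: ruling out the possibility that the mod $p$ reduction of $f_{B}$ is concentrated on coefficients $\xi\equiv 0\pmod p$. Making this rigorous requires knowing that the operator $U$ (or its effect $a(\xi,\cdot)\mapsto a(p\xi,\cdot)$ on $t$-expansions) interacts with the Hecke action in the expected way, so that $f_{B}\bmod p$ being a Frobenius twist is detected at the level of Hecke eigensystems; the irreducibility hypothesis then closes the argument. Everything else is bookkeeping with $\mu$-invariants and the already-established $t$-expansion formulas.
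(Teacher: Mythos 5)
There is a genuine gap, and the route is quite different from the paper's.

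First, a sign error that matters for the logic: from Lemma 5.2 the $t$-expansion of $f_{B}^{(p)}$ is obtained by \emph{deleting} the coefficients $a(\xi,f_{B})$ with $p\mid\xi$, so the immediate consequence is $\mu(f_{B}^{(p)})\geq \mu(f_{B})=0$, not $\leq$. Passing to a subset of coefficients can only \emph{raise} the $\mu$-invariant; the inequality you call ``immediate'' is precisely the nontrivial one. You do then notice that the real content is that the surviving coefficients still contain a unit, which is correct, but the argument you offer for it does not hold up. Knowing that $f_{B}\bmod p$ has its $t$-expansion supported on $\xi\equiv 0\pmod p$ (equivalently, is a $p$-th power in the Serre--Tate coordinate) does not imply that the mod $p$ Hecke eigensystem of $f_{B}$ is Eisenstein. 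Being ``in the image of Frobenius'' is strictly weaker than being Eisenstein --- $V$ of an arbitrary cuspidal eigenform furnishes non-Eisenstein examples --- and none of Proposition 3.6, the $t$-expansion principle (Ir), or Lemma 5.1 lets you pass from ``$t$-expansion around a CM point is a $p$-th power'' to a conclusion about the Hecke eigensystem. Lemma 5.1 addresses only the stronger condition of $f_{B}$ being \emph{constant} mod $p$; you would be invoking a much finer statement without proof.

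The paper's actual proof goes a different way entirely and does not touch the $t$-expansion. It uses the $p$-integrality criterion of Prasanna (\cite[Prop.~2.9]{Pr}), which expresses $\mu(f_{B})$ as a minimum over auxiliary imaginary quadratic fields $M$ and unramified characters $\chi$ of $\mu\bigl(L_{\chi}(f_{B})/\Omega_{M}^{2k}\bigr)$, the normalised toric periods. The normalisation $\mu(f_{B})=0$ in (5.1) forces this minimum to vanish. Then the eigenform relation $L_{\chi}(f_{B}^{(p)})=\bigl(1-\chi^{-1}(\overline{\mathfrak{p}})a_{p}+\chi^{-2}(\overline{\mathfrak{p}})\epsilon(p)p^{k-1}\bigr)L_{\chi}(f_{B})$ (from \cite[Prop.~8.9]{Br}) transfers the vanishing of the minimum to $f_{B}^{(p)}$, and a second application of the criterion gives $\mu(f_{B}^{(p)})=0$; the non-constancy mod $p$ then follows from Lemma 5.1. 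The heavy lifting is done by Prasanna's criterion and the Euler-factor identity for toric periods, neither of which appears in your argument. If you want to repair your approach you would need an honest proof that a mod $p$ eigenform on the Shimura curve whose $t$-expansion at a CM point is a $p$-th power must have Eisenstein (or otherwise excluded) residual representation --- which is not in the paper and would be a substantial new claim.
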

\begin{proof} 
The proof is based on the $p$-integrality criterion in \cite[Prop. 2.9]{Pr}.\\
\\
In view of the criterion and (5.1), we have 
\beq
\min_{M,\chi}\mu\bigg{(}\frac{L_{\chi}(f_{B})}{\Omega_{M}^{2k}}\bigg{)} = 0.
\eeq 
Here $M$ is an imaginary quadratic extension of $\Q$ such that it has an embedding into the indefinite quaternion algebra $B$, $p$ splits in $M$ and 
$p$ does not divide the class number of $M$. Moreover, $\chi$ is an unramified Hecke character over $M$ with infiinity type $(k,0)$, 
$L_{\chi}(f_{B})$ the toric period of the pair $(f,\chi)$ and $\Omega_{M}$ the CM period (\cf \cite[\S2.3]{Pr}).\\
\\
As $f_{B}$ is a Hecke eigenform, we have
\beq
L_{\chi}(f_{B}^{(p)})=(1-\chi^{-1}(\overline{\mathfrak{p}})a_{p}+\chi^{-2}(\overline{\mathfrak{p}})\epsilon(p)p^{k-1})L_{\chi}(f_{B})
\eeq 
(\cf \cite[Prop. 8.9]{Br}). 
Here $\mathfrak{p}$ is a prime above $p$ induced by the $p$-adic embedding $\iota_p$ as before and $a_{p}$ denotes the $T_{p}$-eigenvalue of $f$.\\
\\
It thus follows that
\beq
\min_{M,\chi}\mu\bigg{(}\frac{L_{\chi}(f_{B}^{(p)})}{\Omega_{M}^{2k}}\bigg{)} = 0.
\eeq
In view of the criterion, this finishes the proof.\\
\\
``Moreover'' part now immediately follows from the previous lemma.\\
\end{proof}
\noindent\\
For later purposes, we introduce the following modular forms related to the $p$-depletion.\\
\\
Let $\cU_{p}$ be the torsion subgroup of $\Z_{p}^\times$. For $u \in \cU_{p}$, let $\phi_{u}:\Z/p\Z \rightarrow \cW$ be the indicator function corresponding to $u$.\\
\\
Let $f_{B,u}$ be the modular form given by
\beq
f_{B,u}=f_{B}|\phi_{u}. 
\eeq
\begin{lm} 
Let the notation and assumptions be as above. 
We have 
\beq
f_{B}^{(p)}=\sum_{u\in \cU_{p}} f_{B,u}.
\eeq
\end{lm}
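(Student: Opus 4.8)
The plan is to compute the Serre--Tate $t$-expansions of both sides around the CM point $x$ and to conclude by the $t$-expansion principle for $p$-adic modular forms (\cf \cite[Ch.~8]{Hi1}). Write $f_{B}(t)=\sum_{\xi\in\Z_{\geq 0}}a(\xi,f_{B})\,t^{\xi}$ for the $t$-expansion of $f_{B}$ around $x$. By Proposition 3.6 applied with $g=f_{B}$ and $\phi=\phi_{u}$, the $t$-expansion of $f_{B,u}=f_{B}|\phi_{u}$ around $x$ is $f_{B,u}(t)=\sum_{\xi\in\Z_{\geq 0}}\phi_{u}(\xi)\,a(\xi,f_{B})\,t^{\xi}$, where $\phi_{u}(\xi)$ means $\phi_{u}(\xi\bmod p)$. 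Since $\cU_{p}$ is finite, summing over $u\in\cU_{p}$ and interchanging the two summations yields
$$\sum_{u\in\cU_{p}}f_{B,u}(t)=\sum_{\xi\in\Z_{\geq 0}}\Big(\sum_{u\in\cU_{p}}\phi_{u}(\xi)\Big)\,a(\xi,f_{B})\,t^{\xi}.$$

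The next step is to evaluate the inner coefficient $\sum_{u\in\cU_{p}}\phi_{u}(\xi)$. As $p$ is odd, reduction modulo $p$ carries the torsion subgroup $\cU_{p}\subset\Z_{p}^{\times}$ isomorphically onto $(\Z/p\Z)^{\times}$; hence, as $u$ runs over $\cU_{p}$, the residue $u\bmod p$ runs over the nonzero classes of $\Z/p\Z$, each exactly once. Because $\phi_{u}$ is the indicator function of the class of $u$, the function $\sum_{u\in\cU_{p}}\phi_{u}$ on $\Z/p\Z$ is precisely the indicator of $(\Z/p\Z)^{\times}$; that is, $\sum_{u\in\cU_{p}}\phi_{u}(\xi)$ equals $1$ when $p\nmid\xi$ and $0$ when $p\mid\xi$. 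Substituting this and comparing with the $t$-expansion of $f_{B}^{(p)}$ recorded in Lemma 5.2, we obtain
$$\sum_{u\in\cU_{p}}f_{B,u}(t)=\sum_{\xi\in\Z_{\geq 0},\ p\nmid\xi}a(\xi,f_{B})\,t^{\xi}=f_{B}^{(p)}(t).$$

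Finally, $f_{B}^{(p)}$ and $\sum_{u\in\cU_{p}}f_{B,u}$ are $p$-adic modular forms on $Sh$ with the same $t$-expansion around $x$, so the $t$-expansion principle forces them to coincide, which is the assertion. I do not expect any genuine obstacle here; the only point deserving a moment's care is the elementary bookkeeping that the indicators $\phi_{u}$, $u\in\cU_{p}$, partition the prime-to-$p$ residue classes of $\Z/p\Z$, which rests on $p$ being odd so that $\cU_{p}\cong(\Z/p\Z)^{\times}$ under reduction.
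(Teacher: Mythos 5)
Your proof is correct and follows exactly the route the paper intends: the paper's own proof is the one-line citation ``This follows from the $t$-expansion principle, Proposition 3.6 and Lemma 5.2,'' and you have simply filled in the details — computing both $t$-expansions at $x$, observing that the indicators $\phi_u$ for $u\in\cU_p$ partition the prime-to-$p$ residue classes because reduction identifies $\cU_p$ with $(\Z/p\Z)^\times$ for odd $p$, and invoking the $t$-expansion principle to conclude. No discrepancy.
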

\begin{proof}
This follows from the $t$-expansion principle, Proposition 3.6 and Lemma 5.2.\\
\end{proof}
\noindent\\
\\
\subsection{Anticyclotomic Rankin-Selberg $p$-adic L-functions} In this section, we describe generalities regarding a class of anticyclotomic Rankin-Selberg $p$-adic 
$L$-functions. This is a slight reformulation of the results in \cite[\S8]{Br}. 
We adapt the formulation in \cite[\S1.3.7]{Hi5}, \cite[\S8]{Br1} and \cite{Hs3}.\\
\\
Let the notation and hypotheses be as in the introduction. 
Let $\Gamma_{N^{+}} = \varprojlim G_{p^n}$, where $G_{p^{n}}=\Gal(H_{N^{+}p^{n}}/K)$. 
Recall that $\Gamma$ denotes the $\mathfrak{\Z}_{p}$-quotient of $\Gamma_{N^{+}}$. 
We fix a section of the projection $\pi: \Gamma_{N^{+}} \twoheadrightarrow \Gamma$ stable under the action of complex conjugation $c$. 
Let $\cC(\Gamma,\overline{\Z}_{p})$ be the space of continuous functions on $\Gamma$ with values in $\overline{\Z}_{p}$.\\
\\
Let $\Sg_{cc}$ denote the set of Hecke character over $K$ central critical for $f$ i.e. the set of Hecke characters $\lam$ such that $\lam$ of infinity type $(j_{1},j_{2})$ with 
$j_{1}+j_{2}=k$ and $\epsilon_{\lam}=\epsilon_{f}{\bf{N}}^{k}$. 
Let $\Sg_{cc}^{(1)}$ be the subset of Hecke character of $K$ with infinity type $(l_{1},l_{2})$ such that 
$1 \leq l_{1},l_{2} \leq k-1$. Let $\Sg_{cc}^{(2)}$ be the subset of Hecke character over $K$ with infinity type $(l_{1},l_{2})$ such that 
$l_{1} \geq k$ and $l_{2} \leq 0$. Recall that $\mathfrak{X}$ denotes the set of anticyclotomic Hecke characters over $K$ factoring though $\Gamma$.\\
\\
For $\chi \in \Sg_{cc}^{(1)}$ (resp. $\Sg_{cc}^{(2)}$), the global root number of the Rankin-Selberg convolution $L(f ,\chi^{-1},s)$ equals $-1$ (resp. $1$). 
From now, we fix an unramified Hecke character $\chi \in \Sg_{cc}^{(2)}$ with infinity type $(k, 0)$. 
For $\eta \in \Gamma$, note that $\chi\eta \in \Sg_{cc}^{(2)}$.\\
\\
Let $$Cl_-:=K^\x\A_{\Q,f}^{\times}\bksl \A_{K,f}/U_K$$ 
and $Cl_-^{alg}$ the subgroup of $Cl_-$ generated by ramified primes. 
Here $U_{K}=(K\otimes_{\Q}\R)^{\times} \times (\cO \otimes_{\Z}\widehat{\Z})^\times$. 
Let $\mathcal{R}$ be the subgroup of $\A_{K}^\times$ generated by $K_{v}^\times$ for ramified $v$. 
Let $Cl_{-}^{alg} \subset Cl_{-}$ be the subgroup generated by $\mathcal{R}$. Let $\mathcal{D}_{1}$ be a set of representatives for $Cl_{-}/Cl_{-}^{alg}$ in $(\A_{K,f}^{(pN)})^\times$.
Let $\cU^{alg}=U_K\cap (K^\x)^{1-c}$.\\
\\
For $a\in \cD_{1}$, let $\cF_{B}^{(p)}(x(a))$ be the $p$-adic measure on $\Gamma$ such that 
$$ \int_{\Gamma} \binom{x}{n}d\cF_{B}^{(p)}(x(a))= \binom{d}{n} f_{B}^{(p)}(x(a)).                              $$
Here $n$ is a non-negative integer and $d$ denotes the Katz $p$-adic differential operator.\\
\\
We have the following useful result.\\
\begin{lm} 
The power series expansion of the measure $\cF_{B}^{(p)}(x(a))$ regarded as a $p$-adic measure on $\Z_p$ with support in $1+p\Z_p$ equals 
the $t$-expansion of the $p$-depletion $f_{B}^{(p)}$ around the CM point $x(a)$.\\
\end{lm}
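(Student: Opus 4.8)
The plan is to compare the two power series coefficient by coefficient, so that the statement reduces to a single identity about Mahler coefficients. First I would recall that, just as for the series $G_{f,\eta}$ in the introduction, the power series attached to a bounded $p$-adic measure $\mu$ on $\Z_p$ supported in $1+p\Z_p$ is $\sum_{n\geq 0}\big(\int\binom{x}{n}\,d\mu\big)\,T^{n}$. Specialising to $\mu=\cF_{B}^{(p)}(x(a))$ and using its defining property, the coefficient of $T^{n}$ is $\binom{d}{n}f_{B}^{(p)}(x(a))$. Hence the lemma reduces to showing that, for every $n\geq 0$, the scalar $\binom{d}{n}f_{B}^{(p)}(x(a))$ is the coefficient of $(t-1)^{n}$ in the Serre-Tate expansion of $f_{B}^{(p)}$ around $x(a)$, i.e. that the two power series coincide once the Amice variable $T$ is identified with $t-1$.

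Next I would bring in the action of the Katz $p$-adic differential operator $d$ on Serre-Tate expansions. Under the trivialisation of the deformation space $\widehat{S}$ by $\widehat{\mathbb{G}}_{m}$ attached to a $p^{\infty}$-level structure as in (PL), $d$ acts on the $t$-expansion as the invariant derivation $t\,\frac{d}{dt}$, so that $\binom{d}{n}$ sends $t^{\xi}$ to $\binom{\xi}{n}t^{\xi}$; this is the point where Katz's unit-root / Gauss--Manin description of $d$ enters, and I would cite it through \cite[\S4]{Br} and \cite[\S2]{Hi3}. Combined with Lemma 5.2, the $t$-expansion of $\binom{d}{n}f_{B}^{(p)}$ around $x(a)$ is then $\sum_{p\ndivide\xi}\binom{\xi}{n}a(\xi,f_{B})\,t^{\xi}$. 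Evaluating a $p$-adic modular form at the CM point $x(a)$ means specialising its $t$-expansion at the canonical lift, that is at the identity $t=1$ of the formal torus (\cf Definition 3.3), so that $\binom{d}{n}f_{B}^{(p)}(x(a))=\sum_{p\ndivide\xi}\binom{\xi}{n}a(\xi,f_{B})$. On the other hand, rewriting $f_{B}^{(p)}(t)=\sum_{\xi}a(\xi,f_{B}^{(p)})t^{\xi}$ in the local parameter $t-1$ gives precisely $\sum_{n\geq 0}\big(\sum_{\xi}\binom{\xi}{n}a(\xi,f_{B}^{(p)})\big)(t-1)^{n}$, which is the same series. I would also observe that all sums in sight converge — and hence $\cF_{B}^{(p)}(x(a))$ is a genuine bounded measure — thanks to the $p$-depletion: every $\xi$ occurring is prime to $p$, so $|\binom{\xi}{n}a(\xi,f_{B})|_{p}=|a(\xi,f_{B})|_{p}$ uniformly in $n$, which is the same mechanism behind Proposition 5.3.

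The only non-formal point, and the step I expect to require the most care, is pinning down two normalisation facts consistently: that $d$ acts as $t\,\frac{d}{dt}$ on the Serre-Tate expansion around the ordinary CM point $x(a)$, and that the evaluation of a $p$-adic modular form at $x(a)$ is the specialisation of its $t$-expansion at the identity of $\widehat{\mathbb{G}}_{m}$. Both are standard in Katz's Serre-Tate theory and are used in \cite{Br}, and they have to be matched with the Hecke-action formula of Proposition 3.6; granting them, what remains is a bookkeeping of Mahler coefficients. As with Lemmas 5.2 and 5.5, I would anticipate that the statement is, in the end, a formal consequence of the constructions of \cite{Br} together with the $t$-expansion principle.
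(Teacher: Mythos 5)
Your argument is correct and is precisely the calculation underlying \cite[Prop.~8.1]{Br1}, which the paper's proof simply cites: the Amice coefficient of the measure is $\binom{d}{n}f_{B}^{(p)}(x(a))$, the Katz operator $d$ acts as $t\frac{d}{dt}$ on Serre--Tate expansions, and evaluation at $x(a)$ is specialisation at the identity $t=1$, which after matching $T$ with $t-1$ gives the claimed equality of power series. One minor slip worth flagging: it is \emph{not} true that $|\binom{\xi}{n}|_{p}=1$ whenever $p\nmid\xi$ (e.g.\ $\binom{7}{2}=21$ has positive $3$-adic valuation); your convergence/boundedness observation only needs $|\binom{\xi}{n}|_{p}\le 1$ together with the $p$-integrality of the $a(\xi,f_{B})$ coming from the normalisation (5.1), and the more essential role of the $p$-depletion here is to place the surviving exponents $\xi$ in $\Z_{p}^{\times}$, which is what makes $k\mapsto\binom{d}{k}f_{B}^{(p)}(x(a))$ interpolate $p$-adically.
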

\begin{proof} 
This follows from a similar argument as in the proof of \cite[Prop. 8.1]{Br1}.\\
\end{proof} 
\noindent\\
Let $L_{p}(f,\chi)$ be the $p$-adic measure on $\Gamma_{\mathfrak{N}^{+}}$ such that for $\varphi \in \mathcal{C}(\Gamma,\overline{\Z}_{p})$, we have 
\beq
\int_{\Gamma_{N^{+}}} \varphi dL_{p}(f,\chi) = \sum_{a \in \mathcal{D}_{1}} \chi(a)\int_{\Gamma} \varphi |[a] df_{B}^{(p)}(x(a)).
\eeq
The operator $|[a] \in \End(\mathcal{C}(\Gamma,\overline{\Z}_{p}))$ is given by 
$\varphi \mapsto \varphi|[a](\sg)=\varphi(\sg\rec_{K}(a)|_{\Gamma})$.\\
\\
To state the interpolation property of the measure, we need further notation.\\
\\
Let 
\beq
\alpha(f,f_{B})=\frac{\langle f, f \rangle}{\langle f_{B}, f_{B} \rangle}. 
\eeq
Here $\langle \cdot , \cdot \rangle$ denotes the normalised Petersson inner product in \cite[\S1]{Pr}.\\
\\
For an unramified Hecke character $\lam \in \Sg_{cc}^{(2)}$ with infinity type $(k+j, -j)$ for $j \geq 0$, let
\beq
C(f,\lam)=\frac{1}{4}\pi^{k+2j-1}\cdot \Gamma(j+1)\Gamma(k+j) \cdot w_{K} \sqrt{|d_{K}|}2^{|S_{f}|}\cdot \prod_{l|N^{-}}\frac{l-1}{l+1}.
\eeq
Here $S_{f}$ denotes the set of primes which ramify in $K$ that divide $N^{+}$ but do not divide the conductor of $\epsilon$.\\
\\
Let $\mathfrak{b}\subset \cO_{K}$ be an ideal, $b_{N}\in \cO_{K}$ and $W_{f} \in \C^\times$ as in \cite[Prop. 8.3]{Br} and following Corollary 8.4 of \cite{Br}, respectively.\\
\\
Let
\beq
W(f,\lam)=W_{f} \cdot \epsilon_{f}({\bf{N}})(\mathfrak{b})^{-1}\lam {\bf{N}}^{-j}(\mathfrak{b}) \cdot (-N)^{k/2+j}b_{N}^{-k-2j}. 
\eeq 
Let $(\Omega, \Omega_{p}) \in \C^\times \times \C_{p}^\times$ be the complex and $p$-adic CM periods in the beginning of \cite[\S8.4]{Br}.\\
\\
We have the following result regarding the $p$-adic variation of central critical Rankin-Selberg L-values over the $\Z_{p}$-anticyclotomic extension of $K$.\\
\begin{thm} (Brooks) 
Let the notation be as above. 
Let $f \in S_{k}(\Gamma_{0}(N),\epsilon)$ be an elliptic newform and $\chi \in \Sg_{cc}^{(2)}$ an unramified Hecke character over $K$ with infinity type $(k,0)$. 
For an unramified Hecke character $\nu \in \mathfrak{X}$ with infinity type $(m,-m)$, we have 
$$    \frac{\widehat{\nu}(L_{p}(f,\chi))}{\Omega_{p}^{2(k+2(a+m))}} = 
(1-(\chi\nu)^{-1}(\overline{\mathfrak{p}})a_{p}+ (\chi\nu)^{-2}(\overline{\mathfrak{p}})\epsilon(p)p^{k-1}) \cdot t_{K} \cdot\frac{C(f,\chi\nu)}{\alpha(f,f_{B}) W(f,\chi\nu)} \cdot \frac{L(f,\chi\nu,0)}{\Omega^{2(k+2m)}}.            $$
Here 
$$  t_{K}=\frac{|\mathcal{U}^{alg}|}{[\cO^{\times}:\Z^{\times}]|Cl_{-}^{alg}|}                     $$
\end{thm}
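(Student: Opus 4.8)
The statement is Brooks's interpolation formula for the anticyclotomic Rankin--Selberg measure (\cite[\S8]{Br}), reorganised so that its indexing set and its archimedean and ramified factors match the conventions of \cite[\S1.3.7]{Hi5} and \cite[\S8]{Br1}. The plan is to transcribe Brooks's computation, keeping careful track of the three places where the normalisations used here differ from those of \cite{Br}: the normalisation $\mu(f_B)=0$ of the Jacquet--Langlands transfer, which enters only through the Petersson ratio $\alpha(f,f_B)=\langle f,f\rangle/\langle f_B,f_B\rangle$; the presentation of the measure through the representatives $\cD_1$ of $Cl_-/Cl_-^{alg}$ and the twisting operators $\varphi\mapsto\varphi|[a]$; and the comparison of the complex and $p$-adic CM periods $(\Omega,\Omega_p)$.

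First I would unwind the defining identity of $L_p(f,\chi)$. Evaluating it at the $p$-adic avatar $\widehat\nu$ of an unramified anticyclotomic character $\nu$ of infinity type $(m,-m)$, and using the interpolation property of the measures $\cF_B^{(p)}(x(a))$ together with the lemma identifying their power series expansions with the $t$-expansions of $f_B^{(p)}$ around the CM points $x(a)$, turns $\widehat\nu(L_p(f,\chi))$ into the finite sum $\sum_{a\in\cD_1}(\chi\nu)(a)\,(d^{m}f_B^{(p)})(x(a))$ of values of iterated Katz differential operators. Reassembling this partial sum over $\cD_1$ into the full toric integral over $Cl_-$, which is the shape in which Waldspurger's formula presents the central value, is a bookkeeping step with the double cosets $Cl_-$ and $Cl_-^{alg}$ and with the global units and those fixed under $1-c$; it is exactly this step that produces the rational factor $t_K=|\cU^{alg}|/([\cO^\times:\Z^\times]\,|Cl_-^{alg}|)$.

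Next I would insert the Euler factor at $p$: by \cite[Prop.~8.9]{Br} (the toric-period analogue of the $p$-depletion identity used in the proof of Proposition 5.3) the period of $f_B^{(p)}$ against $\chi\nu$ equals $\bigl(1-(\chi\nu)^{-1}(\overline{\mathfrak{p}})a_p+(\chi\nu)^{-2}(\overline{\mathfrak{p}})\epsilon(p)p^{k-1}\bigr)$ times the period of $f_B$ itself, which accounts for the first factor in the displayed formula. Then, and this is the technical heart, I would invoke Brooks's explicit Waldspurger formula on $Sh_B$ (\cite[\S8.2--8.4]{Br}): the square of the algebraic, CM-period-normalised toric period of $f_B$ against $\chi\nu$ equals the central Rankin--Selberg value $L(f,\chi\nu,0)$ times the archimedean constant $C(f,\chi\nu)$, the product $W(f,\chi\nu)$ of ramified local factors, and the Petersson ratio $\alpha(f,f_B)$. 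Matching the explicit $\Gamma$-factors and the factor $\prod_{l\mid N^-}(l-1)/(l+1)$ against the local zeta integrals at the archimedean and the ramified places is the main obstacle; for the quantities $\mathfrak b$, $b_N$ and $W_f$ entering $W(f,\chi\nu)$ I would quote \cite[Prop.~8.3, Cor.~8.4]{Br} rather than redo those local computations. Finally I would run the comparison of the complex and $p$-adic CM periods from the beginning of \cite[\S8.4]{Br}, which turns the algebraic period-side quantity into $\widehat\nu(L_p(f,\chi))/\Omega_p^{2(k+2(a+m))}$ and accounts for the matching power $\Omega^{2(k+2m)}$ on the analytic side; assembling these four inputs yields the asserted identity.
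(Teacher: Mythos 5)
Your proposal is correct and follows exactly the route the paper takes, which is essentially a citation to \cite[Prop.~8.9]{Br} together with the observation that the extra factor $t_K$ arises from the repackaging of the toric integral over the representatives $\cD_1$ of $Cl_-/Cl_-^{alg}$ in the definition (5.8). The paper's proof is two sentences long and offers no details; you have simply unpacked what the citation entails: the reduction of $\widehat\nu(L_p(f,\chi))$ to the sum $\sum_{a\in\cD_1}(\chi\nu)(a)\,d^m f_B^{(p)}(x(a))$ via Lemma 5.5, the Euler factor at $p$ coming from the $p$-depletion identity in \cite[Prop.~8.9]{Br}, Brooks's explicit Waldspurger formula supplying $C(f,\chi\nu)$, $W(f,\chi\nu)$ and $\alpha(f,f_B)$, and the period comparison producing the exponents on $\Omega$ and $\Omega_p$. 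This is the same argument, just made explicit.

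One small cautionary remark on wording: you describe the Waldspurger input as giving the \emph{square} of the toric period. In the display of Theorem 5.6 the left side $\widehat\nu(L_p(f,\chi))$ carries no visible square, while the $p$-adic Waldspurger formula of Theorem 6.1 does. This is consistent because the measure $L_p(f,\chi)$ is normalised so that its interpolated values already produce the central $L$-value rather than its square root; when you "invoke Brooks's explicit Waldspurger formula," you should be clear that the matching between $\widehat\nu(L_p(f,\chi))$ and $L(f,\chi\nu,0)$ is a direct identity of the two sides of Waldspurger (toric-period squared on one side, $L$-value on the other), not an extra squaring performed on the value of the measure. This is a phrasing issue, not a gap, since the identity you ultimately assert matches the statement of Theorem 5.6.
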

\begin{proof} 
This is essentially proven in \cite[Prop. 8.9]{Br} based on the Waldspurger formula on the Shimura curve. 
The extra factor $t_{K}$ arises from the definition (5.8).\\
\end{proof}
\noindent
\begin{remark} 
(1). Note that $t_{K}$ equals a power of $2$.\\
\\
(2). We have an analogous construction of the $p$-adic L-function for unramified Hecke characters $\chi$ over $K$ with infinity type $(k+a,-a)$ 
for $a \geq 0$.\\
 \end{remark}
\noindent\\
\\
\subsection{$\mu$-invariant} 
In this subsection, we prove the vanishing of the $\mu$-invariant of a class of antiyclcotomic Rankin-Selberg $p$-adic L-function.\\
\\
Let the notation and hypotheses be as in \S5.2. 
Let $\Gamma'$ be the open subgroup of $\Gamma$ generated by the image of $1+p\Z_p$ via $\rec_K$. 
Let $\pi_{-}: (\A_{K,f}^{(pN)})^\times \rightarrow \Gamma$ be the map arising from the reciprocity law. 
Let $Z':=\pi_{-}^{-1}(\Gamma')$ be the subgroup of $(\A_{K,f}^{(pN)})^\times$
and let $Cl'_-\supset Cl^{alg}_-$ be the image of $Z'$ in $Cl_-$ and let $\cD'_1$ (resp. $\cD_1''$) be a set of representatives of $Cl'_-/Cl^{alg}_-$ (resp. $Cl_-/Cl'_-$) in 
$(\A_{K,f}^{(pN)})^\x$. Let $\cD_1:=\cD_1''\cD'_1$ be a set of representatives of $Cl_-/Cl^{alg}_-$. 
Let $\cD_0$ be a set of representatives of  $\cU_p/\cU^{alg}$ in $\cU_p$.\\
\\
We have the following theorem regarding the $\mu$-invariant of the anticyclotomic Rankin-Selberg $p$-adic L-functions.\\
\begin{thm} 
Let $f \in S_{k}(\Gamma_{0}(N),\epsilon)$ be an elliptic newform and $\chi \in \Sg_{cc}^{(2)}$ an unramified Hecke character over $K$ with infinity type $(k,0)$. 
Suppose that the hypotheses (ord), (H1), (H2), (H3) and (irr) hold. Then, we have 
$$  \mu(L_{p}(f,\chi))=0.       $$
\end{thm}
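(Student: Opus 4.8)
Following the Introduction, the plan is to run Hida's strategy, assembling the results of \S3--\S4 with the lemmas of \S5.1--\S5.2 already in hand. The claim is equivalent to $\mu(G_{f,\chi})=0$, where $G_{f,\chi}=\int_{1+p\Z_{p}}(1+t)^{y}\,dL_{p}(f,\chi)(y)\in\Zbarp\powerseries{t}$ is the power-series expansion of $L_{p}(f,\chi)$ regarded as a $p$-adic measure on $\Z_{p}$ with support in $1+p\Z_{p}$. So the first task is to pass from this analytically defined measure to a statement about $t$-expansions of modular forms. Unwinding the defining formula for $L_{p}(f,\chi)$ in \S5.2, using Lemma 5.5 (the restriction of $\cF_{B}^{(p)}(x(a))$ to $1+p\Z_{p}$ is the $t$-expansion of $f_{B}^{(p)}$ around $x(a)$) and the branch decomposition $f_{B}^{(p)}=\sum_{u\in\cU_{p}}f_{B,u}$ of Lemma 5.4, one writes $G_{f,\chi}$ as a finite $\Zbarp$-linear combination of the $|[a]$-twists of the $t$-expansions of the forms $f_{B,u}$ around the CM points $x(a)$, $a\in\cD_{1}$.

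Next I would transport everything to the deformation space of the base point $x=x(\cO)$ attached to the trivial ideal class. Each $x(a)$ is a Galois conjugate of $x$, and by Shimura's reciprocity in the infinitesimal form of Lemma 3.5 the combined effect of this conjugation and of the operator $|[a]$ on a $t$-expansion is precomposition with an automorphism of $\Spf(\widehat\cO_{V,x})$ lying in $H_{x}(\Z_{p})$. Grouping the sum over $\cD_{1}=\cD_{1}''\cD_{1}'$: for each $a''\in\cD_{1}''$ the partial sum over $\cD_{1}'$ and over $\cU_{p}$ (via $\cD_{0}$), together with the Hecke operators $|\phi_{u}$ of \S3.3 and the $\chi$-twist, assembles into a single term $c_{a''}\cdot\big(a_{a''}\circ f_{a''}(t)\big)$, where $f_{a''}$ is a modular form on $Sh_{B}$ closely related to $f_{B}^{(p)}$ (regarded modulo $p$ as a form on the irreducible component $V$), $a_{a''}\in H_{x}(\Z_{p})$ is the reciprocity image of $a''$, and $c_{a''}$ is a value of $\chi$ on an idele prime to $p$, hence (after dividing through by a common power of $p$, which does not alter the $\mu$-invariant) a $p$-adic unit. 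Because $\cD_{1}''$ represents $Cl_{-}/Cl_{-}'$ with $Cl_{-}'$ the image of $\rec_{K}^{-1}(\Gamma')$ and $\Gamma'$ the image of $1+p\Z_{p}$, the elements $a_{a''}$ are mutually irrational: $a_{a''}a_{b''}^{-1}\notin H_{x}(\Z_{(p)})$ for $a''\neq b''$.

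Once the identity $G_{f,\chi}=\sum_{a''}c_{a''}(a_{a''}\circ f_{a''}(t))$ is in place, the conclusion follows from \S4. The forms $f_{a''}$ reduce modulo $p$ to non-constant mod $p$ modular forms on $V$: this follows from the non-constancy of $f_{B}$ and of $f_{B}^{(p)}$ modulo $p$ (Lemma 5.1 and Proposition 5.3, which use (irr)), since the Hecke operators entering the construction do not destroy non-constancy. Theorem 4.12, applied with the mutually irrational $a_{a''}\in H_{x}(\Z_{p})$, then shows that the $(a_{a''}\circ\bar f_{a''})$ are linearly (indeed algebraically) independent in $\Spf(\widehat\cO_{V,x})$. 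Since each $a_{a''}$ acts on $\widehat\cO_{V,x}$ as a continuous automorphism preserving the integral structure, $\mu(a_{a''}\circ f_{a''}(t))=\mu(f_{a''}(t))$; and the independence forbids any cancellation modulo $p$, for otherwise dividing $G_{f,\chi}$ by the relevant power of $p$ and reducing would produce a nontrivial $\F$-linear relation among the $a_{a''}\circ\bar f_{a''}$. Hence $\mu(G_{f,\chi})=\min_{a''}\mu(f_{a''}(t))$. By the $t$-expansion principle together with the irreducibility (Ir), $\mu(f_{a''}(t))=\mu(f_{a''})$, and since each $f_{a''}$ differs from $f_{B}^{(p)}$ only by an operation preserving the $\mu$-invariant, Proposition 5.3 gives $\min_{a''}\mu(f_{a''})=0$. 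Therefore $\mu(L_{p}(f,\chi))=\mu(G_{f,\chi})=0$.

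The main obstacle is the content of the first two paragraphs: converting the measure-theoretic definition of $L_{p}(f,\chi)$ into the geometric identity $G_{f,\chi}=\sum c_{a''}(a_{a''}\circ f_{a''}(t))$ with provably mutually irrational $a_{a''}$. This requires a careful interplay between the Galois action on CM points, the operators $|[a]$ and $|\phi_{u}$, and the $H_{x}$-action on the Serre-Tate deformation space (Lemma 3.5), and in particular verifying that the representatives $\cD_{1}''$ genuinely separate cosets of $H_{x}(\Z_{(p)})$ -- which is precisely where the structure of $Cl_{-}$ introduced in \S5.3 is used. By contrast, the two inputs that might look hardest -- the linear independence Theorem 4.12 (whose proof via Chai-Oort rigidity occupies \S4) and the $p$-integrality Proposition 5.3 (which rests on the criterion of \cite{Pr} and the normalization $\mu(f_{B})=0$) -- are already available, and Proposition 5.3 is exactly what upgrades $\min_{a''}\mu(f_{a''})\geq 0$ to $\min_{a''}\mu(f_{a''})=0$.
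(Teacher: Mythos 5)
Your outline captures the right ingredients (Lemmas 5.2, 5.4, 5.5, Proposition 5.3, Theorem 4.12, the interplay of $|[a]$ and $|\phi_u$ with the Serre-Tate reciprocity law), but the central decomposition is not correct, and it misidentifies the family to which the linear independence theorem is applied.

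You claim that for each $a''\in\cD_1''$ the partial sum over $\cD_0\times\cD_1'$ ``assembles into a single term $c_{a''}\cdot(a_{a''}\circ f_{a''}(t))$'' with $f_{a''}$ a mod $p$ modular form on $V$. This collapse cannot be carried out. After transporting everything to the deformation space at $x(1)$, the inner sum for the coset piece indexed by $b$ is
$$
\cF^{b}(t)=|\cU^{alg}|\cdot\sum_{(u,a)\in \cD_{0}\times b\cD_{1}'}\chi(ab^{-1})\,f_{B,u}|[a]\big(t^{\langle ab^{-1}\rangle u^{-1}}\big),
$$
and the exponents $\langle ab^{-1}\rangle u^{-1}$ genuinely vary with $(u,a)$ (the $u^{-1}$ ranges over $\cU_p$ mod $\cU^{alg}$, and $\langle ab^{-1}\rangle$ over a subset of $1+p\Z_p$). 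A single precomposition $a_{a''}\circ$ with $a_{a''}\in H_x(\Z_p)$ produces a single Serre-Tate twist $t\mapsto t^{a_{a''}^{1-c}}$; it cannot absorb a sum of terms carrying pairwise distinct twists. Even the auxiliary series $\widetilde{\cF}(t)=\sum_{u}f_{B,u}(t^{u^{-1}})$ is already not the $t$-expansion of any mod $p$ modular form (note it differs from $f_B^{(p)}(t)=\sum_u f_{B,u}(t)$ precisely because of the $u^{-1}$-twists). So the identity $G_{f,\chi}=\sum_{a''}c_{a''}(a_{a''}\circ f_{a''}(t))$ does not hold with $f_{a''}$ in the class of objects to which Theorem 4.12 applies.

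Consequently the independence theorem is invoked on the wrong index set. In the paper's proof one first uses the disjointness of the coset restrictions to get $\mu(L_p(f,\chi))=\min_{b\in\cD_1''}\mu(L_p^b(f,\chi))$ (this is a formal fact about measures on a disjoint union, not an application of linear independence), and then, for each fixed $b$, applies Theorem 4.1 together with [Hs1, Lemma 5.3] to the full family indexed by $\cD_0\times b\cD_1'$ — the modular forms being $f_{B,u}|[a]$ and the ``irrational'' elements being read off from the exponents $\langle ab^{-1}\rangle u^{-1}$. That is the step on which the whole argument turns, and it is exactly the step your reduction bypasses by collapsing to $|\cD_1''|$ summands. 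Your assertion that the $a_{a''}$ ``are mutually irrational'' because $\cD_1''$ represents $Cl_-/Cl_-'$ is beside the point: mutual irrationality has to be checked for the twists $\langle ab^{-1}\rangle u^{-1}$ within a fixed coset, not across $\cD_1''$. Finally, the claim that the theorem ``is equivalent to $\mu(G_{f,\chi})=0$'' is stronger than needed and only becomes visibly true after the computation shows each $\mu(L_p^b)$ equals $\min_u\mu(f_{B,u})$ independently of $b$; only the implication $\mu(G_{f,\chi})=0\Rightarrow\mu(L_p(f,\chi))=0$ is available at the outset.

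Once the correct structure $\cF^b(t)=|\cU^{alg}|\sum_{(u,a)}\chi(ab^{-1})f_{B,u}|[a](t^{\langle ab^{-1}\rangle u^{-1}})$ is in hand (Lemmas 5.4 and 5.5, Proposition 3.6, and the moment identity $d^q\cF^b|_{t=1}=\int_{\Gamma'}\nu_q\,dL_p^b$), the rest of your argument — $p$-unitness of the constants, preservation of $\mu$ by $|[a]$ and by precomposition with an automorphism, non-constancy modulo $p$ of the $f_{B,u}$ from (irr), and Proposition 5.3 to force $\min_u\mu(f_{B,u})=0$ — does go through, so the endgame is right; it is the passage from the measure to the decomposition that needs to follow the paper's coset-by-coset route.
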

\begin{proof}
Let $t$ denote the Serre-Tate co-ordinate of the deformation space of the CM point $x(1)$ corresponding to the trivial ideal class. 
For $a\in \cD_1'$, let $\langle a \rangle_{\Sigma}$ be the unique element in $1+p\Z_p$ such that 
$\rec_{\Sg_p}(\langle a \rangle_{\Sigma})=\pi_-(\rec_\cK(a))\in\Gamma'$.\\
\\
For $(a,b) \in \cD_{1} \times \cD_{1}^{''}$, let
\beq
\widetilde{\cF}(t)=\sum_{u \in \cU_{p}} f_{B,u}(t^{u^{-1}})
\eeq
and
\beq
\cF^{b}(t)=\sum_{a \in b\cD_{1}^{'}} \chi (ab^{-1})\widetilde{\cF}|[a](t^{\langle ab^{-1}\rangle}).
\eeq
Here $|[a]$ is the Hecke action induced by $a$.\\
\\
Let $L_{p}^{b}(f,\chi)$ be the restriction of the measure $L_{p}(f,\chi)$ to $\pi_{-}(b)\Gamma^{'}$. By definition, we have
\beq
\mu(L_{p}(f,\chi))=\min_{b \in \cD_{1}^{''}} \mu(L_{p}^{b}(f,\chi)). 
\eeq
In view of the Lemma 5.5, we have
\beq
d^{q}\cF^{b}\big{|}_{t=1}=\int_{\Gamma'} \nu_{q}dL_{p}^{b}(f,\chi).
\eeq
Here $q$ is a non-negative integer and $\nu_{q}$ the $p$-adic character of $\Gamma'$ such that 
$\nu_{q}(\rec_{K}(y))=y^{q}$ for $y \in 1+p\Z_{p}$.\\
\\
It follows that the formal $t$-expansion $\cF^{b}(t)$ equals the power series expansion of the measure 
$L_{p}^{b}(f,\chi)$ regarded as a $p$-adic measure on $\Z_p$ with support in $1+p\Z_p$ (\cf (1.3)).\\
\\
Note that 
\beq 
\widetilde{\cF}(t)=|\cU^{alg}| \cdot \sum_{u \in \cU_{p}/\cU^{alg}} f_{B,u}(t^{u^{-1}}). 
\eeq
Thus, we have
\beq
\cF^{b}(t)=|\cU^{alg}| \cdot\sum_{(u,a)\in \cD_{0} \times b\cD_{1}^{'}}\chi (ab^{-1})f_{B,u}|[a](t^{\langle ab^{-1}\rangle u^{-1}}).
\eeq
Note that $p \ndivide |\cU^{alg}|$. 
From \cite[Lemma 5.3]{Hs1}, the linear independence of mod $p$ modular forms (cf. Theorem 4.1) 
and the $t$-expansion principle of \padic modular forms, it follows that
\beq
\mu(L_{p}^{b}(f,\chi))=\min_{u \in \cD_{0}}\mu(f_{B, u}).
\eeq
In view of Proposition 5.3 and Lemma 5.4, this finishes the proof.\\
\end{proof}
\noindent
\begin{remark} 
(1). The hypothesis that the Hecke character $\chi$ is unramified is present in \cite{Br}. 
It mainly arises as Prasanna's explicit version of the Waldspurger formula in \cite{Pr} is conditional on the hypothesis. 
It seems likely that the hypothesis can be removed from the above theorem once we have an explicit version of the 
Waldspurger formula in the ramified case. 
Under mild hypotheses, such a Waldspurger formula is perhaps available (\cf \cite{YZZ}).\\
\\
(2). The theorem has a similar flavour as the results on the vanishing of the $\mu$-invariant in \cite{V1} and \cite{Hs3}. 
In these articles, the hypothesis (irr) is essential for the vanishing. 
For a discussion of the necessity, we refer to the introduction of these articles.\\
\\
(3). A closely related $p$-adic L-function is constructed in \cite{LZZ}. Our strategy 
applies to this $p$-adic L-function as well and 
we can deduce the vanishing of its $\mu$-invariant under the above hypotheses.\\
\\
(4). The theorem can be used as an input in the proof of Perrin-Riou's conjecture on Heegner points under mild hypotheses in \cite{W}. 
The result on the $\mu$-invariant in \cite{Hs3} is originally used in \cite{W}.\\
\end{remark}
\noindent\\
\\
\section{Non-triviality of $p$-adic Abel-Jacobi image modulo $p$} 
\noindent In this section, we consider the non-triviality of the $p$-adic Abel-Jacobi image of generalised Heegner cycles modulo $p$. 
In \S6.1, we describe the $p$-adic Waldspurger formula due to Brooks. In \S6.2, we prove the non-triviality.\\
\\
\subsection{$p$-adic Waldspurger formula} 
In this subsection, we describe the $p$-adic Waldspurger formula due to Brooks relating certain values of the anticyclotomic Rankin-Selberg $p$-adic L-function outside the range of interpolation 
to the $p$-adic Abel-Jacobi image of generalised Heegner cycles.\\
\\
Unless otherwise stated, let the notation and hypotheses be as in the introduction. 
Let $f$ be a normalised elliptic newform of even weight $k\geq 2$, level $\Gamma_{0}(N)$ and neben-character $\epsilon$. 
We also denote the Jacquet-Langlands transfer as in \S5.1 by the same notation. 
Let $\omega_{f}$ be the corresponding differential.\\
\\ 
We first recall that the construction of generalised Heegner cycles in \cite{Br} requires the weight being even (\cf \cite[\S6]{Br}).\\
\\
Let 
\beq
r=\frac{k-2}{2}.
\eeq
\noindent
Let $A$ be the CM abelian surface corresponding to the trivial ideal class in $\Pic(\cO)$ defined over the Hilbert class field $H$ of $K$ (\cf \S2.4). 
Let $\cA_r$ be the Kuga-Sato variety given by $r$-fold fiber product of the universal abelian surface.
For an extension 
$F/K$ containing the real quadratic field $M_{0}$ (\cf \S2.1), let $W_r$ be the variety over $F$ given by $W_{r}=\cA_{r} \times A^{r}$. 
By the abuse of notation, let $\epsilon$ also denote the idempotent in the ring of correspondences on $W_r$ defined in \cite[\S6.1]{Br}.\\
\\
For an integer $j$ such that $0 \leq j \leq 2r$, let $\omega_{f} \wedge \omega_{A}^{j}\eta_{A}^{2r-j} \in Fil^{2r+1}\epsilon H^{4r+1}_{dR}(W_r/F)$ be as in \cite[\S6.4]{Br}. 
Let $n$ be a positive integer. 
For an ideal $\mathfrak{a} \subset \cO_{N^{+}p^{n}}$, 
let $\Delta_{\mathfrak{a}} \in \epsilon CH^{2r+1}(W_{r} \otimes L)_{0,\Q}$ be the codimension-$(2r+1)$ homologous to zero generalised Heegner cycle defined in \cite[\S6.2]{Br}. 
Here $L$ is the field of definition of the cycle and $CH^{2r+1}(W_{r} \otimes L)_{0,\Q}$ is the Chow group of codimension-$(2r+1)$ homologous to zero cycles over $L$ 
with rational coefficients.\\
\\
Let
\beq
AJ_{p}: \epsilon CH^{2r+1}(W_{r})_{0,\Q} \rightarrow (Fil^{2r+1}\epsilon H^{4r+1}_{dR}(W_r/F)(r))^{\vee}
\eeq
be the $p$-adic Abel-Jacobi map in \cite[\S6.3]{Br}.\\ 
\\
We have the following $p$-adic Waldspurger formula.\\
\begin{thm}(Brooks) 
Let the notation be as above. 
Let $f \in S_{k}(\Gamma_{0}(N), \epsilon)$ be an elliptic newform 
and $\chi \in \Sg_{cc}^{(2)}$ an unramified Hecke character over $K$ with infinity type $(k,0)$. 
Let $\eta$ be a Hecke character such that $\chi\eta \in \Sg_{cc}^{(1)}$ 
is an unramified Hecke character over $K$ with infinity type $(k-1-j,1+j)$ for $0 \leq j \leq 2r$.
Let $\nu \in \mathfrak{X}_{0}$ be a primitive Hecke character of conductor $p^{n}$, where $n\geq 1$. 
Suppose that the hypotheses (ord), (H1), (H2) and (H3) hold. 
Then, we have\\
$$\frac{\widehat{\eta\nu}(L_{p}(f,\chi))}{\Omega_{p}^{2(2r-2j)}} =
\bigg{(} \frac{G(\nu^{-1})}{j!} \cdot\sum_{[\mathfrak{a}]\in G_{p^{n}}} (\chi\eta\nu)^{-1}(\mathfrak{a}){\bf{N_{K}}}(\mathfrak{a})\cdot AJ_{p}(\Delta_{\mathfrak{a}})(\omega_{f} \wedge \omega_{A}^{j}\eta_{A}^{r-j})\bigg{)}^{2}.$$
\noindent\\
\end{thm}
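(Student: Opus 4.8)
The plan is to derive the formula from a geometric interpretation of the $p$-adic Abel--Jacobi pairing in terms of $p$-adic modular forms on $Sh_{B}$, following Brooks's adaptation of the Bertolini--Darmon--Prasanna method (\cf \cite{Br}); below I outline how I would organise that argument.

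\emph{Step 1: rewrite the L-value as a twisted sum of values of an iterated theta operator.} Unwinding the definition (5.6) of $L_{p}(f,\chi)$ and invoking Lemma 5.5, the power series expansion of the measure $\mathcal{F}_{B}^{(p)}(x(\mathfrak{a}))$ on $1+p\Z_{p}$ is the $t$-expansion $f_{B}^{(p)}(t)=\sum_{p\nmid\xi}a(\xi,f_{B})\,t^{\xi}$ around the CM point $x(\mathfrak{a})$. The character $\eta\nu$ has infinity type $(-1-j,1+j)$, since $\chi\eta\nu\in\Sg_{cc}^{(1)}$ has infinity type $(k-1-j,1+j)$ while $\chi$ has infinity type $(k,0)$; hence its $p$-adic avatar restricted to $1+p\Z_{p}$ is $y\mapsto y^{-1-j}$, and integrating the measure against it amounts to applying the $(-1-j)$-st power of the Atkin--Serre operator $d=t\,d/dt$. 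The essential point is that $d^{-1-j}$ is well defined on the $p$-depleted form $f_{B}^{(p)}$, every exponent $\xi$ occurring being a $p$-adic unit (Lemma 5.2). One thus obtains, up to elementary constants,
$$\widehat{\eta\nu}(L_{p}(f,\chi))\ \doteq\ G(\nu^{-1})\cdot\sum_{a\in\mathcal{D}_{1}}\chi(a)\,\big(d^{-1-j}f_{B}^{(p)}\big)\big|[a]\,(x(a)),$$
the Gauss sum $G(\nu^{-1})$ being forced in because $\nu$ is ramified of conductor $p^{n}$ with $n\geq1$, exactly as the normalised Fourier transform $\phi^{*}$ enters Proposition 3.6.

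\emph{Step 2: identify $d^{-1-j}f_{B}^{(p)}$ at CM points with $p$-adic Abel--Jacobi images --- the technical heart.} Here I would invoke the de Rham realisation of the generalised Heegner cycles from \cite[\S6]{Br}: the class of $\Delta_{\mathfrak{a}}$ in $\epsilon H^{4r+1}_{dR}(W_{r}/F)$ is computed by Coleman's iterated $p$-adic integral, and pairing it against $\omega_{f}\wedge\omega_{A}^{j}\eta_{A}^{2r-j}$ one finds that, on the Serre--Tate side, this integral equals $\frac{1}{j!}$ times the value at $x(\mathfrak{a})$ of $d^{-1-j}$ applied to $f_{B}$, up to a fixed power of the $p$-adic CM period $\Omega_{p}$. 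Making this precise requires matching the relevant graded pieces of $\epsilon H^{*}_{dR}(W_{r}/F)$ with the module of $p$-adic modular forms of \S2.6, identifying the Gauss--Manin connection with the operator $d$, and using the unit-root splitting so that the negative power of $d$ produced by Coleman integration is the one appearing in Step 1. I expect this comparison, together with the attendant period bookkeeping, to be the main obstacle; it is precisely here that the splitting of $p$ in $K$ (rendering $A$ ordinary, so that the Serre--Tate theory of \S3 is available at the CM point) is genuinely used.

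\emph{Step 3: reorganise over the ring class group and extract the square.} Converting $\sum_{a\in\mathcal{D}_{1}}$ into a sum over $G_{p^{n}}$, absorbing the Hecke operators $|[a]$ into the Galois action on CM points and matching $\chi(a)$ with the twist $(\chi\eta\nu)^{-1}\mathbf{N}_{K}$, Steps 1--2 turn the right-hand side into $\sum_{[\mathfrak{a}]\in G_{p^{n}}}(\chi\eta\nu)^{-1}(\mathfrak{a})\,\mathbf{N}_{K}(\mathfrak{a})\cdot AJ_{p}(\Delta_{\mathfrak{a}})(\omega_{f}\wedge\omega_{A}^{j}\eta_{A}^{r-j})$. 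Finally the square: the left-hand side is a value of a Rankin--Selberg $p$-adic L-function, while after Step 2 the Abel--Jacobi sum is a $p$-adic toric period; the identity is then the $p$-adic avatar of Waldspurger's formula $L(f,\chi\eta\nu,0)\doteq|\text{toric period}|^{2}$, whose Gross--Zagier-type geometric version on $Sh_{B}$ produces the square, with the elementary factor $G(\nu^{-1})/j!$ left outside. The last step is to track the complex and $p$-adic CM period normalisations so that the residual proportionality constant is exactly $1$; this is where the precise choice of the Jacquet--Langlands transfer $f_{B}$ and of the differential $\omega_{f}$ enters.
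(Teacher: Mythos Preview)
The paper's own proof is just a citation to \cite[Thm.~8.11]{Br}, together with the remark that the Gauss sum appears because $\nu$ has $p$-power conductor. Your proposal instead sketches Brooks' argument itself, which is considerably more than the paper supplies. Steps~1 and~2 are a faithful outline of that argument: unwind the definition (5.8), interpret $\widehat{\eta\nu}(L_p(f,\chi))$ as a twisted sum of values of $d^{-1-j}f_B^{(p)}$ at CM points (the Gauss sum entering exactly as in Proposition~3.6 since $\nu$ is ramified at $p$), and then identify each such value with an Abel--Jacobi pairing via Coleman integration and the unit-root splitting.

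Step~3, however, misidentifies the origin of the square. You invoke ``the $p$-adic avatar of Waldspurger's formula $L(f,\chi\eta\nu,0)\doteq|\text{toric period}|^{2}$'' and a ``Gross--Zagier-type geometric version on $Sh_B$'' at the character $\chi\eta\nu$. But $\chi\eta\nu\in\Sg_{cc}^{(1)}$, where the global root number is $-1$, so $L(f,\chi\eta\nu,0)=0$ and the archimedean Waldspurger identity there is the triviality $0=0$; it cannot produce anything. No archimedean Waldspurger or Gross--Zagier input is used at $\chi\eta\nu$ in Brooks' proof. The square is instead a feature of how $L_p(f,\chi)$ is normalised: it is built to interpolate the central $L$-values on $\Sg_{cc}^{(2)}$ (Theorem~5.6), and those $L$-values are squares of toric periods by the classical Waldspurger formula applied \emph{inside} the range of interpolation. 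That square structure is already encoded in the measure; when one evaluates outside the range, the direct computation of Steps~1--2 recovers the underlying linear period, and the squaring is bookkeeping tied to the definition of $L_p$, not a separate geometric theorem to be invoked at the character $\chi\eta\nu$.
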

\begin{proof}
This follows from the argument in the proof of Theorem 8.11 in \cite{Br}. 
As $\eta$ is of $p$-power conductor, we obtain the extra factor of the Gauss sum. 
For a related argument, we refer to the proof of \cite[Thm. 4.9]{Ca}.\\
\end{proof}
\noindent\\
\\ 
\subsection{Non-triviality} 
In this subsection, we consider the non-triviality of the $p$-adic Abel-Jacobi image of generalised Heegner cycles modulo $p$.\\
\\
Let the notation and hypotheses be as in \S6.1. 
We have the following result regarding the non-triviality.\\
\begin{thm}
Let $f \in S_{k}(\Gamma_{0}(N),\epsilon)$ be an elliptic newform of even weight
and $\chi \in \Sg_{cc}^{(2)}$ an unramified Hecke character over $K$ with infinity type $(k,0)$. 
Let $\eta$ be a Hecke character such that $\chi\eta \in \Sg_{cc}^{(1)}$ 
is an unramified Hecke character over $K$ with infinity type $(k-1-j,1+j)$ for $0 \leq j \leq 2r$.
Suppose that the hypotheses (ord), (H1), (H2), (H3) and (irr) hold.
Then, we have\\
$$\liminf_{\nu \in \mathfrak{X}_{0}} v_{p}   \bigg{(}\frac{G(\nu^{-1})}{j!} \cdot
\sum_{[\mathfrak{a}]\in G_{p^{n}}} (\chi\eta\nu)^{-1}(\mathfrak{a}){\bf{N_{K}}}(\mathfrak{a})\cdot AJ_{p}(\Delta_{\mathfrak{a}})(\omega_{f} \wedge \omega_{A}^{j}\eta_{A}^{r-j}) \bigg{)}=0, $$
where $p^{n}$ is the conductor of $\nu$. 
Moreover, the same conclusion holds when $\mathfrak{X}_{0}$ is replaced by any of its infinite subset.\\
\end{thm}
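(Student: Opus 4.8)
The plan is to derive this theorem from the two substantial results already proved, the $p$-adic Waldspurger formula (Theorem 6.1) and the vanishing $\mu(L_{p}(f,\chi))=0$ (Theorem 5.7); what remains is elementary. Write $S_{\nu}$ for the quantity $\tfrac{G(\nu^{-1})}{j!}\sum_{[\mathfrak{a}]\in G_{p^{n}}}(\chi\eta\nu)^{-1}(\mathfrak{a})\,\mathbf{N}_{K}(\mathfrak{a})\cdot AJ_{p}(\Delta_{\mathfrak{a}})(\omega_{f}\wedge\omega_{A}^{j}\eta_{A}^{r-j})$ whose valuation is to be controlled, $p^{n}$ being the conductor of $\nu$. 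Every non-trivial $\nu\in\mathfrak{X}_{0}$ is primitive of conductor $p^{n}$ with $n\ge1$, so Theorem 6.1 yields $S_{\nu}^{2}=\widehat{\eta\nu}(L_{p}(f,\chi))\,\Omega_{p}^{-2(2r-2j)}$. Recalling that the $p$-adic CM period $\Omega_{p}$ is a $p$-adic unit (Brooks's normalisation, \cite[\S8.4]{Br}), so $v_{p}(\Omega_{p})=0$, we get $v_{p}(S_{\nu})=\tfrac{1}{2}v_{p}\big(\widehat{\eta\nu}(L_{p}(f,\chi))\big)\ge0$. Since the trivial character and any finite set of $\nu$ do not affect a $\liminf$, it remains to prove that $\liminf_{\nu}v_{p}\big(\widehat{\eta\nu}(L_{p}(f,\chi))\big)=0$ as $\nu$ runs over an arbitrary infinite subset of $\mathfrak{X}_{0}$.

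For this I would invoke Theorem 5.7. Fix a topological generator $\gamma$ of $\Gamma$ and use $T=\gamma-1$ to identify $\cO\powerseries{\Gamma}\iso\cO\powerseries{T}$, where $\cO$ is the ring of integers of a finite extension of $\Q_{p}$ containing the coefficients of $L_{p}(f,\chi)$; write $L_{p}(f,\chi)=\sum_{k\ge0}b_{k}T^{k}$. The hypothesis $\mu(L_{p}(f,\chi))=0$ says the mod-$\frakm$ reduction $\sum_{k}\bar b_{k}T^{k}$ is non-zero; let $\lambda\ge0$ be its order of vanishing at $T=0$, so $v_{p}(b_{\lambda})=0$ while $\delta:=\min_{0\le k<\lambda}v_{p}(b_{k})>0$. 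For $\nu\in\mathfrak{X}_{0}$ the value $\widehat{\eta\nu}(L_{p}(f,\chi))$ is $L_{p}(f,\chi)$ evaluated at $T=\pi_{\nu}:=\widehat{\eta\nu}(\gamma)-1$; since $\widehat{\eta\nu}(\gamma)$ is the fixed element $\widehat{\eta}(\gamma)\in1+\frakm$ times a primitive $p^{m}$-th root of unity with $m\to\infty$ as the conductor of $\nu$ grows, one has $v_{p}(\pi_{\nu})>0$ and $v_{p}(\pi_{\nu})\to0$ (because $v_{p}(\zeta_{p^{m}}-1)=1/(p^{m-1}(p-1))$ eventually becomes smaller than the fixed number $v_{p}(\widehat{\eta}(\gamma)-1)$). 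An ultrametric estimate on $\sum_{k}b_{k}\pi_{\nu}^{k}$ then shows that, once $v_{p}(\pi_{\nu})<\delta/(\lambda+1)$, the $k=\lambda$ term strictly dominates and $v_{p}\big(\widehat{\eta\nu}(L_{p}(f,\chi))\big)=\lambda\,v_{p}(\pi_{\nu})$. Any infinite subset of $\mathfrak{X}_{0}$ contains characters of arbitrarily large conductor, so $v_{p}(\pi_{\nu})\to0$ along them, giving the claimed $\liminf$.

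The genuinely difficult content — the linear independence of mod $p$ modular forms (\S4) behind Theorem 5.7, and Brooks's $p$-adic Waldspurger formula — is already in place, so this final theorem is essentially an assembly of the two. I expect the only points needing care to be: (i) confirming $v_{p}(\Omega_{p})=0$, since otherwise the $\liminf$ would be shifted by $(2r-2j)v_{p}(\Omega_{p})$ and the assertion would fail for $j\neq r$; and (ii) the bookkeeping identifying $\widehat{\eta\nu}(L_{p}(f,\chi))$ — an evaluation of the measure $L_{p}(f,\chi)$ on $\Gamma$ at the wild character attached to $\eta\nu$ — with the one-variable specialisation of the associated Iwasawa power series at $T=\pi_{\nu}$, in particular checking that the fixed twist by $\eta$ is tame enough that $\widehat{\eta}(\gamma)\in1+\frakm$ and tracking the subgroup $\Gamma'\subset\Gamma$ of \S5.3. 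Granting these, the rest is the standard fact that a non-zero mod $p$ Iwasawa function has valuation tending to $0$ under specialisation at wild characters of growing conductor.
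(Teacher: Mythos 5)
Your proposal is correct and takes the same route as the paper: the paper's own proof of this theorem is a single line, ``This follows readily from Theorem 5.7 and Theorem 6.1,'' and your argument is precisely the elaboration of that assembly — apply the $p$-adic Waldspurger formula to convert the target quantity into a special value of $L_p(f,\chi)$, observe $v_p(\Omega_p)=0$, and then run the standard Iwasawa-theoretic estimate that a power series with vanishing $\mu$-invariant has specializations at wild characters of growing conductor whose valuations tend to $0$. The two points you flag as needing care (unit normalisation of $\Omega_p$, and the bookkeeping identifying $\widehat{\eta\nu}(L_p(f,\chi))$ with a one-variable specialisation at $T=\widehat{\eta\nu}(\gamma)-1$) are indeed the only places where the paper's one-line proof is silently relying on conventions from \cite{Br} and from \S5.2–\S5.3, so spelling them out is the right instinct; neither turns into a genuine obstruction.
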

\begin{proof}
This follows readily from Theorem 5.7 and Theorem 6.1.\\
\end{proof}
\noindent
\begin{remark} 
(1). It follows that the generalised Heegner cycles are non-trivial in the top graded piece of the coniveau filtration on the Chow group over the $\Z_p$-anticyclotomic extension. 
The non-triviality can be seen as an evidence for the refined Bloch-Beilinson conjecture as follows. 
Recall that the Rankin-Selberg convolution in consideration is self-dual with root number $-1$. 
In view of \cite{Sc} and the Jacquet-Langlands correspondence, the corresponding Galois representation contributes to an étale cohomology of $W_r$. 
The conjecture thus predicts the existence of a non-trivial cycle in the top graded piece of the coniveau filtration on the Chow group. 
Generalised Heegner cycles are a natural source of cycles in the setup and in the case of weight two, they coincide with classical Heegner points. 
We can thus expect a generic non-triviality of generalised Heegner cycles. 
For the details and the role of coniveau filtration, we refer to \cite[\S1 and \S2]{BDP3} and \cite{Bu4}.\\
\\
(2). In view of the theorem and the construction of generalised Heegner cycles, it follows that 
the Griffiths group $Gr^{r+1}(W_{r/\overline{\Q}}) \otimes \Q$ has infinite rank. 
An analogous result for the Griffiths group of the Kuga-Sato variety $\cA_r$ is due to Besser (\cf \cite{Be}). 
The approach in \cite{Be} is via consideration of generic non-triviality of classical Heegner cycles over a class of varying imaginary quadratic extensions.\\
\end{remark}
\noindent\\
We have the following immediate corollary.\\
\begin{cor}
Let $f \in S_{2}(\Gamma_{0}(N), \epsilon)$ be an elliptic newform and 
$\chi$ be an unramified finite order Hecke character over $K$ such that $\chi {\bf{N_{K}}} \in \Sg_{cc}^{(1)}$. 
Suppose that the hypotheses (ord), (H1), (H2), (H3) and (irr) hold. 
Then, we have\\
$$\liminf_{\nu \in \mathfrak{X}_{0}}v_{p}\bigg{(}G(\nu^{-1})\log_{\omega_{B_{f}}}(P_{f}(\chi \nu))\bigg{)}=0.$$
\\
In particular,  
for $\nu \in \mathfrak{X}_{0}$ with sufficiently large $p$-power order the Heegner points $P_{f}(\chi\nu)$ are non-zero in $B_{f}(H_{\chi\nu}) \otimes_{T_f} E_{f,\eta\nu}$. 
Moreover, the same conclusions hold when $\mathfrak{X}_{0}$ is replaced by any of its infinite subset.\\
\end{cor}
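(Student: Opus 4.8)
The plan is to deduce the corollary directly from Theorem 6.2 by specialising to weight $k=2$ and unwinding the definitions. When $k=2$ one has $r=(k-2)/2=0$, so in Theorems 6.1 and 6.2 the only admissible index is $j=0$: the Kuga--Sato factor degenerates, the variety $W_{0}=\cA_{0}\times A^{0}$ reduces to the Shimura curve $Sh_{B}$ (up to level structure), the class $\omega_{f}\wedge\omega_{A}^{j}\eta_{A}^{r-j}$ is just $\omega_{f}$, the factorials and the period power $\Omega_{p}^{2(2r-2j)}$ are trivial, and the codimension-$(2r+1)$ cycles $\Delta_{\mathfrak a}\in\epsilon CH^{1}(W_{0}\otimes L)_{0,\Q}$ are, under the standard identification of codimension-one homologically trivial cycles modulo rational equivalence with the points of the Jacobian, the Heegner points on $J_{B}$ attached to the ideal classes $\mathfrak a\subset\cO_{N^{+}p^{n}}$, normalised as in \cite{Br} and \cite{Z}.

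First I would record the comparison between the codimension-one $p$-adic Abel--Jacobi pairing and the $p$-adic formal group logarithm: for a curve $C/F$ with Jacobian $J$, a regular differential $\omega$, and a class $P\in J(F)$, one has $AJ_{p}(P)(\omega)\doteq\log_{\omega}(P)$, where $\log_{\omega}$ is the $p$-adic (Coleman) logarithm attached to $\omega$; in weight two this is the classical relation between Coleman integration of $\omega_{f_{B}}$ along a Heegner divisor and the formal group logarithm, and it is built into the construction in \cite{Br}. Applying this with $C=Sh_{B}$ and $\omega=\omega_{f_{B}}$, and using $\Phi_{f}^{*}\omega_{B_{f}}=\omega_{f_{B}}$ together with $\Phi_{f}\colon J_{B}\to B_{f}$, one obtains $AJ_{p}(\Delta_{\mathfrak a})(\omega_{f})\doteq\log_{\omega_{B_{f}}}(\Phi_{f}(\Delta_{\mathfrak a}))$, the $\Delta_{\mathfrak a}$ (for varying $[\mathfrak a]\in G_{p^{n}}$) being the Galois conjugates of the Heegner point appearing in the definition of $P_{f}$. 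Next I would observe that, in the notation of Theorem 6.2, the twist $(\chi\eta\nu)^{-1}\,{\bf{N_{K}}}$ has trivial infinity type and coincides with the finite-order character $(\chi\nu)^{-1}$ of $G_{p^{n}}$ in the notation of the corollary; hence the ${\bf{N_{K}}}$-factor disappears and, by $E_{f,\chi\nu}$-linearity of $\log_{\omega_{B_{f}}}$,
\[
\frac{G(\nu^{-1})}{j!}\sum_{[\mathfrak a]\in G_{p^{n}}}(\chi\eta\nu)^{-1}(\mathfrak a)\,{\bf{N_{K}}}(\mathfrak a)\,AJ_{p}(\Delta_{\mathfrak a})(\omega_{f}\wedge\omega_{A}^{j}\eta_{A}^{r-j})\doteq G(\nu^{-1})\log_{\omega_{B_{f}}}\bigl(P_{f}(\chi\nu)\bigr),
\]
with implied $p$-adic unit independent of $\nu$.

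Granting this identity, the assertion $\liminf_{\nu\in\mathfrak{X}_{0}}v_{p}\bigl(G(\nu^{-1})\log_{\omega_{B_{f}}}(P_{f}(\chi\nu))\bigr)=0$, as well as the statement for an arbitrary infinite subset of $\mathfrak{X}_{0}$, is precisely Theorem 6.2 specialised to $k=2$. For the ``In particular'' clause, note that if $P_{f}(\chi\nu)=0$ then $\log_{\omega_{B_{f}}}(P_{f}(\chi\nu))=0$, so the corresponding term contributes $+\infty$ to the $\liminf$; applying the infinite-subset form of the vanishing to the set $\{\nu\in\mathfrak{X}_{0}:P_{f}(\chi\nu)=0\}$ forces this set to be finite, whence $P_{f}(\chi\nu)\neq0$ in $B_{f}(H_{\chi\nu})\otimes_{T_{f}}E_{f,\chi\nu}$ for all but finitely many $\nu$, in particular for $\nu$ of sufficiently large $p$-power order.

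The only step demanding genuine care is the comparison in the second paragraph: identifying the $p$-adic Abel--Jacobi pairing with the formal group logarithm and tracking the periods, Gauss sum and the ${\bf{N_{K}}}$-twist precisely enough that the implied constant is a true $p$-adic unit, so that $p$-adic valuations are unaffected. This is, however, already contained in \cite{Br} and in the standard literature on $p$-adic Waldspurger formulas; once it is invoked, the corollary is a routine translation of Theorem 6.2 into the language of Heegner points.
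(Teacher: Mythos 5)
Your proposal is correct and takes essentially the same route as the paper, which in fact records the entire argument in one line (``This follows from the weight $2$ case of the theorem''). You have simply made explicit the unwinding that the paper leaves implicit: $r=j=0$, $W_{0}\simeq Sh_{B}$, $\omega_{f}\wedge\omega_{A}^{0}\eta_{A}^{0}=\omega_{f}$, the identification of $AJ_{p}$ in codimension one with the $p$-adic logarithm on $J_{B}$, and the transfer to $B_{f}$ via $\Phi_{f}^{*}\omega_{B_{f}}=\omega_{f_{B}}$. Your deduction of the ``in particular'' clause from the infinite-subset form (terms with $P_{f}(\chi\nu)=0$ have valuation $+\infty$, so the set of such $\nu$ cannot be infinite) is exactly the intended reading and is sound. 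The only thing worth flagging is that the paper's statement of the corollary has $E_{f,\eta\nu}$ in the tensor product where it should read $E_{f,\chi\nu}$; you have quietly corrected this, which is appropriate.
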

\begin{proof} 
This follows from the weight 2 case of the theorem.\\
\end{proof}
\noindent\\
\thebibliography{99}
\bibitem{AN} E. Aflalo and J. Nekov\'{a}\v{r}, \emph{Non-triviality of CM points in ring class field towers}, With an appendix by Christophe Cornut. Israel J. Math. 175 (2010), 225–284.
\bibitem{BDP1} M. Bertolini, H. Darmon and K. Prasanna, \emph{Generalised Heegner cycles and $p$-adic Rankin L-series}, Duke Math Journal, Vol. 162, No. 6, 1033-1148.
\bibitem{BDP3} M. Bertolini, H. Darmon and K. Prasanna, \emph{$p$-adic L-functions and the coniveau filtration on Chow groups}, preprint, 2013, 
available at "http://www.math.mcgill.ca/darmon/pub/pub.html".
\bibitem{Be} A. Besser, \emph{CM cycles over Shimura curves}, J. Algebraic Geom. 4 (1995), no. 4, 659–691. 
\bibitem{Br1} M. Brakocevic, \emph{Anticyclotomic $p$-adic L-function of central critical Rankin-Selberg L-value}, IMRN, 
Vol. 2011, No. 21, (2011), 4967-5018. 
\bibitem{Br} E. H. Brooks, \emph{Shimura curves and special values of $p$-adic L-functions}, to appear in IMRN (2014), doi: 10.1093/imrn/rnu062. 
\bibitem{BuHs} A. Burungale and M.-L. Hsieh, \emph{The vanishing of $\mu$-invariant of $p$-adic Hecke L-functions for CM fields}, 
Int. Math. Res. Not. IMRN 2013, no. 5, 1014–1027. 
\bibitem{Bu} A. Burungale, \emph{On the $\mu$-invariant of the cyclotomic derivative of a Katz p-adic L-function.}, 
 J. Inst. Math. Jussieu 14 (2015), no. 1, 131–148.
\bibitem{BuHi} A. Burungale and H. Hida, \emph{$\mathfrak{p}$-rigidity and Iwasawa $\mu$-invariants}, preprint, 2014, 
available at http://www.math.ucla.edu/$\sim$ ashay/ .
\bibitem{Bu1} A. Burungale, \emph{$\mathfrak{p}$-rigidity and $\mathfrak{p}$-independence of quaternionic modular forms modulo $p$}, preprint, 2014.
\bibitem{Bu2} A. Burungale, \emph{On the non-triviality of generalised Heegner cycles modulo $p$, 
 I: modular curves}, preprint, 2014, 
available at http://www.math.ucla.edu/$\sim$ ashay/ .
\bibitem{Bu4} A. Burungale, \emph{Non-triviality of generalised Heegner cycles over anticyclotomic towers: a survey}, 
submitted to the proceedings of the ICTS program `$p$-adic aspects of modular forms', 2014.
\bibitem{Bu3} A. Burungale, \emph{On the non-triviality of the $p$-adic Abel-Jacobi image of generalised Heegner cycles modulo $p$, 
 III}, in progress.
\bibitem{Ca} F. Castella, \emph{Heegner cycles and higher weight specializations of big Heegner points}, Math. Annalen 356 (2013), 1247-1282. 
\bibitem{C} C. Cornut, \emph{Mazur's conjecture on higher Heegner points}, Invent. Math. 148 (2002), no. 3, 495-523. 
\bibitem{CV1} C. Cornut and V. Vatsal, \emph{CM points and quaternion algebras},  Doc. Math. 10 (2005), 263–309. 
\bibitem{CV2} C. Cornut and V. Vatsal, \emph{Nontriviality of Rankin-Selberg L-functions and CM points}, 
L-functions and Galois representations, 121–186, London Math. Soc. Lecture Note Ser., 320, Cambridge Univ. Press, Cambridge, 2007. 
\bibitem{Ca} H. Carayol, \emph{Sur la mauvaise r$\acute{e}$duction des courbes de Shimura},  Compositio Math.  59  (1986),  no. 2, 151-230.
\bibitem{Ch1} C.-L. Chai, \emph{Every ordinary symplectic isogeny class in positive characteristic is dense in the
moduli}, Invent. Math. 121 (1995), 439-479.
\bibitem{Ch2} C.-L. Chai, \emph{Families of ordinary abelian varieties: canonical coordinates, p-adic monodromy,
Tate-linear subvarieties and Hecke orbits}, preprint, 2003.
Available at http://www.math.upenn.edu/$\sim$ chai/papers.html .
\bibitem{Ch3} C.-L. Chai, \emph{Hecke orbits as Shimura varieties in positive characteristic}, International Congress of Mathematicians. Vol. II,  295-312, Eur. Math. Soc., Zurich, 2006.
\bibitem{dJ} A. J. de Jong, \emph{Homomorphisms of Barsotti-Tate groups and crystals in positive characteristic},
Invent. Math. 134 (1998), 301-333.
\bibitem{F} Olivier Fouquet, \emph{Iwasawa theory of nearly ordinary quaternionic automorphic forms}, Compos. Math. 149 (2013), no. 3, 356–416. 
\bibitem{Hi1} H. Hida, \emph{p-Adic Automorphic Forms on Shimura Varieties}, Springer Monogr. in Math.,
Springer-Verlag, New York, 2004. 
\bibitem{Hi2} H. Hida, \emph{Irreducibility of the Igusa tower}, Acta Math. Sin. (Engl. Ser.) 25 (2009), 1-20.
\bibitem{Hi3} H. Hida, \emph{The Iwasawa $\mu$-invariant of p-adic Hecke L-functions}, Ann. of Math. (2) 172 (2010),
41-137.
\bibitem{Hi4} H. Hida, \emph{Local indecomposability of Tate modules of non CM abelian varieties with real multiplication}, 
 J. Amer. Math. Soc. 26 (2013), no. 3, 853–877.
\bibitem{Hi5} H. Hida, \emph{Elliptic Curves and Arithmetic Invariants}, Springer Monogr. in Math., Springer, New York, 2013, xviii+449 pp.
\bibitem{Ho} B. Howard, \emph{Special cohomology classes for modular Galois representations}, 
J. Number Theory 117 (2006), no. 2, 406–438. 
\bibitem{Ho1} B. Howard, \emph{Variation of Heegner points in Hida families}, Invent. Math. 167 (2007), no. 1, 91–128. 
\bibitem{Hs1} M.-L. Hsieh, \emph{On the $\mu$-invariant of anticyclotomic $p$-adic L-functions for CM fields}, 
J. Reine Angew. Math. 688 (2014), 67–100. 
\bibitem{Hs3} M.-L. Hsieh, \emph{Special values of anticyclotomic Rankin-Selbeg L-functions}, 
 Doc. Math. 19 (2014), 709–767. 
\bibitem{JSW} D. Jetchev, C. Skinner and X. Wan, \emph{The Birch-Swinnerton-Dyer Formula For Elliptic Curves of Analytic Rank One and Main Conjectures}, 
in preparation. 
\bibitem{Ka} N. M. Katz, \emph{ $p$-adic L-functions for CM fields}, Invent. Math., $49(1978)$, no. $3$, 199-297. 
\bibitem{Ka1} N. M. Katz, \emph{Serre-Tate local moduli}, in Algebraic Surfaces (Orsay, 1976-78), Lecture Notes
in Math. 868, Springer-Verlag, New York, 1981, pp. 138-202.
\bibitem{LZZ} Y. Liu, S. Zhang and W. Zhang, \emph{On $p$-adic Waldspurger formula}, preprint, 2014,
available at http://www.math.mit.edu/$\sim$ liuyf/ .
\bibitem{M} B. Mazur, \emph{Modular curves and arithmetic}, Proceedings of the International Congress of Mathematicians, 
Vol. 1, 2 (Warsaw, 1983), 185–211, PWN, Warsaw, 1984.
\bibitem{N} J. Nekov\'{a}\v{r}, \emph{Kolyvagin's method for Chow groups of Kuga-Sato varieties}, 
Invent. Math. 107 (1992), no. 1, 99–125.
\bibitem{N1} J. Nekov\'{a}\v{r}, \emph{On the parity of ranks of Selmer groups. II}, 
C. R. Acad. Sci. Paris Sér. I Math. 332 (2001), no. 2, 99–104. 
\bibitem{N2} J. Nekov\'{a}\v{r}, \emph{Growth of Selmer groups of Hilbert modular forms over ring class fields}, 
 Ann. Sci. Éc. Norm. Supér. (4) 41 (2008), no. 6, 1003–1022. 
\bibitem{N3} J. Nekov\'{a}\v{r}, \emph{On the parity of ranks of Selmer groups. IV}, 
With an appendix by Jean-Pierre Wintenberger. Compos. Math. 145 (2009), no. 6, 1351–1359.
\bibitem{Pr} K. Prasanna, \emph{Integrality of a ratio of Petersson norms and level-lowering congruences}, Ann. of Math. (2) 163 (2006), no. 3, 901–967. 
\bibitem{R} K. Rubin, \emph{$p$-adic L-functions and rational points on elliptic curves with complex multiplication}, 
Invent. Math. 107 (1992), no. 2, 323–350.
\bibitem{Sc} A. Scholl, \emph{Motives for modular forms}, Invent. Math. 100 (1990), no. 2, 419-430.
\bibitem{Sh3} G. Shimura, \emph{Abelian varieties with complex multiplication and modular functions}, Princeton Mathematical Series, 46. Princeton University Press, Princeton, NJ, 1998.
\bibitem{V} V. Vatsal, \emph{Uniform distribution of Heegner points}, Invent. Math. 148, 1-48 (2002). 
\bibitem{V1} V. Vatsal, \emph{Special values of anticyclotomic L-functions}, Duke Math J., 116, 219-261 (2003).
\bibitem{V2} V. Vatsal, \emph{Special values of L-functions modulo $p$}, International Congress of Mathematicians. Vol. II, 501–514, Eur. Math. Soc., Zürich, 2006.
\bibitem{W} X. Wan, \emph{Heegner point Kolyvagin system and Iwasawa main conjecture}, preprint, 2014, 
available at http://www.math.columbia.edu/$\sim$ xw2295.
\bibitem{YZZ} X. Yuan, S. Zhang and W. Zhang, \emph{The Gross-Zagier formula on Shimura curves}, Annals of Mathematics Studies, vol 184. (2013) viii+272 pages. 
\bibitem{Z} S. Zhang, \emph{Heights of Heegner points on Shimura curves}, 
Ann. of Math. (2) 153 (2001), no. 1, 27–147.

\end{document}